\newtheorem{theo}{Theorem}
\newtheorem{prop}[theo]{Proposition}
\newtheorem{lemm}[theo]{Lemma}
\newtheorem{coro}[theo]{Corollary}
\newtheorem{defi}[theo]{Definition}
\newtheorem{rema}[theo]{Remark}
\newtheorem{prob}{Problem}
\newtheorem{exam}{Example}
\newtheorem{theoalpha}{Theorem}
\renewcommand{\le}{\leqslant}
\renewcommand{\leq}{\leqslant}
\renewcommand{\ge}{\geqslant}
\renewcommand{\geq}{\geqslant}
\newcommand{\vep}{\varepsilon}
\newcommand{\betabar}{\ensuremath{\bar{\beta}}}
\newcommand{\mbar}{\ensuremath{\bar{m}}}
\newcommand{\nbar}{\ensuremath{\bar{n}}}
\newcommand{\ubar}{\ensuremath{\bar{u}}}
\newcommand{\bH}{\ensuremath{\mathbb H}}
\newcommand{\bN}{\ensuremath{\mathbb N}}
\newcommand{\bM}{\ensuremath{\mathbb M}}
\newcommand{\bR}{\ensuremath{\mathbb R}}
\newcommand{\bZ}{\ensuremath{\mathbb Z}}
\newcommand{\sK}{\ensuremath{\mathsf K}}
\newcommand{\sL}{\ensuremath{\mathsf L}}
\newcommand{\cE}{\ensuremath{\mathcal E}}
\newcommand{\troot}{\emptyset}
\newcommand{\cdist}[1]{\mathsf{c}_{#1}}
\newcommand{\bin}[1]{\ensuremath{\mathsf{B}}_{#1}}
\newcommand{\dia}{\ensuremath{\mathsf{D}}}
\newcommand{\dX}{\ensuremath{\mathsf{d_X}}}
\newcommand{\dY}{\ensuremath{\mathsf{d_Y}}}
\newcommand{\sd}{\ensuremath{\mathsf{d}}}
\newcommand{\tree}{\ensuremath{\mathsf{T}}}
\newcommand{\lip}{\ensuremath{\mathrm{Lip}}}
\newcommand{\colip}{\ensuremath{\mathrm{coLip}}}
\newcommand{\codist}{\ensuremath{\mathrm{codist}}}
\newcommand{\ban}[1]{\mathcal{#1}}
\newcommand{\met}[1]{\mathsf{#1}}
\newcommand{\gra}[1]{\mathsf{#1}}
\newcommand{\banF}{\ban F}
\newcommand{\banX}{\ban X}
\newcommand{\banY}{\ban Y}
\newcommand{\metX}{\met X}
\newcommand{\metY}{\met Y}
\newcommand{\banXn}{(\ban X,\norm{\cdot})}
\newcommand{\metXd}{(\met X,\dX)}
\newcommand{\metYd}{(\met Y,\dY)}
\newcommand{\ie}{\textit{i.e.,}\ }
\newcommand{\eqd}{\stackrel{\mathrm{def}}{=}}
\newcommand{\Ex}{\mathbb{E}}
\newcommand{\co}{\mathrm{c}_0}
\newcommand{\xbl}{\ensuremath{\left(\sum_{n=1}^\infty\ell_{\infty}^n\right)_2}}
\newcommand{\wtree}[2]{[{#1}]^{{#2}}}
\newcommand{\kin}{\!\in\!}
\newcommand{\cube}[1]{\{-1,1\}^{#1}}
\newcommand{\abs}[1]{\lvert #1\rvert}
\newcommand{\norm}[1]{\| #1\|}
\newcommand{\bnorm}[1]{\Big\| #1\Big\|}
\renewcommand{\bH}{\mathbb{H}}
\begin{document}

\title[Umbel convexity]{Umbel convexity and the geometry of trees}

\author{F.~Baudier}
\address{Texas A\&M University, College Station, TX 77843, USA}
\email{florent@tamu.edu}

\author{C.~Gartland}
\address{Texas A\&M University, College Station, TX 77843, USA}
\curraddr{University of California San Diego, La Jolla, CA 92093, USA}
\email{cgartland@ucsd.edu}
	
\thanks{F. Baudier was partially supported by the National Science Foundation under Grant Number DMS-1800322 and DMS-2055604. C. Gartland was partially supported by the National Science Foundation under Grant Number DMS-2342644}
	
\keywords{umbel convexity, infrasup-fork convexity, Markov convexity, fork convexity, infrasup-fork convexity, Rolewicz's property $(\beta)$, trees, bi-Lipschitz embeddings, nonlinear quotients, compression rate, distortion, non-positive curvature}

\subjclass[2010]{46B85, 68R12, 46B20, 51F30, 05C63, 46B99}

\begin{abstract}
For every $p\in(0,\infty)$, a new metric invariant called umbel $p$-convexity is introduced. The asymptotic notion of umbel convexity captures the geometry of countably branching trees, much in the same way as Markov convexity, the local invariant which inspired it, captures the geometry of bounded degree trees. Umbel convexity is used to provide a ``Poincar\'e-type" metric characterization of the class of Banach spaces that admit an equivalent norm with Rolewicz's property $(\beta)$. We explain how a relaxation of umbel $p$-convexity, called infrasup-umbel $p$-convexity, plays a role in obtaining compression rate bounds for coarse embeddings of countably branching trees. Local analogues of these invariants - fork $p$-convexity and infrasup-fork $p$-convexity - are introduced, and their relationship to Markov $p$-convexity and relaxations of the $p$-fork inequality is discussed. The metric invariants are estimated for a large class of Heisenberg groups, and in particular a parallelogram $p$-convexity inequality is proved for Heisenberg groups over $p$-uniformly convex Banach spaces. Finally, a new characterization of non-negative curvature is given.  
\end{abstract}

\maketitle

\setcounter{tocdepth}{3}
\tableofcontents

\section{Introduction}

After the discovery by Ribe \cite{Ribe76} of a striking rigidity phenomenon regarding \emph{local properties} of Banach spaces, the search for metric characterizations of local properties of Banach spaces has been a main research avenue for what would become known as the Ribe program. The Ribe program has grown into an extensive and tentacular research program with far reaching  ramifications, in particular in theoretical computer science and geometric group theory. We refer the interested reader to \cite{Ball13} and \cite{Naor12} for more information about this program. 

The foundational result of the Ribe program is a 1986 theorem of Bourgain. 
\begin{theo}\cite{Bourgain86}
A Banach space $\banY$ is super-reflexive if and only if $\sup_{k \in \bN} \cdist{\banY} (\bin{k})=\infty$.
\end{theo}

In Bourgain's metric characterization of super-reflexivity, $\{\bin{k}\}_{k\ge 1}$ is the sequence of binary trees, and the parameter $\cdist{\metY}(\metX)$ denotes the $\metY$-distortion of $\metX$ for two metric spaces $\metYd$ and $\metXd$, \ie the least constant $D$ such that there exist $s>0$ and a map $f\colon \metX \to \metY$ satisfying for all $x,y\in \metX$ 
\begin{equation*}
s\cdot \dX(x,y)\le \dY(f(x),f(y))\le sD\cdot \dX(x,y).
\end{equation*} 
An important renorming result of Enflo \cite{Enflo72} states that super-reflexivity can be characterized in terms of uniformly smooth or uniformly convex renormings. Moreover thanks to Asplund's averaging technique \cite{Asplund67}, we can equivalently consider in Bourgain's metric characterization the class of Banach spaces that admit an equivalent norm that is uniformly convex and uniformly smooth. From this perspective, an asymptotic analogue of Bourgain's metric characterization was obtained by Baudier, Kalton and Lancien in 2009.

\begin{theo}\label{thm:BKL}\cite{BKL10}
Let $\banY$ be a reflexive Banach space. Then,

$\banY$ admits an equivalent norm that is asymptotically uniformly convex and asymptotically uniformly smooth if and only if $\sup_{k \in \bN} \cdist{\banY} (\tree_k^{\omega})=\infty.$
\end{theo}

The tree $\tree_k^{\omega}$ in Theorem \ref{thm:BKL} is the countably branching version of the binary tree $\bin{k}$. The discovery of Theorem \ref{thm:BKL} launched the quest for metric characterizations of \emph{asymptotic properties} of Banach spaces. It is worth pointing out that Ribe's rigidity theorem \cite{Ribe76} provides a theoretical motivation to metrically characterize local properties of Banach spaces, but no such rigidity result is known in the asymptotic setting. Nevertheless, the asymptotic declination of Ribe program has seen some steady progress in the past decade (see for instance \cite{LimaLova12}, \cite{DKLR14}, \cite{DKR16}, \cite{BaudierZhang16}, \cite{Baudier-et-all17}, \cite{CauseyDilworth17}, \cite{DKLR17}, \cite{BLMS_IJM}, \cite{Zhang22}) with some interesting applications to coarse geometry such as in \cite{BLS_JAMS} and \cite{BLMS_JIMJ}.

Pisier's influential quantitative refinement \cite{Pisier75} of Enflo's renorming states that a super-reflexive Banach space $\banX$ admits an equivalent norm whose modulus of uniform convexity is of power type $p$ for some $p\ge 2$, or equivalently as shown in \cite{BCL94}, satisfies the following inequality for all $x,y\in \banX$ and some constant $K\ge 1$.
\begin{equation}\label{eq:p-convex}
\frac{\norm{x+y}^p+\norm{x-y}^p}{2} \ge \norm{x}^p+\frac{1}{K^p}\norm{y}^p.
\end{equation}
A Banach space whose norm satisfies \eqref{eq:p-convex} is said to be \emph{$p$-uniformly convex}. The following quantification of Bourgain's metric characterization was obtained by Mendel and Naor \cite{MendelNaor13} building upon previous work of Lee, Naor, and Peres \cite{LNP06,LNP09}.

\begin{theo}\label{thm:MN-LNP}\cite{MendelNaor13, LNP09}
A Banach space $\banX$ admits an equivalent norm that is $p$-uniformly convex if and only if $\banX$ is Markov $p$-convex.  
\end{theo}

On the metric side of the equivalence in Theorem \ref{thm:MN-LNP} is a complex inequality that captures the geometry of trees with bounded degree. According to \cite{LNP09} and given $p>0$, a metric space $\metXd$ is \emph{Markov $p$-convex} if there exists a constant $\Pi>0$ such that for every Markov chain $\{W_t\}_{t \in \bZ}$ on a state space $\Omega$ and every $f\colon \Omega \to \metX$, 
\begin{equation}\label{eq:p-Markov}
\sum_{s=0}^\infty \sum_{t \in \bZ} \frac{\Ex \big[\dX\big(f(W_t),f(\tilde{W}_t(t-2^{s})) \big)^p \big]}{2^{sp}}\le \Pi^p \sum_{t \in \bZ}\Ex \big[\dX \big( f(W_t),f(W_{t-1}) \big)^p \big],
\end{equation}
where given an integer $\tau$, $\{\tilde{W}_t(\tau)\}_{t \in \bZ}$ is the stochastic process which equals $W_t$ for time $t\le \tau$ and evolves independently, with respect to the same transition probabilities, for time $t>\tau$. The smallest constant $\Pi$ such that \eqref{eq:p-Markov} holds will be denoted by $\Pi_p^M(\metX)$.

Markov $p$-convexity is easily seen to be a bi-Lipschitz invariant, and quantitatively $\Pi_p^M(\metX)\le \cdist{\metY}(\metX)\Pi_p^M(\metY)$. The discovery of the Markov convexity inequality was partially inspired by the non-embeddability argument in Bourgain's characterization, and it thus naturally provides restrictions on the faithful embeddability of binary trees. Considering the regular random walk on the binary tree $\bin{2^k}$, it is easy to check that $\Pi^M_p(\bin{2^k}) \ge 2^{1-2/p}k^{1/p}$ and hence any bi-Lipschitz embedding of $\bin{k}$ into a Markov $p$-convex metric space incurs distortion at least $\Omega \big( (\log k)^{1/p} \big)$. This lower bound extends to the purely metric setting the lower bound obtained for $p$-uniformly convex spaces in \cite{Bourgain86}. Note also that Markov $p$-convexity is stable under taking $\ell_p$-sums of metric spaces and is preserved under Lipschitz quotient mappings \cite[Prop. 4.1]{MendelNaor13}.

Recall that a map $f \colon \metXd \to \metYd $ is a coarse embedding if there are non-decreasing maps $\rho,\omega\colon [0,\infty)\to [0,\infty)$ and $\lim_{t\to \infty} \rho(t)=\infty$ and for all $x,y\in \metX$,
\begin{equation*}
\rho(\dX(x,y))\le \dY(f(x) , f(y))\le \omega(\dX(x,y)).
\end{equation*}
The function $\rho$ (resp. $\omega$) is usually called the compression (resp. expansion) control function. We talk about equi-coarse embedding of a sequence of metric spaces if there is a sequence of coarse embeddings that are controlled uniformly by given compression and expansion functions. For graphs the expansion control function can always be assumed to be linear and the compression rate is the best compression control function that can be achieved.
In his investigation of the compression rate of coarse embeddings of groups, Tessera established the following restriction on the compression rate for equi-coarse embeddings of binary trees.

\begin{theo}\label{thm:Tessera}\cite{Tessera08}
The compression rate of any equi-coarse embedding of $\{\bin{k}\}_{k\ge 1}$ into a $p$-uniformly convex Banach space satisfies  $$\int_{1}^\infty \Big(\frac{\rho(t)}{t}\Big)^p\frac{dt}{t}<\infty.$$
\end{theo}

The proof of Theorem \ref{thm:Tessera} is another variation of Bourgain's non-embeddability argument and relies on the fact that for any $p$-uniformly convex Banach space $\banX$ there exists a constant $C>0$ such that for all $k \ge 1$ and $f\colon \bin{2^k}\to \banX$, the following refinement of an inequality implicit in \cite{Bourgain86} holds
\begin{equation}\label{eq:q-tess}
\sum_{s=0}^{k-1} \min_{2^s< \ell \le 2^k-2^s} \Ex_{\vep \in \cube{\ell}}\Ex_{\delta\in \cube{2^s}}\Ex_{\delta' \in \cube{2^s} }\frac{\norm{f(\vep,\delta)-f(\vep,\delta')}_\banX^p}{2^{sp}}\le C^p \lip(f)^p.
\end{equation} 
Here, $\lip(f)$ is the Lipschitz constant of $f$ and $\cube{h}$ is the set of vertices of $\bin{2^k}$ whose height is exactly $h$, or in other words, the vertex set of the binary tree is $\bin{2^k}:=\cup_{h=0}^{2^k} \cube{h}$ and the edge set consists of pairs of the form $\{\vep,(\vep,\delta)\}$ where $\vep \in \cube{h}$ for some $0 \leq h < 2^k$ and $\delta \in \cube{}$.
In the Banach space setting and thanks to Theorem \ref{thm:MN-LNP}, Tessera's inequality \eqref{eq:q-tess} is implied by Markov $p$-convexity. Even though not readily apparent, inequality \eqref{eq:q-tess} also follows from Markov $p$-convexity in the purely metric setting, and in turn the compression rate for the binary trees is also valid when the embedding takes values into a Markov $p$-convex metric space. We suspect this observation is known to experts and it is best seen when considering a deterministic inequality implied by Markov convexity. This fact will be properly justified in Section \ref{sec:relaxed-Markov} (cf. Remark \ref{rem:Tessera}) where we study certain relaxations of the Markov convexity inequality.

In this article we introduce new metric invariants, which are inspired by Markov convexity and inequality \eqref{eq:q-tess}, and that are crucial in resolving some problems regarding the asymptotic geometry of Banach spaces. These new inequalities share many features with their local cousins and capture the geometry of countably branching trees. The difficulty in obtaining non-trivial inequalities for countably branching trees lies in the fact that it is not clear how to make sense of the various averages over vertices when there are infinitely many of them. The strongest asymptotic metric invariant that we introduce in this article is the notion of \emph{umbel convexity}. In the definition below, $\wtree{\bN}{\le h}$ (resp. $\wtree{\bN}{h}$) denotes the set of all subsets \footnote{We will slightly abuse notation and write an element $\nbar \in \wtree{\bN}{\le h}$ as $\nbar=(n_1,n_2,\dots, n_\ell)$ where $n_1<n_2<\dots<n_\ell$ and write concisely $f(\nbar,\bar{\delta})$ instead of the more formal expression $f((n_1,\dots,n_\ell,\delta_1,\dots,\delta_{\ell'}))$ whenever the last expression makes sense.} of size at most $h$ (resp. exactly $h$) which is commonly used to code the vertex set of $\tree^\omega_{h}$, the countably branching tree of height $h$. Recall that two vertices $\mbar=(m_1,m_2,\dots, m_i)$ and $\nbar=(n_1,n_2,\dots, n_j)$ in $\tree^\omega_{h}$ belong to an edge if and only if $j=i+1$ and $m_1=n_1$, $m_2=n_2, \dots, m_i=n_i$. 
\begin{defi} Let $p\in(0,\infty)$. A metric space $\metXd$ is \emph{umbel $p$-convex} if there exists a constant $\Pi>0$ such that for all $k\ge 1$ and all $f\colon \wtree{\bN}{\le 2^k}\to \met X$,
\begin{align}\label{eq:umbel-p-convex}
\nonumber \sum_{s=1}^{k-1}\frac{1}{2^{k-1-s}}\sum_{t=1}^{2^{k-1-s}}\inf_{\nbar\in\wtree{\bN}{t2^{s+1}-2^s}}\inf_{\stackrel{\bar{\delta}\in\wtree{\bN}{2^s}\colon}{(\nbar,\bar{\delta})\in \wtree{\bN}{\le 2^k}}}\liminf_{j\to\infty}\inf_{\stackrel{\bar{\eta}\in\wtree{\bN}{2^s-1}\colon}{(\nbar,j,\bar{\eta})\in \wtree{\bN}{\le 2^k}}}\frac{\dX \big( f(\nbar,\bar{\delta}),f(\nbar,j,\bar{\eta}) \big)^p}{2^{sp}}\\
\le \Pi^p\frac{1}{2^{k}}\sum_{\ell=1}^{2^{k}}\sup_{\nbar\in \wtree{\bN}{\ell}}\dX \big( f(n_1,\dots,n_{\ell-1}),f(n_1,\dots,n_{\ell}) \big)^p.
\end{align}
The smallest constant $\Pi$ such that \eqref{eq:umbel-p-convex} holds for all $k \ge 1$ and all maps $f\colon \wtree{\bN}{\le 2^k}\to \met X$ will be denoted by $\Pi^u_p(\metX)$ and called the umbel $p$-convexity constant of $\metX$.
\end{defi}

Umbel convexity behaves in many respects as Markov convexity does, albeit with some significant and at times unavoidable discrepancies. This will be explained at length throughout the sections. Our approach to obtain the umbel convexity inequality is reminiscent of how Markov convexity can be derived from a certain $4$-point inequality. The terminology ``umbel convexity'' reflects the fact that the umbel $p$-convexity inequality follows from a certain inequality for the complete bipartite graph $\sK_{1,\omega}$ which we view pictorially as an umbel.  

\begin{figure}[H]
\label{fig:umbel}
\caption{$\sK_{1,\omega}$ in the umbel position}
\vskip 0.2cm
\hskip 0cm\includegraphics[scale=.25]{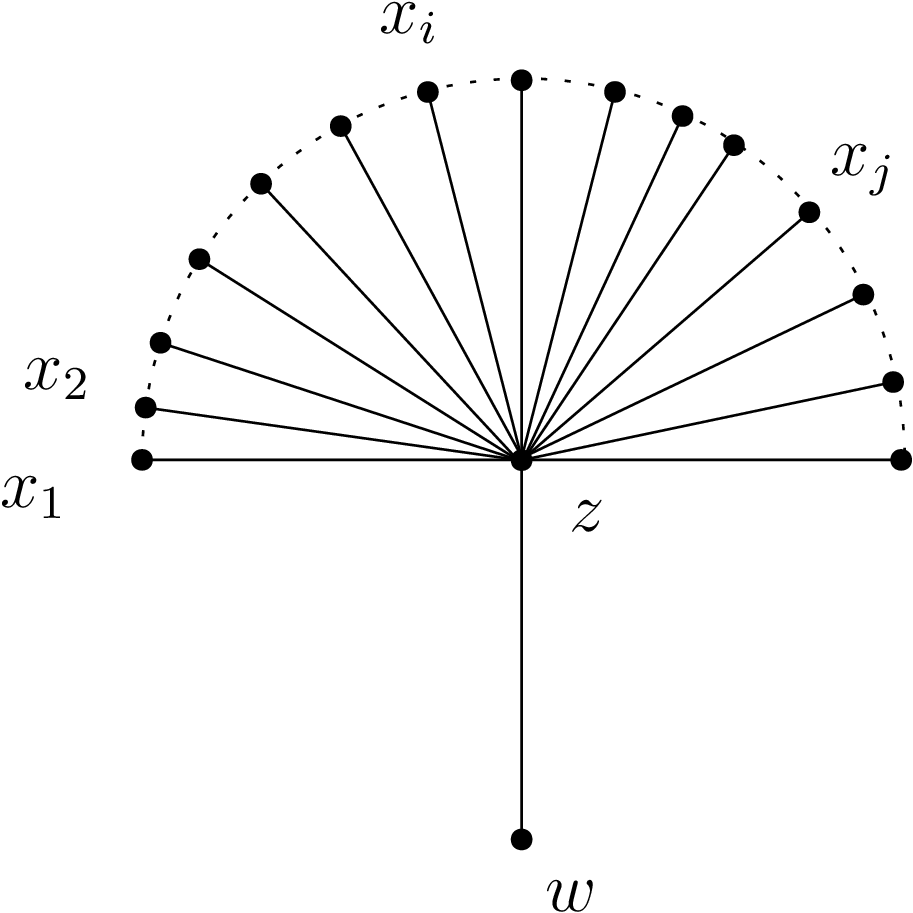}
\end{figure}

Umbel convexity plays a central role in the problem of characterizing metrically the class of Banach spaces admitting an equivalent norm with property $(\beta)$. The definition below, due to Kutzarova \cite{Kutzarova91}, is equivalent to Rolewicz's original definition \cite{Rolewicz87}. A Banach space $\banXn$ has Rolewicz's property $(\beta)$ if for all $t>0$ there exists $\bar{\beta}(t)>0$ such that for all $z\in B_\banX$ and $\{x_n\}_{n\in \bN}\subseteq B_\banX$ with $\inf_{i\neq j}\norm{x_i-x_j}\ge t$, there exists $i_0\in\bN$ so that
\begin{equation*}\label{eq:beta}
\bnorm{\frac{z-x_{i_0}}{2}}\le 1-\bar{\beta}(t).
\end{equation*} 
Moreover, $\banX$ is said to have property $(\beta)$ with power type $p>0$ and constant $c>0$ (shortened to property $(\beta_p)$) if $\bar{\beta}(t)\ge \frac{t^p}{c}$.

Property $(\beta)$ is an asymptotic generalization of uniform convexity, but it is much more than that. We denote by $\textrm{cof}(\banX)$ the set of all the finite co-dimensional subspaces of $\banX$. Recall that a Banach space $\ban X$ is \emph{asymptotically uniformly convex} if $\bar{\delta}_\banX(t)>0$ for all $t>0$, and \emph{asymptotically uniformly smooth} if $\lim_{t\to 0}\bar{\rho}_\banX(t)/t =0$, where 
\begin{equation}
\bar{\delta}_\banX(t) \eqd \inf_{\stackrel{x\in \banX}{\norm{x}=1}}\sup_{Y\in\textrm{cof}(\ban X)}\inf_{\stackrel{y\in Y}{\norm{y}=1}} \norm{x+ty}-1,
\end{equation}
and
\begin{equation}
\bar{\rho}_\banX(t) \eqd \sup_{\stackrel{x\in \banX}{\norm{x}=1}}\inf_{Y\in\textrm{cof}(\ban X)}\sup_{\stackrel{y\in Y}{\norm{y}=1}} \norm{x+ty}-1.
\end{equation}
  The following theorem follows from several important renorming results (in particular from \cite{Kutzarova90} and \cite{KOS99}) and we refer to \cite{DKLR17} for a thorough discussion.

\begin{theo}
\label{thm:renorming}
The following classes of Banach spaces coincide:
\begin{enumerate}[(i)]
\item The class $\langle (\beta) \rangle$ of Banach spaces admitting an equivalent norm with Rolewicz's property $(\beta)$.
\item The class $\langle (\beta_p) \rangle$ of Banach spaces admitting an equivalent norm with property $(\beta_p)$ for some $p\in(1,\infty)$.
\item The class of reflexive Banach spaces admitting an equivalent norm that is asymptotically uniformly convex and asymptotically uniformly smooth.
\end{enumerate}
\end{theo}

We want to emphasize a subtle point here. Theorem \ref{thm:BKL}, in combination with Theorem \ref{thm:renorming} $(iii)$, provides a metric characterization of the class $\langle (\beta) \rangle$ \emph{within} the class of reflexive Banach spaces. However, Perreau \cite{Perreau20} recently showed that $\sup_{k\ge 1}\cdist{\ban J}(\tree_k^\omega)=\infty$ where $\ban J$ is James (non-reflexive) space \cite{James51}. Therefore, the condition $\sup_{k\ge 1}\cdist{\banX}(\tree_k^\omega)=\infty$, does not necessarily force $\banX$ to be reflexive, and consequently it does not characterize the class $\langle (\beta) \rangle$. This reflexivity issue, which does not arise in the local setting, is resolved with the help of umbel convexity. 

Banach spaces with property $(\beta_p)$ are the prototypical spaces that are umbel $p$-convex (see Corollary \ref{cor:betap->umbelp} in Section \ref{sec:umbel}). The following theorem is a metric characterization of the class $\langle (\beta) \rangle$ in terms of the existence of a certain Poincar\'e-type inequality.

\begin{theoalpha}
\label{thmA}
Let $\banX$ be a Banach space. Then,
$\banX$ admits an equivalent norm with property $(\beta)$ if and only if $\banX$ is umbel $p$-convex for some $p\in(1,\infty)$.
\end{theoalpha}

While writing this article, we learned from Sheng Zhang \cite{Zhang22} that he had discovered independently a metric  characterization of the class $\langle (\beta) \rangle$ in terms of a submetric test-space in the sense of Ostrovskii \cite{Ostrovskii14b}. A similar submetric test-space characterization can be extracted with some care from the work of Dilworth, Kutzarova, and Randrianarivony in \cite{DKR16} and is also a direct consequence of our work (see Corollary \ref{cor:submetric} in Section \ref{sec:umbel}).

The delicate question of renorming a Banach space that is umbel $p$-convex will be discussed in Section \ref{sec:conclusion}. Let us just mention here that there exists an example of a Banach space constructed by Kalton in \cite{Kalton13} that is umbel $p$-convex and does not admit an equivalent norm with property $(\beta_p)$, but for every $\vep>0$ admits an equivalent norm with property $(\beta_{p+\vep})$.

The question of estimating from above compression rates for equi-coarse embeddings of the countably branching trees has remained open for a while, even for simple target spaces such as $\xbl$ for which the geometry of binary trees does not provide any obstruction. The techniques in \cite{BKL10} and \cite{BaudierZhang16} provide quantitative information about the faithful embeddability of the countably branching trees that are inherently of a bi-Lipschitz nature, and do not provide any estimates on compression rates of coarse embeddings. Umbel convexity can be used to resolve this problem. In fact, a significant relaxation of the umbel convexity inequality is sufficient for this purpose. 

\begin{defi} 
\label{def:umbelcotypep}
Let $p\in(0,\infty)$. A metric space $\metXd$ is said to be \emph{infrasup-umbel $p$-convex} if there exists a constant $C>0$ such that for all $k\ge 1$ and all $f\colon \tree^\omega_{2^k}\to \met X$,
\begin{align*}
\sum_{s=1}^{k-1}\inf_{\nbar\in\wtree{\bN}{\le 2^{k}-2^s}}\inf_{i \neq j \in \bN} \inf_{\stackrel{\mbar,\mbar' \in \wtree{\bN}{2^s-1} \colon }{(\nbar,i,\mbar),(\nbar,j,\mbar')\in \wtree{\bN}{\le 2^k}}}\frac{\sd(f(\nbar,i,\mbar),f(\nbar,j,\mbar'))^p}{2^{sp}} \le C^p \lip(f)^p.
\end{align*}
\end{defi}

It is plain that umbel $p$-convexity implies infrasup-umbel $p$-convexity. Compression rate estimates can be obtained using the notion of infrasup-umbel convexity in the same way Tessera derived Theorem \ref{thm:Tessera} from inequality \eqref{eq:q-tess}. 

\begin{theoalpha}
\label{thm:B}
Let $p\in(0,\infty)$. The compression rate of any equi-coarse embedding of $\{\tree^\omega_k\}_{k\ge 1}$ into a metric space that is infrasup-umbel $p$-convex satisfies 
\[\int_{1}^\infty \Big(\frac{\rho(t)}{t}\Big)^p\frac{dt}{t}<\infty.\]
\end{theoalpha}

The true gist of Theorem \ref{thm:B} lies in the large class of spaces it covers, since the infrasup-umbel $p$-convexity inequality follows from a significant relaxation of the inequality that was needed to prove umbel $p$-convexity. Moreover, the geometry of countably branching trees provides embeddability obstructions for spaces such as infinite-dimensional hyperbolic spaces (see Corollary \ref{cor:hyperbolic}) which cannot be achieved by merely resorting to the geometry of locally finite trees. More examples supporting these claims can be found in Section \ref{sec:examples}. A particularly interesting class of examples are Heisenberg groups over certain infinite-dimensional Banach spaces. We refer to Section \ref{sec:examples} for the definition of the Heisenberg group $(\bH(\omega_\banX),\sd_{cc})$, where $\sd_{cc}$ denotes the Carnot-Carath\'eodory metric, and the important fact that $(\bH(\omega_\banX),\sd_{cc})$ does not embed bi-Lipschitzly into a Banach space with property $(\beta_p)$.

\begin{theoalpha}
\label{thm:C}
For every non-null, antisymmetric, and bounded bilinear form $\omega_\banX$ on $\banX$ and every $p\ge 2$, the infinite-dimensional Heisenberg group $(\bH(\omega_\banX),\sd_{cc})$ is infrasup-umbel $p$-convex whenever $\banX$ has property $(\beta_p)$.\end{theoalpha}

It is natural to ask if a stronger conclusion can be achieved in Theorem \ref{thm:C}, namely if infrasup-umbel $p$-convexity can be upgraded to umbel $p$-convexity. We do not know if this stronger conclusion holds, and we discuss the issue further following Problem~\ref{prob:Heisenbergumbel}.

Theorem \ref{thm:C} is in stark contrast with the situation in the local theory, as it was shown by S. Li in \cite{Li16} that the Heisenberg group $(\bH(\omega_{\bR^2}),\sd_{cc})$ is not Markov $p$-convex for $p < 4$, where $\omega_{\bR^2}$ is the scalar cross product on $\bR^2$. The reason that we can achieve better convexity properties in the asymptotic setting is, loosely speaking, due to the fact that the twisting factor $\omega_\banX(x_i,x_j)$ in the last coordinate always tends to 0 along a subsequence (the importance of this fact is apparent in the proof of Theorem~\ref{thm:Heisenberg}, from which Theorem~\ref{thm:C} follows). Therefore, as far as infrasup-umbel convexity is concerned, the Heisenberg group $\bH(\omega_\banX)$ behaves the same as the abelian group $\banX \oplus \bR$ (where the second factor is equipped with a snowflaked metric $\sqrt{|\cdot|}$), and thus one would expect it to be infrasup-umbel $p$-convex whenever $\banX$ has property $(\beta_p)$. Of course, for fixed vectors $x,y$, the twisting factor $\omega_\banX(x,y)$ in the last coordinate cannot be ignored, resulting in a more complex local geometry, as evidenced by the aforementioned result of Li. In fact, as an application of his methods, it was also shown in \cite{Li16} that the ball of radius $n$ in the integer lattice of $\bH(\omega_{\bR^2})$ has $\ell_2$-distortion at least a constant multiple of\footnote{The sharp bound $\Omega((\log n)^{\frac{1}{2}})$ was proved by Lafforgue-Naor in \cite{LafforgueNaor14}.}  $(\log n)^{\frac{1}{4}-o(1)}$. Our Theorem~\ref{thm:C} shows that an analogous argument with infrasup-umbel convexity in place of Markov convexity cannot be used to derive a nontrivial lower bound for the distortion of the integer lattice of $\bH(\omega_{\ell_2})$ (where $\omega_{\ell_2}$ is the form on $\ell_2 \oplus \ell_2$ given by $\omega_{\ell_2}((x,y),(x',y')) := \frac{1}{2}\langle x,y' \rangle - \frac{1}{2}\langle x',y \rangle$) into a Banach space with property $(\beta_2)$ (such as $\ell_2$). As far as we can tell, it is plausible that the integer lattice of $\bH(\omega_{\ell_2})$ does admit a bi-Lipschitz embedding into some Banach space with property $(\beta_2)$. On the other hand, Theorem~\ref{thm:C} gives sharp distortion bounds of countably branching trees into $\bH(\omega_{\ell_2})$, while in \cite{Li16} it is shown that Markov convexity does not give sharp distortion bounds of the binary trees into $\bH(\omega_{\ell_2})$. Later, we will introduce a local analogue of infrasup-umbel $p$-convexity, called infrasup-fork $p$-convexity. If we had a local analogue of Theorem~\ref{thm:C} stating that $\bH(\omega_{\ell_2})$ is infrasup-fork $2$-convex, then this would recover the sharp distortion bounds of the binary trees into $\bH(\omega_{\ell_2})$. However, we do not know if this is true (see Problem~\ref{pb:heisenberg-is2fork} and the discussion surrounding it).

Infrasup-umbel convexity can also be used to provide alternate and unified proofs of generalizations of a number of results that can be found in \cite{LimaLova12}, \cite{DKLR14}, \cite{DKR16}, and \cite{BaudierZhang16}. These applications can mostly be found in Section \ref{sec:distortion} and \ref{sec:quotients} where a quantitative analysis of embeddings of countably branching trees and the stability of umbel convexity and infrasup umbel convexity under nonlinear quotients are carried out.

As already alluded to, the Markov $p$-convexity inequality is elegantly shown in \cite{MendelNaor13} to follow from a certain iteration of the following inequality:
\begin{equation}\label{eq:qfork}
\frac{2^{-p}\dX(w,x)^p}{2}+\frac{2^{-p}\dX(w,y)^p}{2}+\frac{\dX(x,y)^p}{(4K)^p}\le \frac{1}{2}\dX(z,w)^p + \frac{1}{4}\dX(z,x)^p + \frac{1}{4}\dX(z,y)^p.
\end{equation}

A metric space $\metXd$ is said to satisfy the \emph{$p$-fork inequality with constant $K>0$} if \eqref{eq:qfork} holds for all $w,x,y,z\in \met X$.

In Section \ref{sec:Heisenberg-parallelogram}, we prove a parallelogram $2p$-convexity inequality for Heisenberg groups over $p$-uniformly convex Banach spaces. This useful inequality - first investigated for finite-dimensional Carnot groups by the second author (\cite[Lemma 4.17]{GartlandCarnot}) - is shown to imply the $2p$-fork inequality \eqref{eq:qfork} and $2p$-short diagonals inequality \eqref{eq:short-q}.

In light of our work on metric invariants related to countably branching trees, we study in Section \ref{sec:relaxed-Markov} certain relaxations of the $p$-fork inequality \eqref{eq:qfork} and the related full-blown deterministic metric invariants that can be derived from those. As previously mentioned, we introduce the metric invariant infrasup-fork $p$-convexity - a natural local analogue to infrasup-umbel $p$-convexity - that is sufficient to derive the conclusion of Theorem \ref{thm:Tessera}. The advantage to work with this invariant, which is a significant relaxation of Tessera's inequality \eqref{eq:q-tess}, is that it covers a large class of examples.

Finally, in Section \ref{sec:non-negative}, we borrow an idea from Lebedeva and Petrunin \cite{LebedevaPetrunin10} to show that the $2$-fork inequality with constant $K=1$ implies non-positive curvature. Interestingly, it was shown by Austin and Naor in \cite{AustinNaor} that non-negative curvature implies the $2$-fork inequality with constant $K=1$. The following characterization of non-negative curvature follows by combining these two observations.

\begin{theoalpha}
\label{thm:D}
Let $\metXd$ be a geodesic metric space. Then $\met X$ has non-negative curvature if and only if $\met X$ satisfies the $2$-fork inequality with constant $K=1$.
\end{theoalpha}

For the convenience of the reader, we also include in Appendix~\ref{app:table} a table summarizing the main inequalities introduced or recalled in the paper. \\

\textbf{Acknowledgments.}
We would like to thank Alexandros Eskenazis, Manor Mendel, and Assaf Naor for sharing with us their forthcoming work \cite{EMN}, and Manor Mendel for his generous and enlightening feedback on a first draft of this work that in particular led to Proposition \ref{prop:saturation}.

\section{Property $(\beta)$ with power type $p$ implies umbel $p$-convexity}
\label{sec:umbel}

The main goal of this section is to provide a proof of Theorem \ref{thmA}. This will be done via several steps interesting in their own right. First we prove some preparatory lemmas that will be used to derive a homogeneous inequality that is valid in any Banach space with property $(\beta_p)$. The first lemma is essentially technical.

\begin{lemm} 
\label{lem:1}
Let $\banXn$ be a Banach space. For all $\delta,\vep>0$, $v,w \in \banX$, and $V,W \geq \vep$ with $\|v\| \leq V$, $\|w\| \leq W$, and $\frac{1}{2}V + \frac{1}{2}W \le 1$, if $\bnorm{ \frac{v}{2V} +\frac{w}{2W}} \le 1-\delta$, then $\norm{\frac{1}{2}v+\frac{1}{2}w}\le 1-\vep\delta$.
\end{lemm}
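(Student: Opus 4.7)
The plan is to exploit the simple identity that rewrites $\frac{1}{2}v + \frac{1}{2}w$ as a specific combination of the two unit-ball vectors $u_1 := v/(2V)$ and $u_2 := w/(2W)$, whose midpoint norm we already control by hypothesis.

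\textbf{Step 1: Rescale.} Set $u_1 := \frac{v}{2V}$ and $u_2 := \frac{w}{2W}$. Then $\|u_1\|\le \tfrac{1}{2}$, $\|u_2\|\le \tfrac{1}{2}$, and by hypothesis $\|u_1+u_2\|\le 1-\delta$. A direct computation gives
\[
\tfrac{1}{2}v + \tfrac{1}{2}w \;=\; V u_1 + W u_2,
\]
so the task reduces to bounding $\|V u_1 + W u_2\|$ by $1-\vep\delta$.

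\textbf{Step 2: Split off the smaller weight.} By symmetry we may assume $V\le W$. The key trick is to write
\[
V u_1 + W u_2 \;=\; V(u_1+u_2) + (W-V)\,u_2,
\]
which expresses $Vu_1+Wu_2$ as the controlled midpoint-quantity $u_1+u_2$ (scaled by $V$) plus a ``correction'' $u_2$ of size at most $1/2$. Applying the triangle inequality, together with $\|u_1+u_2\|\le 1-\delta$ and $\|u_2\|\le 1/2$, yields
\[
\|V u_1 + W u_2\| \;\le\; V(1-\delta) + \tfrac{W-V}{2} \;=\; \tfrac{V+W}{2} - V\delta.
\]

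\textbf{Step 3: Conclude.} Since $\tfrac{V+W}{2}\le 1$ by hypothesis and $V\ge \vep$, we obtain $\|V u_1 + W u_2\|\le 1 - V\delta \le 1 - \vep\delta$, as desired.

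There is no real obstacle here; the lemma is essentially a bookkeeping exercise. The only decision to make is choosing the decomposition in Step~2, and the assumption $V,W\ge \vep$ makes it clear that one wants the smaller of the two weights to factor with the good term $u_1+u_2$, so that the $\vep$ in the conclusion arises from bounding $V\ge\vep$ (or $W\ge\vep$ after swapping).
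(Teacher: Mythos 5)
Your proof is correct and is essentially the same decomposition as the paper's: after the paper rescales so that $V+W=2$, its convex combination $(1-\tfrac{1}{W})w + \tfrac{1}{W}\bigl(\tfrac{W}{2}v+\tfrac{V}{2}w\bigr)$ is algebraically the same as your $(W-V)u_2 + V(u_1+u_2)$. Your version simply skips the preliminary normalization, which makes the bookkeeping a little cleaner, but the key idea (split off the smaller weight so the controlled quantity $u_1+u_2$ carries a factor of at least $\vep$) is identical.
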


\begin{proof}
Rescaling if needed, we may assume that $V+W=2$ and without loss of generality that $W \ge 1$. By assumption we have $\bnorm{\frac{v}{2V}+\frac{w}{2W}} \le 1-\delta$. Multiplying each side by $VW$ yields $\bnorm{\frac{W}{2}v+\frac{V}{2}w} \le (1-\delta) VW$. Then we have
\begin{align*}
\bnorm{\frac{v}{2}+\frac{w}{2}} & =  \bnorm{\Big(1-\frac{1}{W}\Big)w+\frac{1}{W}\Big(\frac{W}{2}v+\frac{V}{2}w\Big)} \\
							&  \le  \left(1-\frac{1}{W}\right)W + \frac{1}{W}\bnorm{\frac{W}{2}v+\frac{V}{2}w}\\
							& \le W-1+(1-\delta)V \\
							& = (V + W)-1-V\delta \\
							& \leq 1-\vep\delta.
\end{align*}
\end{proof}

The second lemma is a simple, but crucial, refinement of property $(\beta_p)$.

\begin{lemm}
\label{lem:betterbeta}
If $\banXn$ has property $(\beta_p)$ with $p>0$ and constant $c>0$ then for all $x\in B_\banX$ and $\{z_n\}_{n\in \bN}\subseteq B_\banX$, 
\begin{equation}\label{eq:betterbeta}
\inf_{n\in \bN} \bnorm{\frac{x-z_n}{2}}\le 1-\frac{1}{c}\inf_{i\in\bN}\liminf_{j\to \infty}\norm{z_i-z_j}^p.
\end{equation}
\end{lemm}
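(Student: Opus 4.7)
The plan is to reduce the refined statement to a direct application of the original property $(\beta_p)$ by extracting a suitably well-separated subsequence of $\{z_n\}$ whose minimum pairwise distance is close to the quantity $\alpha := \inf_{i\in\bN}\liminf_{j\to\infty}\|z_i-z_j\|^p$, and then invoking the quantitative bound $\beta(t) \ge t^p/c$.

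First, I would handle the trivial case $\alpha = 0$: since $\|x\|, \|z_n\| \le 1$ we always have $\|(x-z_n)/2\| \le 1$, so the inequality holds with right-hand side equal to $1$. So assume $\alpha > 0$ and fix an arbitrary $\vep \in (0,\alpha)$. My goal reduces to producing some $n \in \bN$ with
\[
\bnorm{\frac{x-z_n}{2}} \le 1 - \frac{\alpha - \vep}{c},
\]
since letting $\vep \to 0^+$ then yields the lemma.

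The key construction is a diagonal-style extraction of a subsequence $(z_{n_k})_{k\in\bN}$ with $\inf_{i \neq j}\|z_{n_i}-z_{n_j}\| \ge (\alpha-\vep)^{1/p}$. By definition of $\alpha$, for every $i$ we have $\liminf_{j\to\infty}\|z_i-z_j\| \ge (\alpha-\vep)^{1/p}$, so there is an index $N_i$ such that $\|z_i-z_j\| \ge (\alpha-\vep)^{1/p}$ whenever $j \ge N_i$. I would then build $n_1 < n_2 < \cdots$ inductively by choosing $n_{k+1} > \max\{n_k, N_{n_1},\dots,N_{n_k}\}$, guaranteeing the desired uniform separation.

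Finally, I would apply the definition of property $(\beta_p)$ to $z := x$ and the family $\{z_{n_k}\}_{k \in \bN}$, all of which lie in $B_\banX$, with separation parameter $t := (\alpha-\vep)^{1/p} > 0$. This yields some $k_0$ with
\[
\bnorm{\frac{x-z_{n_{k_0}}}{2}} \le 1 - \beta(t) \le 1 - \frac{t^p}{c} = 1 - \frac{\alpha-\vep}{c},
\]
which is exactly what we needed. I do not expect a real obstacle here; the whole content of the lemma is the (routine) observation that the global quantity $\inf_i \liminf_j \|z_i-z_j\|$ is realized, up to $\vep$, as a uniform pairwise separation along some subsequence, which is a textbook $\liminf$/diagonal argument.
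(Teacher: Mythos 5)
Your proof is correct and follows essentially the same route as the paper: both arguments use a small slack parameter, inductively extract a uniformly separated subsequence from the $\liminf$ hypothesis, apply the property $(\beta_p)$ modulus bound to that subsequence, and then let the slack tend to zero. The only cosmetic difference is that you use an additive slack $\alpha - \vep$ where the paper uses a multiplicative one $(1-\vep)t$.
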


\begin{proof}
Assume, as we may, that $\inf_{i\in\bN}\liminf_{j\in \bN}\norm{z_i-z_j}=t>0$ and hence for all $i\in \bN$ we have $\liminf_{j\to\infty}\norm{z_i-z_j}\ge t$. Let $\vep>0$ be arbitrary. A diagonal extraction argument gives a subsequence $\{z_{n_j}\}_{j\ge 1}$ such that for all $i,j\in\bN$ it holds $\norm{z_i-z_{n_j}}\ge (1-\vep)t$. Therefore, there exists an infinite subset $\bM\eqd\{n_1, n_2,\dots\}$ of $\bN$ such that $\inf_{i\neq j\in\bM}\norm{z_i-z_j}\ge (1-\vep)t$. Since by assumption $\betabar(t)\ge \frac{t^p}{c}$, it follows from the definition of the $(\beta)$-modulus that 
there exists $m\in \bM$ such that 
\begin{equation*}
\bnorm{\frac{x-z_m}{2}}\le 1-\frac{(1-\vep)^pt^p}{c}=1-\frac{(1-\vep)^p}{c}\inf_{i\in\bN}\liminf_{j\to \infty}\norm{z_i-z_j}^p.
\end{equation*}
Since $\vep>0$ was arbitrary, the conclusion holds.
\end{proof}

Lemma \ref{lem:1} and Lemma \ref{lem:betterbeta} are now used to prove a homogenous inequality in Banach spaces with property $(\beta_p)$ for $p>1$. 

\begin{lemm}
\label{lem:pumbelBanach}
If $p \in (1,\infty)$ and $\banXn$ has property $(\beta_p)$ with constant $c$, then for all $w,z\in \ban X$ and $\{x_n\}_{n\in \bN}\subseteq \ban X$ 
\begin{equation}
\label{eq:pumbel}
\frac{1}{2^p}\inf_{n\in \bN} \norm{w-x_n}^p+\frac{1}{K}\inf_{i\in\bN}\liminf_{j\in \bN}\norm{x_i-x_j}^p\le \frac{1}{2}\norm{w-z}^p + \frac{1}{2}\sup_{n\in\bN} \norm{x_n-z}^p
\end{equation}
where $K$ is the least solution in $[2c,\infty)$ to the inequality
\begin{equation}
\label{eq:condition}
\frac{1}{2^p}\left(\frac{2c}{K}+\left(2-\left(\frac{2c}{K}\right)^p\right)^{1/p}\right)^p + \frac{2^{p+1}}{K} \leq 1.
\end{equation}
\end{lemm}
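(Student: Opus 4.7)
The plan is to translate so that $z=0$ and to rescale by $p$-homogeneity so that $V^p+W^p=2$, where $V\eqd\|w\|$ and $W\eqd\sup_n\|x_n\|$; with $A\eqd\inf_n\|w-x_n\|^p$ and $B\eqd\inf_i\liminf_j\|x_i-x_j\|^p$, the desired inequality becomes $A/2^p+B/K\le 1$. I assume $V,W>0$ (the degenerate cases $V=0$ or $W=0$ follow from the triangle inequality alone and correspond to the limit $\alpha\to 0$ in \eqref{eq:condition}). By the power mean inequality $V+W\le 2$, so the hypothesis $\tfrac12 V+\tfrac12 W\le 1$ of Lemma \ref{lem:1} is satisfied.

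The first step is the refined midpoint estimate
\[A^{1/p}\;\le\;2-\frac{2\min(V,W)\,B}{cW^p}.\]
To prove this, apply Lemma \ref{lem:betterbeta} to the unit vector $w/V$ and the sequence $\{x_n/W\}\subseteq B_\banX$ to obtain $\inf_n\bnorm{\tfrac12(w/V-x_n/W)}\le 1-B/(cW^p)$; then, for each near-optimizing index $n$, invoke Lemma \ref{lem:1} with $v=w$, $w'=-x_n$, upper bounds $V$ and $W$, $\vep=\min(V,W)$, and $\delta=B/(cW^p)$, and pass to the infimum over $n$.

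The proof is completed by a case analysis keyed to the threshold $\alpha\eqd 2c/K$. Observe that \eqref{eq:condition} forces $\alpha\in(0,1)$ and $K\ge 2^{p+1}$, whence $B\le(2W)^p\le 2^{p+1}\le K$, so $B/K\in[0,1]$. In the \emph{balanced case} $\min(V,W)\ge\alpha$, the bound $W^p\le 2$ gives $\min(V,W)/W^p\ge\alpha/2$, so $A^{1/p}\le 2-2B/K$; the elementary inequality $(1-t)^p\le 1-t$ on $[0,1]$ for $p\ge 1$ then yields $A/2^p\le(1-B/K)^p\le 1-B/K$, hence $A/2^p+B/K\le 1$. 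In the \emph{imbalanced case} $\min(V,W)<\alpha$, the function $V\mapsto V+(2-V^p)^{1/p}$ is increasing on $[0,1]$ when $p>1$, so the triangle inequality together with the constraint $V^p+W^p=2$ and the case hypothesis yield $A^{1/p}\le V+W\le\alpha+(2-\alpha^p)^{1/p}$; combined with the universal diameter bound $B\le 2^{p+1}$, the desired inequality reduces directly to \eqref{eq:condition}.

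The principal technical obstacle is the ``double rescaling'' involving Lemmas \ref{lem:betterbeta} and \ref{lem:1}: Lemma \ref{lem:betterbeta} requires vectors in $B_\banX$, forcing the normalizations $w/V$ and $x_n/W$, but Lemma \ref{lem:1} is then needed to transport the resulting rescaled-midpoint bound back to a bound on $\bnorm{(w-x_n)/2}$, at the cost of the factor $\vep=\min(V,W)$. This factor is precisely large enough in the balanced regime to beat the triangle bound by the margin $B/K$, while the threshold $\alpha=2c/K$ is calibrated so that the extremal configuration in the imbalanced regime ($V=\alpha$ with $B$ as large as $(2W)^p\le 2^{p+1}$) reproduces exactly the two summands appearing in condition \eqref{eq:condition}.
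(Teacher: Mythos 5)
Your proof is correct and follows essentially the same path as the paper: translate to $z=0$, rescale so $\tfrac12V^p+\tfrac12W^p=1$, split on whether $\min(V,W)$ exceeds the threshold $2c/K$, handle the balanced case by chaining Lemmas \ref{lem:betterbeta} and \ref{lem:1}, and handle the imbalanced case by the triangle inequality plus monotonicity of $t\mapsto t+(2-t^p)^{1/p}$ and the defining inequality \eqref{eq:condition}. The only differences are cosmetic — you extract the intermediate estimate $A^{1/p}\le 2-2\min(V,W)B/(cW^p)$ unconditionally and close the balanced case via $(1-t)^p\le 1-t$, while the paper applies the two lemmas only inside Case 1 with $\vep=2c/K$ and closes via $t^p\le t$.
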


\begin{proof}
Before we begin the proof, note that inequality \eqref{eq:condition} has a solution $K \in [2c,\infty)$ because $p>1$.

Let $w,z\in \banX$ and $\{x_n\}_{n \in \bN} \subset \banX$. Since the distance induced by the norm of $\banX$ is translation invariant, we may assume $z=0$. We may also assume without loss of generality that $\sup_{n\in\bN}\norm{x_n}<\infty$, and by scale invariance of \eqref{eq:pumbel} we can assume that $\frac{1}{2}\|w\|^p + \frac{1}{2}\sup_{n\in\bN}\|x_n\|^p \le 1$. Thus equation \eqref{eq:pumbel} reduces to
$$
\inf_{n \in \bN} \left\|\frac{w-x_n}{2}\right\|^p + \frac{1}{K}\inf_{i \in \bN} \liminf_{j \to \infty} \norm{x_i-x_j}^p \leq 1.
$$
If $\inf_{i \in \bN} \liminf_{j \to \infty} \norm{x_i-x_j} = 0$, the above inequality holds trivially by the triangle inequality and convexity, so we may assume $\inf_{i \in \bN} \liminf_{j \to \infty} \norm{x_i-x_j} > 0$.

Set $W \eqd \norm{w}$ and $X \eqd \sup_{n \in \bN}\norm{x_n}$, so that
$$
\frac{1}{2}W + \frac{1}{2}X \le \left(\frac{1}{2}W^p + \frac{1}{2}X^p\right)^{1/p} \le 1.
$$
In particular remember that $\max\{W^p,X^p\} \le 2$. Set $\vep \eqd \frac{2c}{K}$, and note that $\vep \in (0,1]$. We consider separately the two cases $\min\{W,X\} \geq \vep$ and $\min\{W,X\} \le \vep$.

Assume first that $\min\{W,X\} \geq \vep$ holds. Lemma \ref{lem:betterbeta} implies
\begin{eqnarray*}
    \inf_{n \in \bN} \left\|\frac{w}{2W}-\frac{x_n}{2X}\right\| \leq 1-\frac{1}{c}\inf_{i\in\bN}\liminf_{j\to \infty}\left\|\frac{x_i}{X}-\frac{x_j}{X}\right\|^p \\
    \leq 1-\frac{1}{cX^p}\inf_{i\in\bN}\liminf_{j\to \infty}\left\| x_i-x_j \right\|^p \\
    \leq 1-\frac{1}{2c}\inf_{i\in\bN}\liminf_{j\to \infty}\left\|x_i-x_j\right\|^p.
\end{eqnarray*}
Let $\eta \in (0,1)$ be arbitrary. Then by the above inequality and our assumption that \\ $\inf_{i \in \bN} \liminf_{j \to \infty} \norm{x_i-x_j} > 0$, we may choose $m \in \bN$ such that
$$\left\|\frac{w}{2W}-\frac{x_m}{2X}\right\| \leq 1-\frac{1-\eta}{2c}\inf_{i\in\bN}\liminf_{j\to \infty}\left\|x_i-x_j\right\|^p.$$
This inequality shows that the hypotheses of Lemma \ref{lem:1} are fulfilled, and thus by the definition of $\vep$, the fact that $\left\|\frac{w-x_m}{2}\right\| \le 1$, and Lemma \ref{lem:1} we get
\begin{equation*}
\inf_{n \in \bN} \left\|\frac{w-x_n}{2}\right\|^p + \frac{1-\eta}{K}\inf_{i \in \bN} \liminf_{j \to \infty} \norm{x_i-x_j}^p  \le \left\|\frac{w-x_m}{2}\right\| + \frac{(1-\eta)\vep}{2c}\inf_{i \in \bN} \liminf_{j \to \infty} \norm{x_i-x_j}^p \leq 1.
\end{equation*}
Since $\eta \in (0,1)$ was arbitrary, we achieve the required inequality.

Now assume we are in the second case $\min\{W,X\} \leq \vep$. We just treat the subcase $W \leq \vep$; the other subcase follows from nearly the same argument. We have
\begin{eqnarray*}
\inf_{n \in \bN} \left\|\frac{w-x_n}{2}\right\|^p + \frac{1}{K}\inf_{i \in \bN} \liminf_{j \to \infty} \norm{x_i-x_j}^p &\leq& \left(\frac{W+X}{2}\right)^p + \frac{1}{K}(2X)^p \\
&\leq& \left(\frac{W+(2-W^p)^{1/p}}{2}\right)^p + \frac{1}{K}(2X)^p \\
&\leq& \left(\frac{\vep+(2-\vep^p)^{1/p}}{2}\right)^p + \frac{1}{K}2^{p+1} \\
&=&    \frac{1}{2^p}\left(\frac{2c}{K}+\left(2-\left(\frac{2c}{K}\right)^p\right)^{1/p}\right)^p + \frac{2^{p+1}}{K} \\
&\leq& 1
\end{eqnarray*}
where the last inequality is the definition of $K$, and the second-to-last inequality follows from the fact that $X^p \leq 2$, $W \leq \vep$, and the fact that $t \mapsto t+(2-t^p)^{1/p}$ is increasing on $[0,1]$.
\end{proof}

Since inequality \eqref{eq:pumbel} only involves the norm of differences of vectors, it will be convenient to introduce the following definition and terminology.

\begin{defi}
A metric space $\metXd$ is said to satisfy the \emph{$p$-umbel inequality} with constant $K\in(0,\infty)$ if for all $w,z\in \met X$ and $\{x_i\}_{i \in \bN}\subseteq \met X$ we have 
\begin{equation} \label{eq:pumbelmetric}
\frac{1}{2^p}\inf_{i \in \bN} \dX(w,x_i)^p+\frac{1}{K^p}\inf_{i\in\bN}\liminf_{j \to \infty} \dX(x_i,x_j)^p\le \frac{1}{2}\dX(z,w)^p + \frac{1}{2}\sup_{i \in \bN} \dX(z,x_i)^p
\end{equation} 
\end{defi}

The $p$-umbel inequality is a strengthening of the triangle inequality for sequences $\{x_i\}_{i \in \bN}$ that do not admit any Cauchy subsequence. The next theorem is the main result of this section.

\begin{theo}
\label{thm:pumbel->umbelp}
Let $p\in (0,\infty)$.
If $\metXd$ satisfies the $p$-umbel inequality with constant $K>0$, then $\metXd$ is umbel $p$-convex. Moreover, $\Pi_p^u(\metX)\le \max\{1,2^{\frac{1}{p}-1}\} \cdot K$.
\end{theo}

\begin{proof}
We will show a bit more than what is needed for Theorem \ref{thm:pumbel->umbelp}, and in this proof we allow $\dX$ to be a quasi-metric and not necessarily a genuine metric, \ie that instead of the triangle inequality we assume that there exists a constant $c\ge 1$ such that $\dX(x,y)\le c(\dX(x,z)+\dX(z,y))$ for all $x,y,z\in\metX$. We will show by induction on $k$ that for all maps $f\colon \wtree{\bN}{\le 2^k}\to \metX$ and all $r\in \bN$,
\begin{align*}
\frac{1}{K^p}  \sum_{s=1}^{k-1}\frac{1}{2^{k-1-s}}\sum_{t=1}^{2^{k-1-s}}\inf_{\nbar\in\wtree{\bN}{t2^{s+1}-2^s}}\inf_{\stackrel{\bar{\vep}\in\wtree{\bN}{2^s}:}{(\nbar,\bar{\vep})\in [\bN]^{\le 2^k}}}\liminf_{j\to \infty}\inf_{\stackrel{\bar{\delta}\in\wtree{\bN}{2^s-1}:}{(\nbar,j,\bar{\delta})\in\wtree{\bN}{\le 2^k}}}\frac{\dX \big( f(\nbar,\bar{\vep}),f(\nbar,j,\bar{\delta}) \big)^p}{2^{sp}}\\			
+ \inf_{\stackrel{\nbar \in \wtree{\bN}{2^k-1:}}{(r,\nbar)\in\wtree{\bN}{2^{k}}}}\frac{\dX(f(\emptyset),f(r,\nbar))^p}{2^{kp}} \le \max\{1,2^{1-p}\} \cdot c^p\frac{1}{2^{k}}\sum_{\ell=1}^{2^{k}}\sup_{\nbar\in \wtree{\bN}{\ell}}\dX \big( f(n_1,\dots,n_{\ell-1}),f(n_1,\dots,n_{\ell}) \big)^p.
\end{align*}
The conclusion of the theorem follows by discarding the additional non-negative term which is solely needed for the induction proof.

For the base case $k=1$, the inequality reduces to
\begin{equation*}
\inf_{\stackrel{n\in \bN}{n>r}}\frac{\dX(f(\emptyset),f(r,n))^p}{2^p} \le \max\{1,2^{1-p}\}\frac{c^p}{2} \left(\sup_{n\in\bN}\dX(f(\emptyset),f(n))^p + \sup_{(n_1,n_2)\in \wtree{\bN}{2}}\dX(f(n_1),f(n_1,n_2))^p\right).
\end{equation*}
Observing that this inequality is an immediate consequence of the quasi-triangle and H\"older inequalities, the base case is settled. For convenience, we assume throughout the remainder of the proof that $p \geq 1$, so that $\max\{1,2^{1-p}\} = 1$. The proof carries through line-by-line in the case $p \leq 1$, but with an additional factor of $2^{1-p}$ on the right-hand side.

We now proceed with the inductive step and fix $i\in \bN$ and $f\colon \wtree{\bN}{\le 2^{k+1}}\to \metX$. Given $\vep>0$, we pick $\mbar\in \wtree{\bN}{\le 2^k-1}$ such that 
\begin{equation*}
\frac{\dX(f(\troot),f(i,\mbar))^p}{2^{kp}}\le \inf_{\nbar\in\wtree{\bN}{2^{k}-1}}\frac{\dX(f(\emptyset),f(i,\nbar))^p}{2^{kp}}+\vep,
\end{equation*}
and for each $r\in\bN$, choose $\bar{u}(r)\in\wtree{\bN}{2^{k}-1}$ so that
\begin{equation*}
\frac{\dX(f(i,\mbar),f(i,\mbar,r,\bar{u}(r)))^p}{2^{kp}}\le \inf_{\nbar\in\wtree{\bN}{2^{k}-1}}\frac{\dX(f(i,\mbar),f(i,\mbar,r,\nbar))^p}{2^{kp}}+\vep.
\end{equation*}
In order to simplify the (otherwise awkward and tedious) notation we have implicitly assumed above that $\mbar$, $r$, and $\bar{u}(r)$ are such that $i<m_1<\dots<m_{l}<r<u_1(r)<\dots<u_{l'}(r)$, or in other words that $(i,\mbar,r,\bar{u}(r))$ truly belongs to $\wtree{\bN}{\le 2^{k+1}}$. We will follow this notational convention here and in the ensuing proofs. 

By the induction hypothesis applied to the restriction of $f$ to $\wtree{\bN}{\le 2^{k}}$ (and with $r=i$) we get
\begin{align} 
\label{eq:umbelaux1} 
\nonumber \frac{1}{K^p} & \sum_{s=1}^{k-1}\frac{1}{2^{k-1-s}}\sum_{t=1}^{2^{k-1-s}}\inf_{\nbar\in\wtree{\bN}{t2^{s+1}-2^s}}\inf_{\bar{\vep}\in [\bN]^{2^s}}\liminf_{j\to \infty}\inf_{\bar{\delta}\in\wtree{\bN}{2^s-1}}\frac{\dX(f(\nbar,\bar{\vep}),f(\nbar,j,\bar{\delta}))^p}{2^{sp}}\\
 &+ \inf_{\nbar\in\wtree{\bN}{2^{k}-1}}\frac{\dX(f(\emptyset),f(i,\nbar))^p}{2^{kp}}  			
					 \le  \frac{c^p}{2^k}\sum_{\ell=1}^{2^k}\sup_{\nbar\in \wtree{\bN}{\ell}}\dX(f(n_1,\dots,n_{\ell-1}),f(n_1,\dots,n_{\ell}))^p.
\end{align}
On the other hand, the induction hypothesis applied to $g(\nbar)\eqd f((i,\mbar),\nbar)$ where $\nbar\in \wtree{\bN}{\le 2^{k}}$ gives
\begin{align*}
\frac{1}{K^p} & \sum_{s=1}^{k-1}\frac{1}{2^{k-1-s}}\sum_{t=1}^{2^{k-1-s}}\inf_{\nbar\in\wtree{\bN}{t2^{s+1}-2^s}}\inf_{\bar{\vep}\in [\bN]^{2^s}}\liminf_{j\to \infty}\inf_{\bar{\delta}\in\wtree{\bN}{2^s-1}}\frac{\dX(g(\nbar,\bar{\vep}),g(\nbar,j,\bar{\delta}))^p}{2^{sp}}\\			
					& +\inf_{\nbar\in\wtree{\bN}{2^{k}-1}}\frac{\dX(g(\troot),g(\nbar))^p}{2^{kp}} \le \frac{c^p}{2^k}\sum_{\ell=1}^{2^k}\sup_{\nbar\in \wtree{\bN}{\ell}}\dX(g(n_1,\dots,n_{\ell-1}),g(n_1,\dots,n_{\ell}))^p.
\end{align*}
Observe first that, for any $1 \leq \ell \leq 2^k$,
\begin{align*}
\sup_{\nbar\in \wtree{\bN}{\ell}}\dX(g(n_1,\dots,n_{\ell-1}),g(n_1,\dots,n_{\ell}))^p = \sup_{\nbar\in \wtree{\bN}{\ell}}\dX(f(i,\mbar,n_1,\dots,n_{\ell-1}),f(i,\mbar,n_1,\dots,n_{\ell}))^p\\
\le \sup_{\nbar\in \wtree{\bN}{2^k+\ell}}\dX(f(n_1,\dots,n_{2^k+\ell-1}),f(n_1,\dots,n_{2^k+\ell}))^p,
\end{align*}
since we are taking the supremum over the set of all edges between level $2^k+\ell-1$ and level $2^k+\ell$ instead of a subset of it. Also, for each $s = 1, \dots, k-1$,
\begin{align*}
\frac{1}{2^{k-1-s}} & \sum_{t=1}^{2^{k-1-s}}\inf_{\nbar\in\wtree{\bN}{t2^{s+1}-2^s}} \inf_{\bar{\vep}\in [\bN]^{2^s}} \liminf_{j\to \infty}\inf_{\bar{\delta}\in\wtree{\bN}{2^s-1}}\frac{\dX(g(\nbar,\bar{\vep}),g(\nbar,j,\bar{\delta}))^p}{2^{sp}}\\
&  =  \frac{1}{2^{k-1-s}}\sum_{t=1}^{2^{k-1-s}}\inf_{\nbar\in\wtree{\bN}{t2^{s+1}-2^s}}\inf_{\bar{\vep}\in [\bN]^{2^s}}\liminf_{j\to \infty}\inf_{\bar{\delta}\in\wtree{\bN}{2^s-1}}\frac{\dX(f(i,\mbar,\nbar,\bar{\vep}),f(i,\mbar,\nbar,j,\bar{\delta}))^p}{2^{sp}}\\
& \ge  \frac{1}{2^{k-1-s}}\sum_{t=1}^{2^{k-1-s}} \inf_{\nbar\in\wtree{\bN}{2^k+t2^{s+1}-2^s}}\inf_{\bar{\vep}\in [\bN]^{2^s}}\liminf_{j\to \infty}\inf_{\bar{\delta}\in\wtree{\bN}{2^s-1}}\frac{\dX(f(\nbar,\bar{\vep}),f(\nbar,j,\bar{\delta}))^p}{2^{sp}},
\end{align*}
since $(i,\mbar,\nbar)\in \wtree{\bN}{2^k+t2^{s+1}-2^s}$ for all $\nbar\in\wtree{\bN}{t2^{s+1}-2^s}$.

Therefore, it follows from the two relaxations above (and a reindexing) that 
\begin{align}
\label{eq:umbelaux2}
\nonumber \frac{1}{K^p} & \sum_{s=1}^{k-1}\frac{1}{2^{k-1-s}}\sum_{t=2^{k-1-s}+1}^{2^{k-s}}\inf_{\nbar\in\wtree{\bN}{t2^{s+1}-2^s}}\inf_{\bar{\vep}\in [\bN]^{2^s}}\liminf_{j\to \infty}\inf_{\bar{\delta}\in\wtree{\bN}{2^s-1}}\frac{\dX(f(\nbar,\bar{\vep}),f(\nbar,j,\bar{\delta}))^p}{2^{sp}}\\			
					& + \inf_{\nbar\in\wtree{\bN}{2^{k}-1}}\frac{\dX(f(i,\mbar),f(i,\mbar,r,\nbar))^p}{2^{kp}} \le \frac{c^p}{2^{k}}\sum_{\ell=2^k+1}^{2^{k+1}} \dX(f(n_1,\dots,n_{\ell-1}),f(n_1,\dots,n_{\ell}))^p.
\end{align}
Taking the supremum over $r$ in \eqref{eq:umbelaux2} and then averaging the resulting inequality with \eqref{eq:umbelaux1} yields
\begin{align}
\label{eq:umbelaux3}
\nonumber \frac{1}{2^{kp}}\left(\frac{1}{2}\inf_{\nbar\in\wtree{\bN}{2^{k}-1}}\dX(f(\emptyset),f(i,\nbar))^p + \frac{1}{2}\sup_{r\in \bN} \inf_{\nbar\in\wtree{\bN}{2^{k}-1}}\dX(f(i,\mbar),f(i,\mbar,r,\nbar))^p\right)\\
 + \frac{1}{K^p}\sum_{s=1}^{k-1}\frac{1}{2^{k-s}}\sum_{t=1}^{2^{k-s}}\inf_{\nbar\in\wtree{\bN}{t2^{s+1}-2^s}}\inf_{\bar{\vep}\in [\bN]^{2^s}}\liminf_{j\to \infty}\inf_{\bar{\delta}\in\wtree{\bN}{2^s-1}}\frac{\dX(f(\nbar,\bar{\vep}),f(\nbar,j,\bar{\delta}))^p}{2^{sp}}\\			
\nonumber					\le  \frac{c^p}{2^{k+1}}\sum_{\ell=1}^{2^{k+1}}\sup_{\nbar\in \wtree{\bN}{\ell}}\dX(f(n_1,\dots,n_{\ell-1}),f(n_1,\dots,n_{\ell}))^p.
\end{align}
If we let $w\eqd f(\troot)$, $z\eqd f(i,\mbar)$, and $x_r\eqd f(i,\mbar,r, \bar{u}(r))$, it follows from how $\mbar$ and $\bar{u}(r)$ were chosen, that
\begin{align}
\label{eq:umbelaux4}
\frac{1}{2^{kp}} & \left(\frac{1}{2}\dX(w,z)^p + \frac{1}{2}\sup_{r\in \bN}\dX(z,x_r)^p\}\right) \nonumber \\
 & \le \frac{1}{2^{kp}} \left(\frac{1}{2}\inf_{\nbar\in\wtree{\bN}{2^{k}-1}}\dX(f(\emptyset),f(i,\nbar))^p + \frac{1}{2}\sup_{r\in \bN} \inf_{\nbar\in\wtree{\bN}{2^{k}-1}}\dX(f(i,\mbar),f(i,\mbar,r,\nbar))^p\right) + \vep
\end{align}
The $p$-umbel inequality combined with \eqref{eq:umbelaux3} and \eqref{eq:umbelaux4} gives 
\begin{align}
\label{eq:umbelaux5}
\nonumber \frac{1}{2^{(k+1)p}} & \inf_{r\in \bN}\dX(w,x_r)^p+\frac{1}{K^p}\frac{1}{2^{kp}}\inf_{r\in \bN}\liminf_{q \to \infty}\dX(x_r,x_q)^p\\
 & + \frac{1}{K^p}\sum_{s=1}^{k-1} \frac{1}{2^{k-s}}\sum_{t=1}^{2^{k-s}} \inf_{\nbar\in\wtree{\bN}{t2^{s+1}-2^s}}\inf_{\bar{\vep}\in [\bN]^{2^s}}\liminf_{j\to \infty}\inf_{\bar{\delta}\in\wtree{\bN}{2^s-1}}\frac{\dX(f(\nbar,\bar{\vep}),f(\nbar,j,\bar{\delta}))^p}{2^{sp}}\\			
\nonumber 					 & \le   \frac{c^p}{2^{k+1}}\sum_{\ell=1}^{2^{k+1}} \sup_{\nbar\in \wtree{\bN}{\ell}}\dX(f(n_1,\dots,n_{\ell-1}),f(n_1,\dots,n_{\ell}))^p+\vep.
\end{align}
Now observe that
\begin{equation*}
\inf_{r\in \bN}\dX(w,x_r)^p  =  \inf_{r\in \bN}\dX( f(\troot),f(i,\mbar,r, \bar{u}(r)))^p \ge \inf_{\nbar\in \wtree{\bN}{2^{k+1}-1}}\dX( f(\troot),f(i,\nbar))^p,
\end{equation*}
and
\begin{align*}
\inf_{r\in \bN}\liminf_{q \to \infty}\dX(x_r,x_q)^p & = \inf_{r\in \bN}\liminf_{q \to \infty}\dX(f(i,\mbar,r, \bar{u}(r)),f(i,\mbar,q, \bar{u}(q)))^p \\
								& \ge  \inf_{r\in \bN}\liminf_{q \to \infty}\inf_{\bar{\delta}\in \wtree{\bN}{2^k-1}}\dX(f(i,\mbar,r, \bar{u}(r)),f(i,\mbar,q, \bar{\delta}))^p\\
								& \ge \inf_{\bar{\vep}\in \wtree{\bN}{2^k}}\liminf_{q \to \infty}\inf_{\bar{\delta}\in \wtree{\bN}{2^k-1}}\dX(f(i,\mbar,\bar{\vep}),f(i,\mbar,q, \bar{\delta}))^p\\
								& \ge \inf_{\nbar\in \wtree{\bN}{2^k}} \inf_{\bar{\vep}\in \wtree{\bN}{2^k}}\liminf_{q \to \infty}\inf_{\bar{\delta}\in \wtree{\bN}{2^k-1}}\dX(f(\nbar,\bar{\vep}),f(\nbar,q, \bar{\delta}))^p.
\end{align*}
Plugging in the two relaxed inequalities above in \eqref{eq:umbelaux5} we obtain
\begin{align*}
\inf_{\nbar\in \wtree{\bN}{2^{k+1}}}\frac{\dX( f(\troot),f(i,\nbar))^p}{2^{(k+1)p}}+\frac{1}{K^p}\inf_{\nbar\in \wtree{\bN}{2^k}} \inf_{\bar{\vep}\in \wtree{\bN}{2^k}}\liminf_{q \to \infty}\inf_{\bar{\delta}\in \wtree{\bN}{2^k-1}}\frac{\dX(f(\nbar,\bar{\vep}),f(\nbar,q, \bar{\delta}))^p}{2^{kp}}\\
 + \frac{1}{K^p}\sum_{s=1}^{k-1} \frac{1}{2^{k-s}}\sum_{t=1}^{2^{k-s}} \inf_{\nbar\in\wtree{\bN}{t2^{s+1}-2^s}}\inf_{\bar{\vep}\in [\bN]^{2^s}}\liminf_{j\to \infty}\inf_{\bar{\delta}\in\wtree{\bN}{2^s-1}}\frac{\dX(f(\nbar,\bar{\vep}),f(\nbar,j,\bar{\delta}))^p}{2^{sp}}\\			
					\le  \frac{c^p}{2^{k+1}}\sum_{\ell=1}^{2^{k+1}} \sup_{\nbar\in \wtree{\bN}{\ell}}\dX(f(n_1,\dots,n_{\ell-1}),f(n_1,\dots,n_{\ell}))^p+\vep,
\end{align*}
and hence 
\begin{align*}
\frac{1}{K^p} & \sum_{s=1}^{k} \frac{1}{2^{k-s}}\sum_{t=1}^{2^{k-s}} \inf_{\nbar\in\wtree{\bN}{t2^{s+1}-2^s}}\inf_{\bar{\vep}\in [\bN]^{2^s}}\liminf_{j\to \infty}\inf_{\bar{\delta}\in\wtree{\bN}{2^s-1}}\frac{\dX(f(\nbar,\bar{\vep}),f(\nbar,j,\bar{\delta}))^p}{2^{sp}}\\			
				& + \inf_{\nbar\in \wtree{\bN}{2^{k+1}}}\frac{\dX( f(\troot),f(i,\nbar))^p}{2^{(k+1)p}} \le  \frac{c^p}{2^{k+1}}\sum_{\ell=1}^{2^{k+1}}  \sup_{\nbar\in \wtree{\bN}{\ell}}\dX(f(n_1,\dots,n_{\ell-1}),f(n_1,\dots,n_{\ell}))^p+\vep.
\end{align*}
Since $\vep$ is arbitrary the induction step is completed.
\end{proof}


The next corollary is an immediate consequence of Lemma \ref{lem:pumbelBanach} and Theorem \ref{thm:pumbel->umbelp}.

\begin{coro}\label{cor:betap->umbelp}
A Banach space with property $(\beta_p)$ for some $p\in(1,\infty)$ is umbel $p$-convex.
\end{coro}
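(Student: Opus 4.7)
The plan is to combine Lemma \ref{lem:pumbelBanach} and Theorem \ref{thm:pumbel->umbelp} in sequence. First, I would apply Lemma \ref{lem:pumbelBanach} to the Banach space $\banX$: since $\banX$ has property $(\beta_p)$ with some constant $c>0$ and $p>1$, the lemma produces a constant $K$ (the least solution in $[2c,\infty)$ to inequality \eqref{eq:condition}, which exists precisely because $p>1$) such that for every $w,z\in\banX$ and every sequence $\{x_n\}_{n\in\bN}\subseteq\banX$,
\[
\frac{1}{2^p}\inf_{n}\norm{w-x_n}^p+\frac{1}{K}\inf_{i}\liminf_{j\to\infty}\norm{x_i-x_j}^p\le \frac{1}{2}\norm{w-z}^p+\frac{1}{2}\sup_{n}\norm{x_n-z}^p.
\]

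Next I would match this homogeneous inequality against the Definition of the $p$-umbel inequality. The only subtle point is the normalization of the constant on the middle term: the lemma gives $1/K$, while the definition prescribes $1/K'^{p}$. Setting $K'\eqd K^{1/p}$ reconciles the two and shows that $\banX$, viewed as a metric space under the norm distance, satisfies the $p$-umbel inequality with constant $K^{1/p}$.

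Finally, I would invoke Theorem \ref{thm:pumbel->umbelp}, which converts the $p$-umbel inequality into umbel $p$-convexity. This yields the quantitative bound
\[
\Pi_p^u(\banX)\le \max\{1,2^{\frac{1}{p}-1}\}\cdot K^{1/p}=K^{1/p},
\]
where the equality uses $p>1$ to collapse the prefactor to $1$. In particular, $\banX$ is umbel $p$-convex, which is the content of the corollary. There is no real obstacle here: both the $\beta_p$-to-umbel-inequality step and the umbel-inequality-to-umbel-convexity step have been established in the preceding results, and the only bookkeeping is the $K \mapsto K^{1/p}$ change of variable needed to align the normalizations between the lemma and the metric definition.
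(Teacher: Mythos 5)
Your proof is correct and follows exactly the paper's intended argument: the paper states that the corollary is an immediate consequence of Lemma \ref{lem:pumbelBanach} and Theorem \ref{thm:pumbel->umbelp}, which is precisely your two-step chain. The only added value is your explicit $K\mapsto K^{1/p}$ renormalization reconciling \eqref{eq:pumbel} with \eqref{eq:pumbelmetric}, which is indeed the correct bookkeeping.
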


Recall that it follows from \cite{BKL10} that if a Banach space $\banX$ is reflexive and does not contain equi-bi-Lipschitz copies of the countably branching trees, then $\banX$ admits an equivalent norm with property $(\beta)$. Therefore, to complete the proof of Theorem \ref{thmA}, it remains to show that a Banach space that is umbel $p$-convex for some $p\in(1,\infty)$ satisfies those requirements. 

We will first show that reflexivity is implied by umbel $p$-convexity. The umbel $p$-convexity inequality \eqref{eq:umbel-p-convex} is rather complex, and for many applications, such as the reflexivity problem at stake, certain simpler relaxed inequalities will suffice. For example, the following relaxation of the umbel $p$-convexity inequality will be sufficient to ensure reflexivity:

There exists $C>0$ such that for all $k\ge 1$ and all $f\colon \tree^\omega_{2^k}=(\wtree{\bN}{\le 2^k},\sd_\tree)\to \banX$,

\begin{align}
\label{eq:relaxed-umbel-p}
\sum_{s=1}^{k-1}\inf_{\nbar\in\wtree{\bN}{\le 2^{k}-2^s}}\inf_{\stackrel{\bar{\delta}\in\wtree{\bN}{2^s}\colon}{(\nbar,\bar{\delta})\in \wtree{\bN}{\le 2^k}}}\liminf_{j\to\infty}\inf_{\stackrel{\bar{\eta}\in\wtree{\bN}{2^s-1}\colon}{(\nbar,j,\bar{\eta})\in \wtree{\bN}{\le 2^k}}}\frac{\sd(f(\nbar,\bar{\delta}),f(\nbar,j,\bar{\eta}))^p}{2^{sp}} \le C^p\lip(f)^p.
\end{align}

\begin{rema}
\label{rem:relaxed-p-umbel}
Consider the following relaxation of the $p$-umbel inequality:

For all $w,z\in \metX$ and $\{x_n\}_{n\in \bN}\subseteq \metX$ 
\begin{equation}
\label{eq:relaxed-p-umbel}
\frac{1}{2^p}\inf_{n\in \bN} \dX(w,x_n)^p+\frac{1}{K^p}\inf_{i\in\bN}\liminf_{j\in \bN}\dX(x_i,x_j)^p\le \max\{\dX(w,z)^p, \sup_{n\in\bN} \dX(x_n,z)^p\}.
\end{equation}
Using similar and slightly simpler arguments to those in the proof of Lemma \ref{lem:betterbeta}, we could show that if $p \in (0,\infty)$ and $\banXn$ has property $(\beta_p)$, then the metric induced by the norm on $\banX$ satisfies inequality \eqref{eq:relaxed-p-umbel}. Moreover, the relaxation of the umbel $p$-convexity inequality \eqref{eq:relaxed-umbel-p} can then be derived from the relaxation of the $p$-umbel inequality in a similar way umbel $p$-convexity was derived from the $p$-umbel inequality (and the proof also works for quasi-metrics).
\end{rema}

The following lemma can be deduced from one of James' characterization of reflexivity, and we refer to \cite[Lemma 3.0.1]{DKR16} for its proof. Recall that $[\bN]^{<\omega}$ denotes the set of all finite subset of $\bN$, and $([\bN]^{<\omega}, \sd_\tree)$ is the countably branching tree of infinite height equipped with the tree metric.

\begin{lemm}\label{lem:James}
If $\banX$ is non-reflexive, then for every $\theta\in(0,1)$, there exists a $1$-Lipschitz map $g\colon ([\bN]^{<\omega}, \sd_\tree)\to \banX$ such that for all $\ubar=(n_1,\dots,n_{s},n_{s+1},\dots,n_{s+k})$ and  $\bar{v}=(n_1,\dots,n_{s},m_1,\dots, m_k)$ where $n_1<\dots<n_s<n_{s+1}<\dots<n_{s+k}<m_1<\dots<m_k$,
\begin{equation}\label{eq:James-sep}
\norm{g(\ubar)-g(\bar{v})}_\banX \ge  \frac{\theta}{3}\sd_\tree(\ubar,\bar{v}) 
\end{equation}
\end{lemm}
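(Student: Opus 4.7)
The plan is to derive $g$ from James' classical characterization of non-reflexivity: $\banX$ is non-reflexive if and only if for every $\theta\in(0,1)$ there exist $(x_n)_{n\in\bN}\subset B_\banX$ and $(f_n)_{n\in\bN}\subset B_{\banX^*}$ satisfying $f_i(x_j)=\theta$ whenever $i\le j$ and $f_i(x_j)=0$ whenever $i>j$. Given $\theta\in(0,1)$, fix such sequences and define $g\colon([\bN]^{<\omega},\sd_\tree)\to\banX$ by $g(\emptyset)\eqd 0$ and
\[
g(n_1,\dots,n_k)\eqd \sum_{i=1}^k x_{n_i}
\]
for every nonempty $(n_1,\dots,n_k)\in[\bN]^{<\omega}$.

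To check that $g$ is $1$-Lipschitz, it suffices, since $\sd_\tree$ is the shortest-path metric on the tree, to verify the estimate on parent--child pairs; for such a pair we have $g(n_1,\dots,n_k)-g(n_1,\dots,n_{k-1})=x_{n_k}\in B_\banX$, which settles the claim.

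For the lower bound, fix $\ubar$ and $\bar v$ as in the statement, so that $\sd_\tree(\ubar,\bar v)=2k$ and
\[
g(\ubar)-g(\bar v)=\sum_{i=1}^k x_{n_{s+i}}-\sum_{j=1}^k x_{m_j}.
\]
Evaluating the functional $f_{m_1}\in B_{\banX^*}$, we observe that $f_{m_1}(x_{n_{s+i}})=0$ for every $i\in\{1,\dots,k\}$ (since $m_1>n_{s+k}\ge n_{s+i}$), while $f_{m_1}(x_{m_j})=\theta$ for every $j\in\{1,\dots,k\}$ (since $m_1\le m_j$). Therefore $|f_{m_1}(g(\ubar)-g(\bar v))|=k\theta$, which gives $\norm{g(\ubar)-g(\bar v)}_\banX\ge k\theta=\tfrac{\theta}{2}\sd_\tree(\ubar,\bar v)\ge\tfrac{\theta}{3}\sd_\tree(\ubar,\bar v)$.

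The only delicate point is the choice of functional: picking $f_{m_1}$, indexed by the \emph{first} ``late'' integer of $\bar v$, is what makes the functional vanish on the entire $\ubar$-contribution while being maximal on the $\bar v$-contribution. An alternate choice such as $f_{n_{s+k}}$ yields only the weaker estimate $(k-1)\theta$, which collapses when $k=1$. Once the correct functional is identified, the estimate is an immediate consequence of the linearity of $g$ and the biorthogonality relations.
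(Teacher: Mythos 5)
Your proof is correct and is precisely the deduction the paper has in mind: the text states the lemma ``can be deduced from one of James' characterization of reflexivity'' and defers the details to \cite{DKR16}, and you carry out exactly that deduction with the linear map $g(n_1,\dots,n_k)=\sum_{i=1}^k x_{n_i}$ and the well-chosen functional $f_{m_1}$. Your computation in fact yields the sharper constant $\theta/2$ in place of the stated $\theta/3$, which of course suffices.
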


\begin{rema}
In fact, the conclusion of Lemma \ref{lem:James} holds under the weaker assumption that the Banach space does not have the alternating Banach-Saks property (cf. \cite{Beauzamy79}).
\end{rema}

\begin{prop}\label{prop:reflexivity}
Let $\banXn$ be a Banach space. If $\banX$ supports the inequality \eqref{eq:relaxed-umbel-p} for some 
$p\in(0,\infty)$, then $\banX$ is reflexive.
In particular, if $\banX$ is umbel $p$-convex for some $p\in(0,\infty)$, then $\banX$ is reflexive.
\end{prop}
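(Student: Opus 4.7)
The natural approach is by contrapositive: assume $\banX$ is non-reflexive and use a James-type map to refute \eqref{eq:relaxed-umbel-p} for $k$ sufficiently large. Fix any $\theta \in (0,1)$ and let $g \colon ([\bN]^{<\omega}, \sd_\tree) \to \banX$ be the $1$-Lipschitz map supplied by Lemma \ref{lem:James}, so that \eqref{eq:James-sep} holds on all pairs of tuples with the prescribed ordering. For each $k \ge 1$, apply \eqref{eq:relaxed-umbel-p} to $f \eqd g|_{\wtree{\bN}{\le 2^k}}$; since $\lip(f) \le 1$, the right-hand side is at most $C^p$.

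The core step is to produce a positive lower bound, uniform in $s$, for each summand on the left-hand side. Fix $s \in \{1,\dots,k-1\}$ and arbitrary admissible $\nbar \in \wtree{\bN}{2^k-2^s}$ and $\bar{\delta} \in \wtree{\bN}{2^s}$. Because entries of tuples in $\wtree{\bN}{\le 2^k}$ are strictly increasing, every admissible $j$ and $\bar{\eta}$ satisfy $j > \max \nbar$ and $\min \bar{\eta} > j$; once $j > \max \bar{\delta}$ (which happens eventually as $j \to \infty$), the pair $\ubar \eqd (\nbar,\bar{\delta})$ and $\bar{v} \eqd (\nbar,j,\bar{\eta})$ fits exactly the hypothesis of Lemma \ref{lem:James} with common prefix $\nbar$. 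Both $\ubar$ and $\bar{v}$ lie at height $2^k$ and share a common ancestor at height $2^k - 2^s$, so $\sd_\tree(\ubar,\bar{v}) = 2^{s+1}$, and \eqref{eq:James-sep} yields $\|g(\ubar)-g(\bar{v})\|_\banX \ge (\theta/3)\cdot 2^{s+1}$. After dividing by $2^{sp}$ and taking $\liminf_{j\to\infty}\inf_{\bar{\eta}}$, each summand is bounded below by $(2\theta/3)^p$, independent of $\nbar$, $\bar{\delta}$, and $s$.

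Summing over $s = 1,\dots,k-1$, the left-hand side of \eqref{eq:relaxed-umbel-p} is at least $(k-1)(2\theta/3)^p$, which contradicts the upper bound $C^p$ once $k > 1 + (3C/(2\theta))^p$. The only mildly subtle point is verifying that the tuples appearing in the nested infima and $\liminf$ genuinely satisfy the strict ordering hypothesis of Lemma \ref{lem:James}; this is automatic from the increasing-coordinate convention together with the freedom to take $j$ arbitrarily large inside $\liminf_{j\to\infty}$, and the second conclusion of the proposition then follows since umbel $p$-convexity immediately implies \eqref{eq:relaxed-umbel-p}.
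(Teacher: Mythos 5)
Your proof is correct and is essentially identical to the paper's: both argue by contrapositive, restrict the $1$-Lipschitz James-type map $g$ of Lemma \ref{lem:James} to $\wtree{\bN}{\le 2^k}$, and use the separation bound \eqref{eq:James-sep} together with $\sd_\tree(\ubar,\bar{v})=2^{s+1}$ (valid once $j>\max\bar\delta$, as guaranteed inside the $\liminf_{j\to\infty}$) to lower-bound each summand by $(2\theta/3)^p$, forcing $C^p\ge(k-1)(2\theta/3)^p$. The only cosmetic difference is that you make the ordering verification for Lemma \ref{lem:James} explicit, which the paper handles by the single condition $j>\delta_{2^s}$.
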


\begin{proof}
Assume that $\banX$ supports the inequality \eqref{eq:relaxed-umbel-p} for some 
$p\in(0,\infty)$ but is not reflexive. Consider the restriction to $[\bN]^{\le 2^k}$ of the map $g$ from Lemma \ref{lem:James}. Then, for all $\nbar\in \wtree{\bN}{\le 2^k-2^{s}}$, $\bar{\delta}=(\delta_1,\dots,\delta_{2^s})\in \wtree{\bN}{2^{s}}$, $j \in \bN$ and $\bar{\eta} \in \wtree{\bN}{2^{s}-1}$ such that $(\nbar , \bar{\delta}), (\nbar , j , \bar{\eta})\in\wtree{\bN}{\le 2^k}$, it follows from \eqref{eq:James-sep} that if $j>\delta_{2^s}$, then $\norm{g(\nbar, \bar{\delta})-g(\nbar , j , \bar{\eta})}_\banX^p\ge \frac{\theta^p}{3^p}2^{(s+1)p}$. Therefore, 
	\begin{equation*}
		\sum_{s=1}^{k-1}\inf_{\nbar\in\wtree{\bN}{\le 2^k-2^{s}}}\inf_{\stackrel{\bar{\delta}\in\wtree{\bN}{2^s}\colon}{(\nbar,\bar{\delta})\in \wtree{\bN}{\le 2^k}}}\liminf_{j\to\infty}\inf_{\stackrel{\bar{\eta}\in\wtree{\bN}{2^s-1}\colon}{(\nbar,j,\bar{\eta})\in \wtree{\bN}{\le 2^k}}}\frac{\norm{g(\nbar,\bar{\delta})-g(\nbar,j,\bar{\eta})}_\banX^p}{2^{sp}} \ge (k-1)\Big(\frac{2\theta }{3}\Big)^p,
	\end{equation*}
and since $g$ is $1$-Lipschitz, inequality \eqref{eq:relaxed-umbel-p} gives $C^p\ge (k-1)\Big(\frac{2\theta }{3}\Big)^p$ for all $k\ge 1$; a contradiction.
\end{proof}

An argument similar to the proof of Proposition \ref{prop:reflexivity} show that there is no equi-bi-Lipschitz embeddings of the countably branching trees of finite but arbitrarily large height, into a metric space that supports the inequality \eqref{eq:relaxed-umbel-p} for some $p\in(0,\infty)$. The simple argument is deferred to Proposition \ref{prop:tree-dist} in the next section. Theorem \ref{thmA} can be derived from Theorem \ref{thm:renorming} and the following corollary which follows from the above discussion.

\begin{coro}
\label{cor:metchar}
Let $\banX$ be a Banach space. The following assertions are equivalent.
\begin{enumerate}
\item $\banX$ admits an equivalent norm with property $(\beta_p)$ for some $p\in(1,\infty)$.
\item $\banX$ is umbel $p$-convex for some $p\in(1,\infty)$.
\item $\banX$ supports the relaxation of the umbel $p$-convexity inequality \eqref{eq:relaxed-umbel-p} for some 
$p\in(1,\infty)$.
\end{enumerate}
\end{coro}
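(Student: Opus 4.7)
The plan is to establish the cycle $(1) \Rightarrow (2) \Rightarrow (3) \Rightarrow (1)$, with the substantive direction being $(3) \Rightarrow (1)$, which routes through the renorming characterizations of Theorems \ref{thm:BKL} and \ref{thm:renorming}; the other two implications are essentially bookkeeping.

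For $(1) \Rightarrow (2)$, apply Corollary \ref{cor:betap->umbelp}: Lemma \ref{lem:pumbelBanach} converts property $(\beta_p)$ into the $p$-umbel inequality in any Banach space, and Theorem \ref{thm:pumbel->umbelp} in turn converts the $p$-umbel inequality into umbel $p$-convexity.

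For $(2) \Rightarrow (3)$, the relaxed inequality \eqref{eq:relaxed-umbel-p} should be viewed as a direct weakening of \eqref{eq:umbel-p-convex}: the right-hand side of \eqref{eq:umbel-p-convex} is controlled by $\Pi^p\lip(f)^p$ (since every edge in $\tree^\omega_{2^k}$ has length one and $f$ is Lipschitz), while the left-hand side of \eqref{eq:relaxed-umbel-p} keeps only the single ``deepest'' summand $t=2^{k-1-s}$ at each scale $s$ from the averaged sum in \eqref{eq:umbel-p-convex}. A mild technical hurdle is that the prefactor $2^{-(k-1-s)}$ in \eqref{eq:umbel-p-convex} prevents a clean term-by-term extraction; this is avoided by observing that umbel $p$-convexity also drives Proposition \ref{prop:reflexivity} and the forthcoming Proposition \ref{prop:tree-dist}, whence $(2) \Rightarrow (1)$ via steps $(i)$--$(iv)$ below, and then $(1) \Rightarrow (3)$ via Remark \ref{rem:relaxed-p-umbel}.

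For $(3) \Rightarrow (1)$, suppose $\banX$ supports \eqref{eq:relaxed-umbel-p} for some $p \in (1,\infty)$:
\begin{enumerate}
\item[$(i)$] Proposition \ref{prop:reflexivity} gives that $\banX$ is reflexive.
\item[$(ii)$] Proposition \ref{prop:tree-dist}, to be proven in the next section by an argument directly parallel to Proposition \ref{prop:reflexivity} applied to a putative equi-bi-Lipschitz embedding of $\tree^\omega_k$, yields $\sup_{k\ge 1}\cdist{\banX}(\tree^\omega_k)=\infty$.
\item[$(iii)$] Theorem \ref{thm:BKL}, applied to the now-reflexive $\banX$ with unbounded $\tree^\omega_k$-distortion, furnishes an equivalent norm on $\banX$ that is both asymptotically uniformly convex and asymptotically uniformly smooth.
\item[$(iv)$] Theorem \ref{thm:renorming} then produces an equivalent norm on $\banX$ with property $(\beta_p)$ for some $p \in (1,\infty)$, establishing (1).
\end{enumerate}

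The main obstacle is making $(2) \Rightarrow (3)$ direct from the inequalities alone; routing through $(1)$ via the renorming framework is the clean workaround, and it is this renorming pathway through Theorems \ref{thm:BKL} and \ref{thm:renorming} that does the real work in the equivalence.
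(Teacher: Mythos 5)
Your proof is correct and follows essentially the same path as the paper: both use Corollary \ref{cor:betap->umbelp} for $(1)\Rightarrow(2)$, Remark \ref{rem:relaxed-p-umbel} for $(1)\Rightarrow(3)$, and close the cycle by extracting reflexivity (Proposition \ref{prop:reflexivity}) and unbounded countably-branching-tree distortion (Proposition \ref{prop:tree-dist}) from either $(2)$ or $(3)$, then invoking Theorems \ref{thm:BKL} and \ref{thm:renorming}. Your observation that the prefactor $2^{-(k-1-s)}$ in \eqref{eq:umbel-p-convex} blocks a term-by-term derivation of $(3)$ from $(2)$, and that routing through $(1)$ is the clean workaround, is well-placed; it matches what the paper actually does, since the James-type map of Lemma \ref{lem:James} gives a uniform lower bound on each $(s,t)$-summand, so Proposition \ref{prop:reflexivity} (and the analogous tree-distortion bound) applies equally under hypothesis $(2)$ or $(3)$ without ever needing the implication $(2)\Rightarrow(3)$ on its own.
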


Following Ostrovskii \cite{Ostrovskii14b}, we say that a class $\mathscr C$ of Banach spaces admits a submetric test-space characterization if there exists a metric space $\metX$ and a marked subset $S\subset \metX\times \metX$ such that $\banY\notin \mathscr C$ if and only if $\metX$ admits a \emph{partial bi-Lipschitz embedding} into $\banY$, i.e. there exists a constant $D\ge 1$ such that for all $(x,y)\in S$, $\dX(x,y)\le \norm{f(x)-f(y)}_\banY\le D\dX(x,y)$.

 Below is the submetric test-space characterization of the class $\langle (\beta) \rangle$ mentioned in the introduction.

\begin{coro}\label{cor:submetric}
A Banach space $\banX$ does not admit an equivalent norm with property $(\beta)$ if and only if there exist a constant $A>0$ and a $1$-Lipschitz map $g\colon ([\bN]^{<\omega},\sd_\tree)\to \banX$ such that for all $\ubar=(n_1,\dots,n_{s},n_{s+1},\dots,n_{s+k})$ and  $\bar{v}=(n_1,\dots,n_{s},m_1,\dots, m_k)$ where $n_1<\dots<n_s<n_{s+1}<\dots<n_{s+k}<m_1<\dots<m_k$,
\begin{equation*}
\norm{g(\ubar)-g(\bar{v})}_\banX \ge \frac{1}{A} \sd_\tree(\ubar,\bar{v}).
\end{equation*}
\end{coro}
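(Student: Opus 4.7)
My plan is to prove both directions of the equivalence separately. The reverse implication ``existence of $g$ implies $\banX\notin <(\beta)>$'' is the more direct one and leans on the umbel-convexity machinery encapsulated in Corollary \ref{cor:metchar}. The forward implication splits into a reflexive and a non-reflexive subcase, with the former being the main technical burden.

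For the reverse direction, I would argue by contradiction: assume $g$ satisfies the stated lower bound and that $\banX$ admits an equivalent norm with property $(\beta)$. By Theorem \ref{thm:renorming} this means $\banX$ admits property $(\beta_p)$ for some $p\in(1,\infty)$, so Corollary \ref{cor:metchar} produces a constant $C$ for which the relaxation \eqref{eq:relaxed-umbel-p} of the umbel $p$-convexity inequality holds. Apply \eqref{eq:relaxed-umbel-p} to the $1$-Lipschitz restriction $g|_{\wtree{\bN}{\le 2^k}}$: for each $s\in\{1,\ldots,k-1\}$ and each $\nbar,\bar{\delta},\bar{\eta}$ appearing in the $s$-th summand, once $j>\delta_{2^s}$ the pair $((\nbar,\bar{\delta}),(\nbar,j,\bar{\eta}))$ consists of two elements of $\wtree{\bN}{2^k}$ fitting precisely the ``horizontally separated'' form of the Corollary, with tree distance $2^{s+1}$. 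Hence the hypothesis on $g$ bounds each summand below by $(2/A)^p$, yielding a left-hand side of at least $(k-1)(2/A)^p$; this contradicts the right-hand side bound $C^p$ for $k$ sufficiently large.

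For the forward direction, I would split cases according to Theorem \ref{thm:renorming}: if $\banX\notin <(\beta)>$, then either $\banX$ is non-reflexive, or $\banX$ is reflexive but admits no asymptotically uniformly convex and asymptotically uniformly smooth equivalent norm. In the non-reflexive case, Lemma \ref{lem:James} delivers the required $g$ outright (for instance with $\theta=\tfrac12$ and $A=6$). In the reflexive case, Theorem \ref{thm:BKL} furnishes a constant $D$ and a family of $D$-bi-Lipschitz maps $f_k\colon \tree^\omega_k\to\banX$; using reflexivity (Banach--Alaoglu plus a diagonal extraction on the countable set $[\bN]^{<\omega}$) I would pass to a subsequence converging pointwise weakly and define $g$ as the pointwise weak limit, with the $1$-Lipschitz upper bound inherited via weak lower semicontinuity of the norm.

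The main obstacle will be recovering the lower bound on $\|g(\ubar)-g(\bar v)\|_\banX$ in the reflexive subcase, because weak limits preserve upper bounds on norms but not lower bounds. The fix is to select the subsequence more carefully so that, for pairs of the horizontally separated form, the distances $\|f_k(\ubar)-f_k(\bar v)\|_\banX$ do not merely remain bounded below uniformly in $k$ but are in fact realized in norm by $g(\ubar)-g(\bar v)$. This can be arranged by a spreading-model-style asymptotic selection in the reflexive space $\banX$, or equivalently by importing the refined construction of Dilworth--Kutzarova--Randrianarivony \cite{DKR16} alluded to in the introduction, which produces exactly such a partial bi-Lipschitz embedding in the reflexive but not $(\beta)$-renormable setting.
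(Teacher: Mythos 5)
Your reverse direction and the non-reflexive forward subcase match the paper exactly: the paper likewise reduces the reverse implication to Proposition~\ref{prop:reflexivity} via \eqref{eq:relaxed-umbel-p}, and uses Lemma~\ref{lem:James} in the non-reflexive case. The gap is in the reflexive forward subcase, and you have correctly located it but not filled it. Passing to a pointwise weak limit $g$ of the $f_k$ via reflexivity and a diagonal extraction yields the upper Lipschitz bound through weak lower semicontinuity, but it destroys the lower bound: weak convergence gives no control of $\|g(\bar u)-g(\bar v)\|$ from below, and in a reflexive space one cannot in general upgrade weak convergence to norm convergence by ``selecting the subsequence more carefully.'' Your proposed repair is gestured at (``spreading-model-style asymptotic selection'' or ``import DKR16'') but neither is carried out, so the argument as written does not close.

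The paper avoids the difficulty entirely by invoking a stronger fact from \cite{BKL10}: a reflexive Banach space that does not admit an equivalent AUC and AUS norm actually contains a bi-Lipschitz copy of the \emph{infinite-height} countably branching tree $([\bN]^{<\omega},\sd_\tree)$, not merely uniformly bi-Lipschitz copies of the finite-height trees $\tree_k^\omega$. Given a single $L$-bi-Lipschitz embedding $\phi\colon ([\bN]^{<\omega},\sd_\tree)\to\banX$, the rescaled map $g=\phi/L$ is $1$-Lipschitz and satisfies the required lower bound with $A=L^2$ on every pair (not just the horizontally separated ones), so the condition in Corollary~\ref{cor:submetric} holds a fortiori. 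This is the clean route and it sidesteps the weak-limit lower-bound problem altogether. If you insist on building $g$ from the finite embeddings $f_k$, you need the nontrivial ``gluing'' or self-improvement machinery already present in \cite{BKL10} (or the careful spreading/asymptotic argument of \cite{DKR16}), not a naive weak limit; simply appealing to ``reflexivity plus diagonalization'' is insufficient.
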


\begin{proof}
Assume that $\banX$ does not admit an equivalent norm with property $(\beta)$. If $\banX$ is reflexive, then by \cite{BKL10} it contains a bi-Lipschitz embedding of $([\bN]^{<\omega},\sd_\tree)$, the countably branching tree of infinite height, and the condition is clearly satisfied.
If $\banX$ is not reflexive, then we can take the map from Lemma \ref{lem:James}. Assuming now that $\banX$ admits an equivalent norm with property $(\beta)$, then we can assume that $\banX$ supports the inequality \eqref{eq:relaxed-umbel-p} for some $p\in(1,\infty)$. It remains to observe that the proof of Proposition \ref{prop:reflexivity} shows that there cannot exist an $\banX$-valued map satisfying the conditions listed in the statement of Corollary \ref{cor:submetric}.
\end{proof}

\section{Distortion and compression rate of embeddings of countably branching trees}
\label{sec:distortion}

As we hinted at in the previous section, umbel convexity and its relaxation \eqref{eq:relaxed-umbel-p} are obstructions to the faithful embeddability of the countably branching tree. In fact, if we are only concerned with embeddability obstructions, a further relaxation of \eqref{eq:relaxed-umbel-p}, namely the infrasup-umbel $p$-convexity inequality as defined in Definition \ref{def:umbelcotypep}, is sufficient. Recall that $\metXd$ is infrasup-umbel $p$-convex if there exists a constant $C>0$ such that for all $k\ge 1$ and all $f\colon \tree^\omega_{2^k}\to \met X$,

\begin{align}\label{eq:umbelcotypep}
\left(\sum_{s=1}^{k-1}\inf_{\nbar\in\wtree{\bN}{\le 2^{k}-2^s}}\inf_{i \neq j \in \bN} \inf_{\stackrel{\mbar,\mbar' \in \wtree{\bN}{2^s-1} \colon }{(\nbar,i,\mbar),(\nbar,j,\mbar')\in \wtree{\bN}{\le 2^k}}}\frac{\dX(f(\nbar,i,\mbar),f(\nbar,j,\mbar'))^p}{2^{sp}}\right)^{\frac{1}{p}}\le C\lip(f).
\end{align}
We will denote by $\Pi^{isu}_p(\metX)$ the least constant for which \eqref{eq:umbelcotypep} holds for all $k \ge 1$ and all maps $f\colon \tree^\omega_{2^k}\to \met X$.

Consider the following further relaxation of the $p$-umbel inequality,  which we will refer to as the \emph{infrasup $p$-umbel inequality}. For all $w,z\in \metX$ and $\{x_n\}_{n\in \bN}\subseteq \metX$,
\begin{equation}
\label{eq:superrelaxed-p-umbel}
\frac{1}{2^p}\inf_{n\in \bN} \dX(w,x_n)^p+\frac{1}{K^p}\inf_{i\neq j\in\bN}\dX(x_i,x_j)^p\le \max\{\dX(w,z)^p, \sup_{n\in\bN} \dX(x_n,z)^p\}.
\end{equation}
If $p \in [1,\infty)$ and $\banXn$ has property $(\beta_p)$, then the metric induced by the norm on $\banX$ satisfies inequality \eqref{eq:superrelaxed-p-umbel}. Moreover, infrasup-umbel $p$-convexity can be derived from the infrasup $p$-umbel inequality (here as well the proof works for quasi-metrics).

It is easily verified that the inequalities \eqref{eq:umbel-p-convex}, \eqref{eq:relaxed-umbel-p}, and \eqref{eq:umbelcotypep} generate metric invariants, in the sense that $\Pi_p(\metX)\le \cdist{\metY}(\metX)\Pi_p(\metY)$ (and similarly for the other two inequalities). Note also that $\Pi_p^{isu}(\metX)\le \Pi_p^u(\metX)$. The terminology ``infrasup-umbel convexity'' is reminiscent of the terminology infratype and sup-cotype (see \cite{Pisier74}, \cite{Talagrand92}, \cite{Talagrand02}). The notion of infratype is obtained by relaxing the Rademacher type inequality by taking a minimum (instead of an average) over sign choices. Similarily for the notion of sup-cotype a maximum over sign choices replaces the traditional average. In our case, we replace the averages over levels by an infimum on the left-hand side and by a supremum on the right-hand side of \eqref{eq:umbel-p-convex}. First of all, it is obvious that infrasup-umbel $p$-convexity $p$ implies infrasup-umbel $q$-convexity for every $q>p$. Also, if the $\ell_p$-sum in \eqref{eq:superrelaxed-p-umbel} is replaced with an $\ell_\infty$-sum then the resulting inequality follows from the triangle inequality and holds in any metric space. We will then say that $\metX$ has \emph{non-trivial infrasup-umbel convexity} if it has infrasup-umbel convexity $p$ for some $p<\infty$.

We now record a lower bound on the infrasup-umbel convexity $p$ constant and on the distortion of countably branching trees.

\begin{prop}
\label{prop:tree-dist}
For all $p\in(0,\infty)$, $\Pi^{isu}_p \big( \tree_{2^k} \big)\ge 2(k-1)^{1/p} $, and hence $\cdist{\metY}\big(\tree_{k}\big)=\Omega \big( (\log k)^{1/p} \big)$
for every infrasup-umbel $p$-convex metric space $\metYd$.
\end{prop}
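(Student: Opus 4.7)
The plan is to prove the lower bound by testing the umbel cotype $p$ inequality on the simplest possible witness: the identity map $\id \colon \tree_{2^k}^\omega \to \tree_{2^k}^\omega$, which is manifestly $1$-Lipschitz with respect to the tree metric $\sd_\tree$. Because the umbel cotype $p$ constant is (as noted in the paragraph preceding the proposition) a bi-Lipschitz invariant satisfying $C_p^u(\metX) \le \cdist{\metY}(\metX)\cdot C_p^u(\metY)$, the desired distortion lower bound will then follow automatically.

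The key combinatorial observation is that, for any fixed $s$ and any admissible choice of $\nbar \in \wtree{\bN}{\le 2^k - 2^s}$, of distinct indices $i \neq j \in \bN$, and of $\mbar, \mbar' \in \wtree{\bN}{2^s-1}$ for which $(\nbar,i,\mbar)$ and $(\nbar,j,\mbar')$ both belong to $\wtree{\bN}{\le 2^k}$, the two vertices $(\nbar,i,\mbar)$ and $(\nbar,j,\mbar')$ lie at the same height $|\nbar| + 2^s$ in $\tree_{2^k}^\omega$ and, since $i \neq j$, their nearest common ancestor is exactly $\nbar$. Hence
\[
\sd_\tree\bigl((\nbar,i,\mbar),(\nbar,j,\mbar')\bigr) = 2^s + 2^s = 2^{s+1},
\]
a value that is independent of all the parameters over which the three infima in \eqref{eq:umbelcotypep} are taken. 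Substituting into the umbel cotype $p$ sum evaluated at $\id$ yields
\[
\sum_{s=1}^{k-1}\frac{(2^{s+1})^p}{2^{sp}} = \sum_{s=1}^{k-1} 2^p = (k-1)\cdot 2^p,
\]
so taking $p$-th roots and dividing by $\lip(\id) = 1$ gives $C_p^u(\tree_{2^k}^\omega) \ge 2(k-1)^{1/p}$, which is the first assertion.

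For the distortion conclusion, if $\metYd$ has umbel cotype $p$ then $C_p^u(\metY) < \infty$, and the bi-Lipschitz invariance gives
\[
\cdist{\metY}\bigl(\tree_{2^k}^\omega\bigr) \ge \frac{C_p^u(\tree_{2^k}^\omega)}{C_p^u(\metY)} \ge \frac{2(k-1)^{1/p}}{C_p^u(\metY)}.
\]
Writing an arbitrary $n$ as $n \ge 2^{\lfloor \log_2 n \rfloor}$ and noting that $\tree_m^\omega$ embeds isometrically into $\tree_n^\omega$ whenever $m \le n$, this extends from powers of $2$ to all $n$, producing the $\Omega((\log n)^{1/p})$ bound.

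I do not anticipate a real obstacle here: once one unwinds the notation $\wtree{\bN}{h}$ and confirms that the tree distance between two branches sharing the prefix $\nbar$ really is twice the length of either branch after $\nbar$, the proof is a one-line computation. The only minor care needed is verifying that admissible tuples $\mbar,\mbar'$ (satisfying the strict-ordering convention on indices) do exist — which is immediate because $\bN$ is infinite, so given any $\nbar$ and $i < j$ one can always complete to longer strictly increasing sequences of the required length within $\wtree{\bN}{\le 2^k}$.
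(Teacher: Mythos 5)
Your proof is correct and follows essentially the same approach as the paper: testing the umbel cotype inequality on the identity map, observing that two branches sharing prefix $\nbar$ but diverging at distinct first coordinates $i\neq j$ are at tree distance exactly $2^{s+1}$, and summing to get $(k-1)2^p$. The paper is slightly more terse (stating the distance as a $\ge$ bound rather than an equality, and leaving the passage from powers of $2$ to general $k$ implicit), but the substance is identical.
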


\begin{proof}
It suffices to consider $f$ to be the identity map on the tree. For all $\nbar\in \wtree{\bN}{\le 2^k-2^{s}}$, $i \neq j \in \bN$, and $\bar{\delta},\bar{\eta} \in \wtree{\bN}{2^{s}-1}$, it holds that
$\sd_\tree((\nbar, i, \bar{\delta}),(\nbar , j , \bar{\eta}))^p\ge 2^{(s+1)p}$, and thus
\begin{equation*}
\sum_{s=1}^{k-1}\inf_{\nbar\in\wtree{\bN}{\le 2^{k}-2^s}}\inf_{i \neq j \in \bN} \inf_{\stackrel{\bar{\delta},\bar{\eta}\in\wtree{\bN}{2^s-1}\colon}{(\nbar,i,\bar{\delta}),(\nbar,j,\bar{\eta})\in \wtree{\bN}{\le 2^k}}}\frac{\dY(f(\nbar,i,\bar{\delta}),f(\nbar,j,\bar{\eta}))^p}{2^{sp}}\ge (k-1)2^p,
\end{equation*}
which in turn implies that $\Pi^{isu}_p(\tree_{2^k}) \ge 2(k-1)^{1/p}$ since obviously $\lip(f)\le 1$.
\end{proof}

The lower bound above is known to be tight since $\ell_p$ has infrasup-umbel $p$-convex (it has property $(\beta_p)$) and it follows from Bourgain's tree embedding \cite{Bourgain86} (see also \cite{Matousek99} or \cite{BaudierZhang16}) that $\cdist{\ell_p} \big( \tree^\omega_k \big)=O \big( (\log k)^{1/p} \big)$. 

There are several instances where the saturation of a Poincar\'e-type inequality associated with the geometry of a graph implies the containment of a hardly distorted copy of the graph, most notably in the metric dichotomies regarding [BMW-metric type $\mid$ Hamming cubes] in \cite{BMW86}, [metric cotype $\mid$ $\ell_\infty$-discrete tori] in \cite{MN08}, and [diamond convexity $\mid$ diamond graphs] in \cite{EMN}. We refer to \cite{Mendel09} for a discussion of metric dichotomies. Even though in general there is no metric dichotomy (see \cite{MendelNaor13}) for tree metrics, it is likely possible that the saturation of deterministic Poincar\'e-type inequalities associated to binary trees (e.g. the binary tree convexity inequality in \cite[page 382]{LMN02}) induces embeddings with small distortion. In the setting of min/max Poincar\'e-type inequalities (as opposed to standard Poincar\'e-type inequalities which typically involve averages), the saturation argument is rather elementary to implement. For this purpose, observe that if we denote by $\Pi^{isu}_{p,k}(\metX)$ the least constant $C$ with which the infrasup-umbel $p$ inequality \eqref{eq:umbelcotypep} holds for all maps $f \colon \tree^\omega_{2^k} \to \metX$, then it follows from the triangle inequality that $\Pi^{isu}_{p,k}(\metX)\le 2(k-1)^{1/p}$. If for some map $f\colon \tree^\omega_{2^k}\to \met X$, the infrasup-umbel $p$ inequality is saturated, then $\metX$ will contain a barely distorted copy of $\tree^\omega_{2^k}$.

\begin{prop}
\label{prop:saturation}
Let $p\in(0,\infty)$. If $\Pi^{isu}_{p,k}(\metX) = 2(k-1)^{1/p}$ then $\cdist{\metX}\Big(\tree^\omega_{2^k}\Big)=1$. 
\end{prop}

\begin{proof}
Since we are assuming that $\Pi^{isu}_{p,k}(\metX) = 2(k-1)^{1/p}$, given any $\nu>0$ there is a map $f\colon \tree^\omega_{2^k}\to \met X$, such that 
	\begin{equation}
		\sum_{s=1}^{k-1}\inf_{\nbar\in\wtree{\bN}{\le 2^{k}-2^s}}\inf_{i \neq j \in \bN} \inf_{\stackrel{\bar{\delta},\bar{\eta}\in\wtree{\bN}{2^s-1}\colon}{(\nbar,i,\bar{\delta}),(\nbar,j,\bar{\eta})\in \wtree{\bN}{\le 2^k}}}\frac{\dX(f(\nbar,i,\bar{\delta}),f(\nbar,j,\bar{\eta}))^p}{2^{sp}} \ge (1-\nu)(k-1)2^p\lip(f)^p.
	\end{equation} 
For $s \in \{1, \dots, k-1\}$, $i \neq j\in\bN$, $\nbar\in\wtree{\bN}{\le 2^{k}-2^s}$, and $\bar{\delta},\bar{\eta}\in\wtree{\bN}{2^s-1}$ such that $(\nbar,i,\bar{\delta}),(\nbar,j,\bar{\eta})\in \wtree{\bN}{\le 2^k}$, it follows from the triangle inequality that 
	\begin{equation}\label{eq:wsataux1}
		\frac{\dX(f(\nbar,i,\bar{\delta}),f(\nbar,j,\bar{\eta}))^p}{2^{sp}} \le 2^p \lip(f)^p.
	\end{equation}
Therefore,
	\begin{equation}
		\frac{\dX(f(\nbar,i,\bar{\delta}),f(\nbar,j,\bar{\eta}))^p}{2^{sp}} \ge (1-\nu(k-1))2^p\lip(f)^p.
	\end{equation}
Since $(1-x)^{1/p}\ge 1-cx$ when $x\in(0,1)$ (take $c=1/p$ if $1/p>1$ and $c=1$ if $1/p\in(0,1]$), we have
	\begin{equation}\label{eq:wsataux2}
		\dX(f(\nbar,i,\bar{\delta}),f(\nbar,j,\bar{\eta})) \ge (1- c\nu (k-1))2^{s+1}\lip(f).
	\end{equation}
The combination of \eqref{eq:wsataux1} and \eqref{eq:wsataux2} gives that $f$ is a scaled-isometry (up to some small error) for pairs of vertices with equal height, i.e. of the form $(\nbar,i,\bar{\delta}),(\nbar,j,\bar{\eta})\in \wtree{\bN}{\le 2^k}$ where $i \neq j$. $\nbar\in\wtree{\bN}{\le 2^{k}-2^s}$, and $\bar{\delta},\bar{\eta}\in\wtree{\bN}{2^s-1}$ for some $s \in \{1, \dots, k-1\}$. More precisely, for such pairs of vertices we have $\sd_\tree((\nbar,i,\bar{\delta}),(\nbar, j, \bar{\eta})) = 2^{s+1}$ and 
	\begin{equation}\label{eq:wsataux3}
		 2^{s+1}\lip(f)-c\nu (k-1)2^{k}\lip(f) \le \dX(f(\nbar,i,\bar{\delta}),f(\nbar,j,\bar{\eta})) \le  2^{s+1}\lip(f).
	\end{equation}
It remains to estimate from below the distances between the images of an arbitrary pair of vertices. Let $\bar{u} \neq \bar{v} \in \tree^\omega_{2^k}$ and assume without loss of generality that we are in the following fork configuration:
\begin{figure}[H]
\label{fig:config}
\caption{Fork configuration}
\vskip 0.2cm
\hskip 0cm\includegraphics[scale=.5]{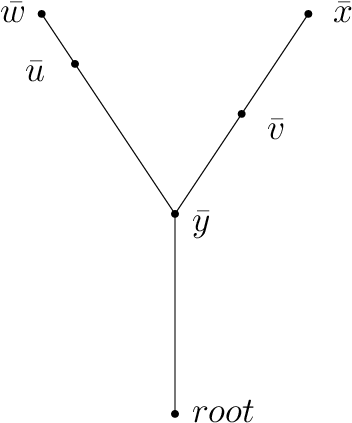}
\end{figure}

In the figure, $\bar{y}$ is the highest common ancestor of $\bar{u},\bar{v}$, and $\bar{w},\bar{x}$ are chosen so that $\sd_\tree(\bar{w},\bar{y}) = \sd_\tree(\bar{x},\bar{y})$ and both are even. We allow the possibility that $\bar{y}$ is the root, $\bar{y} = \bar{v}$, or $\bar{w} = \bar{u}$. We have
 	\begin{align*}
		\dX(f(\bar{u}),f(\bar{v})) & \ge \dX(f(\bar{w}),f(\bar{x})) - \dX(f(\bar{w}),f(\bar{u})) - \dX(f(\bar{v}),f(\bar{x}))\\
		& \stackrel{\eqref{eq:wsataux3}}{\ge} \sd_\tree(\bar{w},\bar{x})\lip(f)-c\nu(k-1)2^{k}\lip(f) - \sd_\tree(\bar{w},\bar{u})\lip(f) - \sd_\tree(\bar{v},\bar{x})\lip(f)\\
		& = \sd_\tree(\bar{u},\bar{v})\lip(f)-c\nu(k-1)2^{k}\lip(f)\\
		& \ge \sd_\tree(\bar{u},\bar{v})\lip(f)(1-c\nu(k-1)2^{k}).
	\end{align*}
Consequently, the distortion of $f$ is at most $\frac{1}{1-c\nu (k-1)2^{k}}$ which can be made as close to $1$ as we wished by choosing $\nu$ sufficiently small.
\end{proof}

The notion of infrasup-umbel convexity is not a coarse invariant, e.g. it was shown in \cite{BLS_JAMS} that the countably branching tree of infinite height embeds coarsely into every infinite-dimensional Banach space. However, it is a strong enough strengthening of the triangle inequality which provides estimates on the compression rate of coarse embeddings of countably branching trees. Having established that there are spaces which have non-trivial infrasup-umbel convexity, we can now derive Theorem \ref{thm:tree-compression} essentialy in the same way Tessera derived Theorem \ref{thm:Tessera} from inequality \eqref{eq:q-tess}. 

\begin{theo} 
\label{thm:tree-compression}
Let $p\in(0,\infty)$ and assume that there are non-decreasing maps $\rho,\omega\colon [0,\infty)\to [0,\infty)$ and for all $k\ge 1$ a map $f_k\colon \tree_{2^k}\to \metY$ such that for all $x,y\in \tree_{2^k}$,
\begin{equation*}
\rho(\sd_\tree(x,y))\le \dY(f_k(x),f_k(y))\le \omega(\sd_\tree(x,y)).
\end{equation*}
Then,
\begin{equation*}
\int_{1}^\infty \Big(\frac{\rho(t)}{t}\Big)^p\frac{dt}{t} \le \frac{2^p-1}{p}\Pi^{isu}_p(\metY)^p \omega(1)^p.
\end{equation*}
In particular, the compression rate of any equi-coarse embedding of $\{\tree_{m}\}_{m\ge 1}$ into an infrasup-umbel $p$-convex metric space satisfies  
\begin{equation}\label{eq:comp-bound}
\int_{1}^\infty \Big(\frac{\rho(t)}{t}\Big)^p\frac{dt}{t}<\infty.
\end{equation}
\end{theo}

\begin{proof}
Assume that $\metYd$ infrasup-umbel $p$-convex and let $C=\Pi^{isu}_p(\metY)$. Then,

\begin{align*}
\sum_{s=1}^{k-1} & \inf_{\nbar\in\wtree{\bN}{\le 2^{k}-2^s}}\inf_{i \neq j \in \bN} \inf_{\stackrel{\bar{\delta},\bar{\eta}\in\wtree{\bN}{2^s-1}\colon}{(\nbar,i,\bar{\delta}),(\nbar,j,\bar{\eta})\in \wtree{\bN}{\le 2^k}}}\frac{\dY(f_k(\nbar,i,\bar{\delta}),f_k(\nbar,j,\bar{\eta}))^p}{2^{sp}} \ge\\
\sum_{s=1}^{k-1} & \inf_{\nbar\in\wtree{\bN}{\le 2^{k}-2^s}}\inf_{i \neq j \in \bN} \inf_{\stackrel{\bar{\delta},\bar{\eta}\in\wtree{\bN}{2^s-1}\colon}{(\nbar,i,\bar{\delta}),(\nbar,j,\bar{\eta})\in \wtree{\bN}{\le 2^k}}}\frac{\rho(\sd_\tree((\nbar,i,\bar{\delta}) , (\nbar,j,\bar{\eta}) ) )^p}{2^{sp}} \ge
\sum_{s=1}^{k-1}\frac{\rho(2^{(s+1)})^p}{2^{sp}},
\end{align*}

and hence it follows from \eqref{eq:umbelcotypep} and the upper coarse inequality that

\begin{equation*}
\sum_{s=1}^{k-1}\frac{\rho(2^{s})^p}{2^{sp}} \le C^p \omega(1)^p.
\end{equation*} 

But, 

\begin{eqnarray*}
\int_{2^{s-1}}^{2^{s}} \frac{\rho(t)^p}{t^{p}}\frac{dt}{t} & \le & \rho(2^{s})^p \int_{2^{s-1}}^{2^{s}}\frac{dt}{t^{p+1}}= \rho(2^{s})^p\frac{2^{-(s-1)p}-2^{-sp}}{p} =  \frac{2^p-1}{p}\frac{\rho(2^{s})^p}{2^{sp}},
\end{eqnarray*}

and hence 

\begin{equation*}
\int_{1}^{2^{k-1}}\frac{\rho(t)^p}{t^{p+1}}dt=\sum_{s=1}^{k-1}\int_{2^{s-1}}^{2^{s}} \frac{\rho(t)^p}{t^{p}}\frac{dt}{t}\le \frac{2^p-1}{p} \sum_{s=1}^{k-1}\frac{\rho(2^{s})^p}{2^{sp}}\le \frac{2^p-1}{p} C^p \omega(1)^p<\infty. 
\end{equation*}
\end{proof}

It is well known that Banach spaces of the form $(\sum_{n=1}^\infty \banF_n)_{\ell_p}$, where $p\in (1,\infty)$ and $\{\banF_n\}_{n\ge 1}$ is a sequence of finite-dimensional spaces, have property $(\beta_p)$ (see \cite[Proposition 5.1]{DKLR14}), and thus they are infrasup-umbel $p$-convex and Theorem \ref{thm:tree-compression} applies. No bounds such as \eqref{eq:comp-bound} were previously known for the countably branching trees, even for those simple Banach spaces.

An interesting application to hyperbolic geometry is the following. It is well known that the infinite binary tree admits a bi-Lipschitz embedding into the hyperbolic plane $\mathbf{H}^2$ (an almost isometric embedding of every finite weighted tree can be found in \cite{Sarkar12}). It follows from \cite[Theorem 1.1 (ii)]{BIM05} that $\tree_\infty^\omega$ admits a bi-Lipschitz embedding into the hyperbolic space $\mathbf{H}^\infty$ of countably infinite dimension. The importance of studying infinite-dimensional hyperbolic spaces was put forth by Gromov in \cite[Section 6]{Gromov93}. The geometry of binary trees, via either Markov convexity or Bourgain's metric characterization, can be used to show that $\cdist{\ell_2}\big(\mathbf{H^\infty}\big)\ge \cdist{\ell_2}\big(\mathbf{H^2}\big)=\infty$. Since $\xbl$ contains a bi-Lipschitz copy of $\bin{\infty}$, the geometry of binary trees does not provide any obstruction in such spaces. Nevertheless, resorting to the geometry of countably branching trees we can conclude that $\cdist{\banY}\big(\mathbf{H^\infty}\big)=\infty$ when $\banY$ is any Banach space of the form $(\sum_{n=1}^\infty \banF_n)_{\ell_p}$, where $p\in(1,\infty)$. Similar arguments give restrictions on the coarse compression rate for the infinite-dimensional hyperbolic space.

\begin{coro}\label{cor:hyperbolic}
Let $\mathbf{H^\infty}$ be the infinite-dimensional hyperbolic space and $\metY$ be an infrasup-umbel $p$-convex metric space with $p\in(0,\infty)$. Then, the compression rate of any coarse embedding of $\mathbf{H^\infty}$ into $\metY$ satisfies 
		\begin{equation*}
\int_{1}^\infty \Big(\frac{\rho(t)}{t}\Big)^p\frac{dt}{t}<\infty.
\end{equation*}
In particular, $\cdist{\metY}\big(\mathbf{H^\infty}\big)=\infty$.
\end{coro}

The tightness of Theorem \ref{thm:tree-compression} follows from \cite[Theorem 7.3]{Tessera08}. It turns out that Bourgain's tree embedding, which takes value into $\ell_p$-spaces, can be extended to target spaces containing $\ell_p$ in some asymptotic fashion. It is rather straightforward to show that $\cdist{\banY}(\tree^\omega_k)=O((\log k)^{1/p})$ if $\banY$ has an $\ell_p$-spreading model generated by a weakly-null sequence. To show that the same bound holds for the larger class of Banach spaces admitting an $\ell_p$-asymptotic model  generated by a weakly-null array requires a bit more care and a recent observation from \cite{BLMS_IJM}. Our embedding is an adjustment of Bourgain's tree embedding, but in this context new complications arise when estimating the co-Lipschitz constant.

We refer to \cite{BLMS_JIMJ} for a discussion of the relationship between spreading models, asymptotic models, and asymptotic structure. Here it suffices to say that $\banY$ has an $\ell_p$-asymptotic model generated by a weakly-null array if there exists a normalized weakly-null array $\big(y^{(i)}_j:i,j\kin\bN\big)$  in $\banY$ such that for all $k\in\bN$ and $\delta>0$, we may pass to appropriate subsequences of the array so that for any $k\le j_1<\cdots<j_k$ and any $a_1,\ldots,a_k$ in $[-1,1]$ we have
\begin{equation}
\label{eq:am}
\Bigg|\Big\|\sum_{i=1}^ka_ix^{(i)}_{j_i}\Big\| - \Big(\sum_{i=1}^k\abs{a_i}^p\Big)^{\frac{1}{p}}\Bigg| < \delta.
\end{equation}
The extreme cases in the proposition below extend prior results obtained in \cite{BLS_JAMS} for spreading models.
 
\begin{prop}\label{prop:am-dist}
If $\banY$ has an $\ell_p$-asymptotic model generated by a weakly-null array for some $p\in(1,\infty)$, then
\begin{equation*}
\cdist{\banY}(\tree^\omega_k)=O\Big((\log k)^{\frac{1}{p}}\Big).
\end{equation*}
If $\banY$ has an $\ell_1$-asymptotic model or a $\co$-asymptotic model generated by a weakly-null array  then $\sup_{k\in\bN} \cdist{\banY}\big(\tree_k^\omega\big)<\infty$.
\end{prop}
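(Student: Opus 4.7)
The plan is to adapt the embedding of countably branching trees into $\ell_p$ from \cite{BaudierZhang16} so that the target is the given $\banY$ and the canonical basis of $\ell_p$ is replaced by elements of the asymptotic-model array $\bigl(y^{(i)}_j\bigr)$. As a preliminary, fix the tree height and a small parameter $\delta>0$; by the definition of an asymptotic model one may pass to a subarray so that the approximate $\ell_p$-identity \eqref{eq:am} holds uniformly for all $N$ of order the height, all coefficients in $[-1,1]^N$, and all admissible column indices $N\le j_1<\cdots<j_N$. The embedding itself is
\[
f(\nbar)\;=\;\sum_{i=1}^{|\nbar|} y^{(i)}_{\psi_i(n_1,\dots,n_i)},
\]
where each $\psi_i\colon\wtree{\bN}{i}\to\bN$ is an injective level-$i$ labeling whose range is pushed beyond the threshold imposed by \eqref{eq:am}. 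The upper Lipschitz bound follows by splitting $f(\ubar)-f(\bar v)$ along the common-ancestor path into two admissible sums of the form required by \eqref{eq:am}, which gives $\|f(\ubar)-f(\bar v)\|\le (1+\delta)\bigl(2\sd_\tree(\ubar,\bar v)\bigr)^{1/p}$.

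The main obstacle is the matching co-Lipschitz bound. When the common-prefix length $c$ is strictly smaller than both $|\ubar|$ and $|\bar v|$, each row $i\in\{c+1,\dots,\min(|\ubar|,|\bar v|)\}$ contributes to $f(\ubar)-f(\bar v)$ two oppositely signed terms whose column indices differ, a configuration not directly covered by \eqref{eq:am}. To circumvent this, the plan is to invoke the observation from \cite{BLMS_IJM}: after a further diagonal extraction within each row, paired terms of this form admit a weakly-null $\ell_p$-type lower estimate which, combined with the cross-row identity \eqref{eq:am}, yields $\|f(\ubar)-f(\bar v)\|\ge c_p\,\sd_\tree(\ubar,\bar v)^{1/p}$ for a constant $c_p>0$ depending only on $p$ and the asymptotic-model parameters. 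Feeding these bounds into the logarithmic-depth framework of \cite{BaudierZhang16} then delivers $\cdist{\banY}(\tree^\omega_{2^k})\le C_p\, k^{1/p}$, which is the announced estimate $\cdist{\banY}(\tree^\omega_k)=O((\log k)^{1/p})$ after reparameterization by the height.

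For the $\ell_1$- and $c_0$-asymptotic model cases the same embedding (with a minor weighted modification in the $c_0$ case so that adjacent and distant pairs yield norms of comparable size) uses the approximate identities $\|\sum_i a_iy^{(i)}_{j_i}\|\approx \sum_i|a_i|$ and $\|\sum_i a_iy^{(i)}_{j_i}\|\approx \max_i|a_i|$, respectively. Both make the Lipschitz and co-Lipschitz constants scale comparably in $\sd_\tree$, producing a distortion bound independent of $k$ and hence $\sup_{k\in\bN}\cdist{\banY}(\tree^\omega_k)<\infty$.
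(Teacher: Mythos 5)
Your embedding map is missing the essential weights. You define
\[
f(\nbar)=\sum_{i=1}^{|\nbar|} y^{(i)}_{\psi_i(n_1,\dots,n_i)},
\]
an unweighted sum, and your own estimates then give $\|f(\ubar)-f(\bar v)\|\asymp\sd_\tree(\ubar,\bar v)^{1/p}$. That is a bi-Lipschitz embedding of the $1/p$-\emph{snowflake} $(\tree^\omega_k,\sd_\tree^{1/p})$, not of the tree itself. Comparing it to $\sd_\tree$ directly: the ratio $\|f(\ubar)-f(\bar v)\|/\sd_\tree(\ubar,\bar v)\asymp\sd_\tree^{1/p-1}$ ranges from $\asymp 1$ on edges down to $\asymp k^{1/p-1}$ on antipodal pairs, so the resulting distortion against $\sd_\tree$ is of order $k^{1-1/p}$, far worse than $(\log k)^{1/p}$ when $p>1$. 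There is no ``logarithmic-depth framework'' in \cite{BaudierZhang16} or anywhere else that upgrades a snowflake embedding into a tree embedding; the logarithmic loss in Bourgain-type embeddings is precisely a byproduct of building the weights into the map itself. The paper's map is
\[
f(n_1,\dots,n_j)=\sum_{i=0}^j(j-i+1)^{1/q}\, y^{(i)}_{\Phi(n_1,\dots,n_i)},\qquad \tfrac1p+\tfrac1q=1,
\]
and the weights $(j-i+1)^{1/q}$ are what make the Lipschitz constant $O((\log k)^{1/p})$ while keeping the co-Lipschitz constant $\Omega(1)$. Note that the two special cases you handle at the end ($p=1$ unweighted, $p=\infty$ linearly weighted) are the endpoints of exactly this weight scheme: $1/q=0$ and $1/q=1$. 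The fact that the unweighted map happens to work at $p=1$ may be the source of the confusion, but it does not extend to $p\in(1,\infty)$.

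A secondary difference: for the lower bound the paper does not estimate ``paired cross-row terms'' directly. It invokes Lemma 3.8 of \cite{BLMS_IJM} to secure $(1+\delta)$-\emph{suppression unconditionality} of the array along the relevant columns, which lets one simply discard the common-ancestor contribution and the shorter branch, leaving a single admissible sum controlled by \eqref{eq:am}. If you keep the unweighted map you would still face the cross-row pairing issue; with the weighted map plus suppression unconditionality, the issue disappears cleanly. You should adopt the weighted map and the suppression-unconditionality argument rather than the route you sketched.
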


\begin{proof}
Let $k\ge 1$ and fix a compatible bijection $\Phi\colon \wtree{\bN}{\le k}\to \{2k,2k+1,\dots\}$, meaning $\Phi((n_1,n_2,\dots, n_\ell)) \leq \Phi((n_1,n_2,\dots, n_\ell,n_{\ell+1}))$ for all $(n_1,n_2,\dots, n_\ell,n_{\ell+1}) \in [\bN]^{\leq k}$. In addition to \eqref{eq:am} and by applying \cite[Lemma 3.8]{BLMS_IJM}, we may also assume that for any $i_1,\ldots,i_{2k}$ in $\{1,\ldots,k\}$ and any pairwise different $l_1,\ldots,l_{2k}$ in $\bN$, the sequence $(y^{(i_j)}_{l_j})_{j=1}^{2k}$ is $(1+\delta)$-suppression unconditional for $\delta>0$ arbitrarily small. Define a Bourgain-style map $f\colon (\wtree{\bN}{\le k},\sd_\tree)\to \banY$ by 
\begin{equation*}
f(n_1,\dots,n_j)=\sum_{i=0}^j(j-i+1)^{\frac{1}{q}} y^{(i)}_{\Phi(n_1,\dots,n_i)},
\end{equation*}
where $\frac 1q+\frac 1p=1$, and where it is understood that for $i=0$, $y^{(i)}_{\Phi(n_1,\dots,n_i)}=y^{(i)}_{\Phi(\emptyset)}$.
Consider $\nbar,\mbar \in \wtree{\bN}{\le k}$ such that $\nbar=(\ubar,n_1,\dots,n_j)$ and $\mbar=(\ubar,m_{1},\dots, m_{h})$ for some $\ubar\in\wtree{\bN}{s}$ with $s\le k-j$ and $j\ge h$. Then,
\begin{align*}
\norm{f(\nbar)-f(\mbar)}_\banY & = \bnorm{\sum_{i=0}^s \underbrace{\Big((s+j-i+1)^{\frac{1}{q}}-(s+h-i+1)^{\frac{1}{q}}\Big)}_{\alpha_i}y^{(i)}_{\Phi(u_1,\dots, u_i)}+\\
 					&   \sum_{i=1}^j \underbrace{\Big(s+j-(s+i)+1\Big)^{\frac{1}{q}}}_{\beta_i}y^{(s+i)}_{\Phi(\ubar, n_1,\dots, n_i)}-\sum_{i=1}^h \underbrace{\Big(s+h-(s+i)+1\Big)^{\frac{1}{q}}}_{\gamma_i} y^{(s+i)}_{\Phi(\ubar, m_1,\dots, m_i)}}_\banY\\
					& \le \bnorm{\sum_{i=0}^s\alpha_iy^{(i)}_{\Phi(u_1,\dots, u_i)}}_\banY + \bnorm{\sum_{i=1}^j \beta_i y^{(s+i)}_{\Phi(\ubar, n_1,\dots, n_i)}}_\banY+\bnorm{\sum_{i=1}^h \gamma_i y^{(s+i)}_{\Phi(\ubar, m_1,\dots, m_i)}}_\banY.
\end{align*}
Recall that for all $y>x>0$ and $a\in(0,1)$,
\begin{equation}\label{eq:useful-ineq2}
y^{a}-x^{a}\le \frac{y-x}{y^{1-a}}.
\end{equation}
Observe now that $\max\{s,s+j,s+h\}\le k$ and since $\Phi$ is a compatible bijection taking values into $\{2k, 2k+1,\dots\}$ it follows from \eqref{eq:am} that
\begin{align*}
\bnorm{\sum_{i=0}^s\alpha_iy^{(i)}_{\Phi(u_1,\dots, u_i)}}_\banY & \le \Big(\sum_{i=1}^s \alpha_i^p\Big)^{\frac{1}{p}} +\delta \le \Big(\sum_{i=0}^s \Big((s+j-i+1)^{\frac{1}{q}}-(s+h-i+1)^{\frac{1}{q}}\Big)^p\Big)^{\frac{1}{p}} +\delta\\
		  & \stackrel{\eqref{eq:useful-ineq2}}{\le} \Big(\sum_{i=0}^s \Big( \frac{s+j-i+1-(s+h-i+1)}{(s+j-i+1)^{1-\frac{1}{q}}}\Big)^p\Big)^{\frac{1}{p}} +\delta\\
		  & \le \Big( \sum_{i=0}^s \frac{(j-h)^p}{s+j-i+1}\Big)^{\frac{1}{p}} +\delta= (j-h)\Big(\sum_{i=j+1}^{s+j+1} \frac{1}{i}\Big)^{\frac{1}{p}}+\delta= O_\delta((j-h)(\log k)^{\frac{1}{p}}).
\end{align*}
Also,
\begin{align*}
\bnorm{\sum_{i=0}^j\beta_i y^{(i)}_{\Phi(u,n_1,\dots, n_i)}}_\banY & \le \Big(\sum_{i=1}^j \beta_i^p\Big)^{\frac{1}{p}} +\delta \le  \Big(\sum_{i=1}^j (j-i+1)^{\frac{p}{q}}\Big)^{\frac{1}{p}}+\delta \\
			& = \Big(\sum_{i=1}^{j} i^{\frac{p}{q}}\Big)^{\frac{1}{p}}+\delta = O(j)+\delta\le O_\delta(j)
\end{align*}
and a similar computation gives 
\begin{equation*}
\bnorm{\sum_{i=1}^h \gamma_i y^{(s+i)}_{\Phi(\ubar, m_1,\dots, m_i)}}_\banY  = O_\delta(h).
\end{equation*}
Therefore, 
\begin{equation*}
\norm{f(\nbar)-f(\mbar)}_\banY \le  O_\delta((j+h)(\log k)^{\frac{1}{p}})=O_\delta((\log k)^{\frac{1}{p}})\sd_\tree(\nbar,\mbar).
\end{equation*}
For the lower bound, it follows from the suppression unconditionally condition that 
\begin{align*}
\norm{f(\nbar)-f(\mbar)}_\banY & \ge   \frac{1}{1+\delta}\bnorm{\sum_{i=1}^j \beta_i y^{(s+i)}_{\Phi(\ubar, n_1,\dots, n_i)}}_\banY \ge  \frac{1}{1+\delta}\Big(\sum_{i=1}^j \beta_i^p\Big)^{\frac{1}{p}} -\frac{\delta}{1+\delta}\\
 		& \ge  \frac{1}{1+\delta}\Big(\sum_{i=1}^j (j-i+1)^{\frac{p}{q}}\Big)^{\frac{1}{p}}-\frac{\delta}{1+\delta} \\
		&=  \frac{1}{1+\delta}\Big(\sum_{i=1}^{j} i^{\frac{p}{q}}\Big)^{\frac{1}{p}}-\frac{\delta}{1+\delta} = \frac{1}{1+\delta}\Omega(j)-\frac{\delta}{1+\delta} \ge \Omega_\delta(j),
\end{align*}
and similarly 
\begin{equation*}
\norm{f(\nbar)-f(\mbar)}_\banY\ge \Omega_\delta(h).
\end{equation*}
Therefore,
\begin{equation*}
\norm{f(\nbar)-f(\mbar)}_\banY \ge \Omega_\delta((j+h))=\Omega_\delta(\sd_\tree(\nbar,\mbar))
\end{equation*}
and the conclusion follows.

For the case $p=\infty$ the map $f$ takes the form $f(n_1,\dots,n_j)=\sum_{i=0}^j (j-i+1)y^{(i)}_{\Phi(n_1,\dots,n_i)}$ and the argument above gives a bounded distortion. In the case $p=1$, it can easily be verified that the map $f\colon (\wtree{\bN}{\le k},\sd_\tree)\to \banY$ given by 
\begin{equation*}
f(n_1,\dots,n_j)=\sum_{i=0}^j y^{(i)}_{\Phi(n_1,\dots,n_i)},
\end{equation*}
is a bi-Lipschitz embedding.
\end{proof}


\begin{coro} 
\label{cor:betap->lq-asm}
If $\banX$ is infrasup-umbel $p$-convex for some $p\in(1,\infty)$, then $\banX$ does not have any $\ell_q$-asymptotic model generated by a weakly-null array for any $q>p$.
\end{coro}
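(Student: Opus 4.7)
The plan is to derive a contradiction by combining two distortion estimates for countably branching trees that are already established in the excerpt: the lower bound from Proposition \ref{prop:tree-dist} and the upper bound from Proposition \ref{prop:am-dist}.

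First, I would use that umbel cotype $p$ is a bi-Lipschitz invariant, in the sense that $C_p^u(\metX)\le \cdist{\metY}(\metX)\cdot C_p^u(\metY)$; this is recorded just before Proposition \ref{prop:tree-dist} and is immediate from the form of \eqref{eq:umbelcotypep} by composing $f\colon \tree_{2^k}^\omega\to\metX$ with a near-optimal embedding $\metX\hookrightarrow \metY$ (the Lipschitz constant picks up a factor at most $\lip(h)$ while every distance $\dX(f(u),f(v))$ on the left is deflated by at most $\lip(h^{-1})^{-1}$). Taking $\metY=\banX$ and $\metX=\tree_{2^k}^\omega$, together with the lower bound $C_p^u(\tree_{2^k}^\omega)\ge 2(k-1)^{1/p}$ of Proposition \ref{prop:tree-dist}, this yields
\begin{equation*}
\cdist{\banX}\big(\tree_{2^k}^\omega\big)\;\ge\;\frac{2(k-1)^{1/p}}{C_p^u(\banX)}.
\end{equation*}
In particular, writing $m=2^k$, the $\banX$-distortion of $\tree_m^\omega$ grows at least like $(\log m)^{1/p}$.

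Next, I would argue by contradiction: suppose $\banX$ admits an $\ell_q$-asymptotic model generated by a weakly-null array for some $q\in(p,\infty)$. Proposition \ref{prop:am-dist} then provides the upper bound $\cdist{\banX}\big(\tree_m^\omega\big)=O\big((\log m)^{1/q}\big)$. Since $q>p$ gives $1/p-1/q>0$, we get $(\log m)^{1/p}/(\log m)^{1/q}=(\log m)^{1/p-1/q}\to\infty$ as $m\to\infty$, which is incompatible with the lower bound obtained in the previous step. This contradiction is the desired conclusion.

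No step is particularly delicate: both ingredients, the umbel-cotype-based distortion lower bound and the asymptotic-model-based distortion upper bound, are already in place in the excerpt, and the remainder is an elementary comparison of the two growth rates in $\log m$. The only item worth making explicit is the stated invariance inequality for $C_p^u$, which is a direct unpacking of \eqref{eq:umbelcotypep}.
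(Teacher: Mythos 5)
Your proof is correct and follows the same route as the paper: combine the distortion lower bound from Proposition \ref{prop:tree-dist} with the upper bound from Proposition \ref{prop:am-dist} to obtain a contradiction via the incompatible growth rates $(\log k)^{1/p}$ versus $(\log k)^{1/q}$. The only difference is that you spell out the bi-Lipschitz invariance inequality $C_p^u(\metX)\le \cdist{\metY}(\metX)\,C_p^u(\metY)$ explicitly, which the paper leaves implicit.
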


\begin{proof}
By Proposition \ref{prop:am-dist}, $\tree_k^\omega$ embeds into $\banX$ with distortion at most $O((\log k)^{1/q})$, but this impossible by Proposition \ref{prop:tree-dist}.
\end{proof}

\section{Stability under nonlinear quotients}
\label{sec:quotients}
Recall that a map $f\colon \metXd\to \metYd$ between metric spaces is called a \emph{Lipschitz quotient map}, and $\metY$ is simply said to be a \emph{Lipschitz quotient} of $\metX$, if there exist constants $L,C>0$ such that for all $x\in \metX$ and $r\in(0,\infty)$ one has
\begin{equation}\label{eq:lipquotient}
B_{\met Y}\Big(f(x),\frac{r}{C}\Big)\subset f(B_\metX(x,r))\subset B_{\met Y}(f(x),Lr).
\end{equation}
Note that the right inclusion in \eqref{eq:lipquotient} is equivalent to $f$ being Lipschitz with $\lip(f)\le L$. If the left inclusion in \eqref{eq:lipquotient} is satisfied, then $f$ is said to be \emph{co-Lipschitz}, and the infimum of all such $C$'s, denoted by $\colip(f)$, is called the co-Lipschitz constant of $f$. We define the \emph{codistortion} of a Lipschitz quotient map $f$ as $\codist(f)\eqd\lip(f)\cdot\colip(f)$. A metric space $\met Y$ is said to be a \emph{Lipschitz subquotient} of $\met X$ with codistortion $\alpha\in[1,\infty)$ (or simply $\met Y$ is an $\alpha$-Lipschitz subquotient of $\met X$) if there is a subset $Z\subset \met X$ and a Lipschitz quotient map $f\colon Z\to \met Y$ such that $\codist(f)\le\alpha$. We define the $\met X$-quotient codistortion of $\met Y$ as
\begin{equation*}
\mathsf{qc}_{\metX}(\met Y) \eqd \inf\{\alpha\colon \met Y\textrm{ is an $\alpha$-Lipschitz subquotient of } \metX\}.
\end{equation*}
We set $\mathsf{qc}_\metX(\met Y)=\infty$ if $\met Y$ is not a Lipschitz quotient of any subset of $\metX$.

As is the case for Markov $p$-convexity, umbel $p$-convexity and its relaxations are also stable under taking Lipschitz quotients. 

\begin{prop} \label{prop:lipsubquot}
Let $p\in(0,\infty)$ and $\metXd$ be a metric space that is umbel $p$-convex. If $\metY$ is a Lipschitz subquotient of $\met X$ then $\met Y$ is umbel $p$-convex. Moreover, $\Pi^u_p(X)\le \mathsf{qc}_{\metX}(Y)\Pi^u_p(\metX)$.
\end{prop}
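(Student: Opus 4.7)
The plan is to lift $\metY$-valued maps on the countably branching tree back to $Z$-valued maps via the Lipschitz quotient, invoke umbel $p$-convexity of $\metX$ on the lift, and push the resulting inequality back down to $\metY$. This mirrors the standard pullback argument used for Markov convexity in \cite[Prop.~4.1]{MendelNaor13}.

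Fix $\epsilon>0$ and let $f\colon Z\to\metY$ be a Lipschitz quotient map from a subset $Z\subset\metX$ onto $\metY$ with $\codist(f)\le\mathsf{qc}_{\metX}(\metY)+\epsilon$; write $L=\lip(f)$ and $C=\colip(f)$. Given any $g\colon\wtree{\bN}{\le 2^k}\to\metY$, the first step is to construct a lift $h\colon\wtree{\bN}{\le 2^k}\to Z$ satisfying $f\circ h=g$ by recursion on the level of the tree: pick $h(\emptyset)\in f^{-1}(g(\emptyset))$ arbitrarily, and having defined $h$ on level $\ell-1$, for each vertex $(n_1,\dots,n_\ell)\in\wtree{\bN}{\ell}$ use the left inclusion in \eqref{eq:lipquotient} to choose $h(n_1,\dots,n_\ell)\in f^{-1}(g(n_1,\dots,n_\ell))$ with
\[ \dX\bigl(h(n_1,\dots,n_{\ell-1}),h(n_1,\dots,n_\ell)\bigr)\le(1+\epsilon)\,C\cdot\dY\bigl(g(n_1,\dots,n_{\ell-1}),g(n_1,\dots,n_\ell)\bigr). \]
The co-Lipschitz condition permits such a selection simultaneously for each of the countably many children at every vertex, since the preimage choices are independent.

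Since $f\circ h=g$ and $f$ is $L$-Lipschitz, every quantity of the form $\dY(g(a),g(b))^p$ appearing inside the nested infima and liminf on the left-hand side of \eqref{eq:umbel-p-convex} for $g$ is bounded above by $L^p\dX(h(a),h(b))^p$, so termwise monotonicity of infima and liminfs yields that the left-hand side of \eqref{eq:umbel-p-convex} for $g$ is at most $L^p$ times the left-hand side for $h$. On the other hand, the construction of $h$ ensures that each parent-child distance on the right-hand side of \eqref{eq:umbel-p-convex} for $h$ is at most $(1+\epsilon)C$ times the corresponding distance for $g$, whence the right-hand side for $h$ is at most $((1+\epsilon)C)^p$ times the right-hand side for $g$. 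Applying umbel $p$-convexity of $\metX$ to $h$ and chaining these estimates gives that the left-hand side of \eqref{eq:umbel-p-convex} for $g$ is bounded by
\[ \bigl((1+\epsilon)LC\,\Pi^u_p(\metX)\bigr)^p\cdot(\text{RHS of \eqref{eq:umbel-p-convex} for }g)\le\bigl((1+\epsilon)(\mathsf{qc}_\metX(\metY)+\epsilon)\Pi^u_p(\metX)\bigr)^p\cdot(\text{RHS for }g). \]
Letting $\epsilon\to 0$ yields $\Pi^u_p(\metY)\le\mathsf{qc}_\metX(\metY)\,\Pi^u_p(\metX)$.

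I do not anticipate a serious obstacle. The only mild subtlety is that the left inclusion in \eqref{eq:lipquotient} is strict, so the optimal preimage need not exist and one must absorb a $(1+\epsilon)$ slack that vanishes in the limit. The infinite branching of the tree costs nothing, because the co-Lipschitz selection at each vertex is local and independent across children.
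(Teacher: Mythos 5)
Your proof is correct and follows exactly the route the paper intends: the paper defers to the more delicate Proposition~\ref{prop:gen-quotient} (which handles coarse/uniform quotients with additive errors $A,K$) and explicitly notes that the Lipschitz subquotient case is the simplification where $A=K=0$, making the recursive level-by-level lift and the two termwise monotonicity estimates you give precisely the argument being invoked. The $(1+\epsilon)$ slack you insert to handle the open-ball/infimum issue in the co-Lipschitz selection is handled correctly and disappears in the limit.
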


We omit the proof of Proposition \ref{prop:lipsubquot} as it can be extracted from the more delicate argument given in Proposition \ref{prop:gen-quotient} below.
 
In the Banach space setting, umbel $p$-convexity is also stable under more general notions of nonlinear quotients, most notably uniform quotients or coarse quotients, as defined in \cite{BJLPS99} and \cite{Zhang15} respectively. We will treat these nonlinear quotients all at once, and we need to introduce some more notation. The $K$-neighborhood of a set $A$ in a metric space $\metXd$, denoted $A_K$, is the set $A_K\eqd \{ z\in \metX \colon \exists a\in A \text{ such that }\dX(z,a)\le K\}$. 
The following simple general lifting lemma will be crucial in the ensuing arguments about nonlinear quotients. 

\begin{lemm}\label{lem:general-lifting}
Let $f\colon Z\subseteq \metX \to \metY$ and $g\colon \wtree{\bN}{\le m} \to \metY$, where $g$ is any map and $f$ is a map such that there exist constant $C>0$ and $K\ge 0$ with $\metY=f(Z)_K$, and for all $x \in Z$ and  $r>0$,
\begin{equation*}
B_{\met Y}\Big(f(x),\frac{r}{C}\Big)\subset f(B_\metX(x,r)\cap Z)_K.
\end{equation*}
Then, there is a map $h\colon \wtree{\bN}{\le m} \to Z$ such that for all $\nbar \in \wtree{\bN}{\le m}$,
\begin{equation}\label{eq:gen-upper-lift}
\dX(h(n_1,\dots,n_k),h(n_1,\dots,n_{k-1})\le C \cdot \sd_{\metY}(g(n_1,\dots,n_k),g(n_1,\dots,n_{k-1})) + CK
\end{equation}
and 
\begin{equation}
\label{eq:gen-id-lift}\dY(f(h(\nbar)),g(\nbar))\le K.
\end{equation}
\end{lemm}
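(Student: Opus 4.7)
The plan is to define $h$ recursively by induction on the height of the vertex $\nbar \in \wtree{\bN}{\le m}$, using the hypothesis $\metY = f(Z)^K$ to start the recursion at the root, and using the co-Lipschitz-type inclusion $B_{\metY}(f(x), r/C) \subset f(B_\metX(x, r) \cap Z)^K$ to carry out each inductive step. Throughout the construction, the key invariant I will propagate is precisely \eqref{eq:gen-id-lift}, namely $\dY(f(h(\nbar)), g(\nbar)) \le K$, which then lets me control the increment $\dY(f(h(n_1,\dots,n_{k-1})), g(n_1,\dots,n_k))$ needed to apply the inclusion hypothesis at the next step.

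For the base case I would pick any $h(\emptyset) \in Z$ such that $\dY(f(h(\emptyset)), g(\emptyset)) \le K$; such a point exists because $g(\emptyset) \in \metY = f(Z)^K$. For the inductive step, suppose $h$ has been defined on all vertices of height at most $k-1$ and that \eqref{eq:gen-id-lift} holds there. Fix $\nbar = (n_1, \dots, n_k)$, write $\nbar' = (n_1, \dots, n_{k-1})$, and set $x \eqd h(\nbar') \in Z$. Then by the triangle inequality and the inductive hypothesis,
\[
\dY(f(x), g(\nbar)) \;\le\; \dY(f(x), g(\nbar')) + \sd_\metY(g(\nbar'), g(\nbar)) \;\le\; K + \sd_\metY(g(\nbar'), g(\nbar)).
\]
Setting $r \eqd C\bigl(K + \sd_\metY(g(\nbar'), g(\nbar))\bigr)$, the above gives $\dY(f(x), g(\nbar)) \le r/C$, so $g(\nbar)$ lies in the ball $B_\metY(f(x), r/C)$. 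Applying the hypothesis of the lemma yields a point $h(\nbar) \in B_\metX(x, r) \cap Z$ with $\dY(f(h(\nbar)), g(\nbar)) \le K$, which is exactly \eqref{eq:gen-id-lift} at level $k$, and the bound $\dX(h(\nbar), x) \le r = C \sd_\metY(g(\nbar'), g(\nbar)) + CK$ is precisely \eqref{eq:gen-upper-lift}.

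The main (and essentially only) subtlety is the boundary case $\dY(f(x), g(\nbar)) = r/C$ when the inclusion hypothesis is interpreted with \emph{open} balls: the point $g(\nbar)$ then sits on the sphere rather than strictly inside $B_\metY(f(x), r/C)$. I would handle this by taking $r_\eta \eqd r + \eta$ for any $\eta > 0$, applying the inclusion to the strictly larger open ball, and observing that the resulting inequality \eqref{eq:gen-upper-lift} then holds with an arbitrarily small additive slack $\eta$; choosing $\eta$ to zero (or by using a slightly stronger convention that the inclusion holds for closed balls, which is how it arises in the applications we have in mind) gives the clean statement. Apart from this formality the proof is entirely mechanical: induction, triangle inequality, and one invocation of the hypothesis per level of the tree.
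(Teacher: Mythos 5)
Your proof is correct and follows essentially the same argument as the paper: induction over the tree height, using $\metY = f(Z)^K$ to seed the root, and then at each step using the inductive invariant \eqref{eq:gen-id-lift} together with the triangle inequality and the co-Lipschitz-type inclusion (applied with radius $r = C(K + \sd_\metY(g(\nbar'),g(\nbar)))$) to define the child and re-establish the invariant. The paper does not belabor the open/closed-ball issue you flag; it tacitly reads the inclusion with closed balls, which is the cleaner convention here, so your aside is reasonable but could simply be dropped.
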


\begin{proof}
The proof is a simple induction on $m$. If $m=0$, let $y\in f(Z)$ such that $\dY(g(\troot),y)\le K$, pick an arbitrary $z\in Z$ such that $f(z)=y$, and then let $h(\emptyset)\eqd z$. Obviously, $\dY(f(h(\troot)),g(\troot))\le K$ and the other condition is vacuously true. Assume that the map $h$ has been constructed on $\wtree{\bN}{\le m}$. We extend $h$ to $\wtree{\bN}{\le m+1}$ as follows. Given $\nbar \in \wtree{\bN}{m}$ and $n_{m+1}\in\bN$, let $r\eqd \sd_{\metY}(g(\nbar),g(\nbar,n_{m+1}))$. Since $\dY(f(h(\nbar)),g(\nbar))\le K$ we have 
\begin{equation*}
g(\nbar,n_{m+1})\in B_{\metY}(f(h(\nbar)),r+K)\subseteq f(B_{\metX}(h(\nbar), C(r+K))\cap Z)_K.
\end{equation*}
Let $y\in f(B_{\metX}(h(\nbar), C(r+K)) \cap Z)\subseteq \metY$ such that $\dY(y,g(\nbar,n_{m+1}))\le K$, then pick arbitrarily $z\in B_{\metX}(h(\nbar), C(r+K)) \cap Z$ such that $f(z)=y$, and finally set $h(\nbar,n_{m+1})\eqd z$ from which it immediately follows that 
$$\dY(f(h(\nbar,n_{m+1})),g(\nbar,n_{m+1}))\le K.$$ 
Finally, observe that by definition 
$$\dX(h(\nbar),h(\nbar,n_{m+1})\le C(r+K) = C\sd_{\met Y}(g(\nbar),g(\nbar,n_{m+1})) + CK.$$
\end{proof}

\begin{prop}
\label{prop:gen-quotient}
Let $\metYd$ be a self-similar\footnote{A metric space $\metXd$ is \emph{self-similar} if for every $t > 0$, there exists a bijection $\delta_t \colon \metX \to \metX$ with $\dX(\delta_t(x),\delta_t(y)) = t \cdot \dX(x,y)$ for every $x,y \in \metX$.} metric space. Assume that there is a map $f\colon Z\subseteq \metXd \to \metY$, that is coarse Lipschitz, i.e., there exist $L>0$ and $A\ge 0$ such that for all $x,y\in Z$ 
\begin{equation}
\label{eq:LLD}
\dY(f(x),f(y))\le L\dX(x,y) + A.
\end{equation}
Assume also that there are constant $C>0$ and $K\ge 0$ with $\metY=f(Z)_K$, such that for all $x \in Z$ and  $r>0$,
\begin{equation}
\label{eq:co-condition}
B_{\met Y}\Big(f(x),\frac{r}{C}\Big)\subset f(B_\metX(x, r)\cap Z)_K.
\end{equation}
If $\metX$ is umbel $p$-convex for some $p \in (0,\infty)$, then $\metY$ is umbel $p$-convex.
\end{prop}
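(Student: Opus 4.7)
The plan is to reduce the statement to an application of umbel $p$-convexity of $\metX$ via the lifting Lemma \ref{lem:general-lifting}, with self-similarity used to make the additive error constants $A$ and $K$ irrelevant in a limit. Given a map $g \colon \wtree{\bN}{\le 2^k} \to \metY$, and a parameter $t > 0$ to be sent to infinity, I would apply Lemma \ref{lem:general-lifting} to the rescaled map $\delta_t \circ g \colon \wtree{\bN}{\le 2^k} \to \metY$ (which is permissible by \eqref{eq:co-condition}), obtaining a lift $h \colon \wtree{\bN}{\le 2^k} \to Z \subseteq \metX$ satisfying
\[\dX(h(\nbar, n_k), h(\nbar, n_{k-1})) \le Ct \cdot \dY(g(\nbar, n_k), g(\nbar, n_{k-1})) + CK\]
for parent--child pairs, and $\dY(f(h(\nbar)), \delta_t(g(\nbar))) \le K$ for all $\nbar$. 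Combined with the coarse Lipschitz bound \eqref{eq:LLD} and the triangle inequality, the latter yields, for any two vertices $\ubar, \bar{v}$ in the tree,
\[t \cdot \dY(g(\ubar), g(\bar v)) \le L \dX(h(\ubar), h(\bar v)) + A + 2K.\]

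Applying the umbel $p$-convexity inequality \eqref{eq:umbel-p-convex} to $h$ in $\metX$ then gives an upper bound on the RHS in terms of parent--child increments of $h$ (controllable through the lifting estimate above), and a lower bound on the LHS involving arbitrary distances in the image of $h$ (controllable through the co-Lipschitz--type estimate just displayed). The critical technical point is that the map $x \mapsto (tx - c)_+^p$ is monotone nondecreasing in $x$ for $t > 0$, which lets the lower bound
\[\dX(h(\nbar, \bar\delta), h(\nbar, j, \bar\eta))^p \ge L^{-p}\bigl(t \cdot \dY(g(\nbar, \bar\delta), g(\nbar, j, \bar\eta)) - 2K - A\bigr)_+^p\]
pass cleanly through the nested $\inf_\nbar \inf_{\bar\delta} \liminf_j \inf_{\bar\eta}$ operations that appear on the left-hand side of the umbel inequality. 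Combining, we arrive at an inequality of the form
\[\frac{1}{L^p}\sum_{s=1}^{k-1}\frac{1}{2^{k-1-s}}\sum_{t'=1}^{2^{k-1-s}}\frac{\bigl(t \cdot I_{s,t'}(g) - 2K - A\bigr)_+^p}{2^{sp}} \le \Pi_p^u(\metX)^p \cdot \frac{1}{2^k}\sum_{\ell=1}^{2^k}\bigl(Ct \cdot S_\ell(g) + CK\bigr)^p,\]
where $I_{s,t'}(g)$ and $S_\ell(g)$ denote the corresponding infima and suprema of $g$-distances appearing in the umbel $p$-convexity inequality for $\metY$.

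Dividing both sides by $t^p$ and letting $t \to \infty$, the additive constants $A$ and $K$ vanish by continuity, and the desired umbel $p$-convexity inequality for $\metY$ emerges with constant at most $LC \cdot \Pi_p^u(\metX)$. The main obstacle I anticipate is the bookkeeping required to verify that the monotonicity argument genuinely commutes with the $\liminf$ and with the compound infima (and to handle the $(\cdot)_+$ truncation cleanly without losing the inequality when $t$ is moderate); everything else is essentially an exercise in tracking constants. Note that self-similarity is used in an essential way here---it is what allows $\delta_t \circ g$ to still be a legitimate $\metY$-valued test map for every $t > 0$, so that the whole scaling trick makes sense; without it one would be stuck with a Lipschitz-quotient-type argument as in Proposition \ref{prop:lipsubquot}.
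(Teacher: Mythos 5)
Your argument is correct, but it takes a genuinely different route from the paper's. You exploit self-similarity \emph{quantitatively}: you apply the lifting Lemma~\ref{lem:general-lifting} to the rescaled map $\delta_t \circ g$ for each $t>0$, convert the lifting and coarse-Lipschitz bounds into a two-sided comparison between $\dX$- and $\dY$-increments, pass the lower bound through the nested $\inf$/$\liminf$ (legitimate, as you note, because $x \mapsto (tx-c)_+^p$ is continuous and nondecreasing on $[0,\infty)$, hence commutes with infima and with $\liminf$), divide by $t^p$, and send $t\to\infty$ to annihilate the additive errors $A$ and $K$. Since $h$ has been eliminated from both sides before the limit and both sums are finite (after the harmless reduction to $\sup_{\nbar}\dY(g(n_1,\dots,n_{\ell-1}),g(n_1,\dots,n_\ell))<\infty$ for each $\ell$), term-by-term passage to the limit is unproblematic; indeed $(I-c/t)_+^p$ is nondecreasing and $(CS+CK/t)^p$ nonincreasing in $t$, so the limit inequality holds. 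The paper instead invokes self-similarity only once, to normalize the right-hand side to $1$, and then handles the additive errors by a case split on each $(s,t')$-term: either it is at most $\gamma^p/2^{sp}$ with $\gamma=L+A+2K$ and is absorbed into a geometric tail, or it is large, in which case the coarse Lipschitz bound promotes to a genuine Lipschitz comparison. Your approach is cleaner and buys a sharper constant, $\Pi_p^u(\metY) \le LC\,\Pi_p^u(\metX)$, versus the paper's $\big((L+A+2K)^p(\Pi^p\max\{1,2^{p-1}\}(C^p+(CK)^p)+\sum_{s\ge1}2^{-sp})\big)^{1/p}$; the paper's case-split is arguably more robust in settings where the target inequality is not exactly scale-invariant, but for the umbel inequality as stated your limiting argument works and is preferable.
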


\begin{proof}
Let $f: Z\subseteq \metX \to \metY$ be a map as above. We need to show that there exists a constant $\Pi>0$ such that for every map $g: \wtree{\bN}{\le 2^k} \to \metY$,
\begin{align*}
\sum_{s=1}^{k-1}\frac{1}{2^{k-1-s}}\sum_{t=1}^{2^{k-1-s}}\inf_{\nbar\in\wtree{\bN}{t2^{s+1}-2^s}}\inf_{\stackrel{\bar{\delta}\in\wtree{\bN}{2^s}\colon}{(\nbar,\bar{\delta})\in \wtree{\bN}{\le 2^k}}}\liminf_{j\to\infty}\inf_{\stackrel{\bar{\eta}\in\wtree{\bN}{2^s-1}\colon}{(\nbar,j,\bar{\eta})\in \wtree{\bN}{\le 2^k}}}\frac{\dY(g(\nbar,\bar{\delta}),g(\nbar,j,\bar{\eta}))^p}{2^{sp}}\\
\le \Pi^p \frac{1}{2^{k}}\sum_{\ell=1}^{2^{k}} \sup_{\nbar\in \wtree{\bN}{\ell}}\dY(g(n_1,\dots,n_{\ell-1}),g(n_1,\dots,n_{\ell}))^p.
\end{align*} 
Observe that if the right-hand side vanishes, then the left-hand side vanishes as well and there is nothing to prove. Then by scale-invariance of the inequality and the self-similarity of $\metY$, we may assume
$$\frac{1}{2^{k}}\sum_{\ell=1}^{2^{k}} \sup_{\nbar\in \wtree{\bN}{\ell}}\dY(g(n_1,\dots,n_{\ell-1}),g(n_1,\dots,n_{\ell}))^p=1.$$
Let $\Pi = \Pi_p^\beta(\metX)$. Then by umbel $p$-convexity of $\metX$ applied to $h: \wtree{\bN}{\le 2^k} \to Z\subset \metX$, where $h$ is the lifting of $g$ as defined in Lemma \ref{lem:general-lifting}, we have
\begin{align}
\label{eq:h}
\nonumber \sum_{s=1}^{k-1}\frac{1}{2^{k-1-s}}\sum_{t=1}^{2^{k-1-s}}\inf_{\nbar\in\wtree{\bN}{t2^{s+1}-2^s}}\inf_{\stackrel{\bar{\delta}\in\wtree{\bN}{2^s}\colon}{(\nbar,\bar{\delta})\in \wtree{\bN}{\le 2^k}}}\liminf_{j\to\infty}\inf_{\stackrel{\bar{\eta}\in\wtree{\bN}{2^s-1}\colon}{(\nbar,j,\bar{\eta})\in \wtree{\bN}{\le 2^k}}}\frac{\dX(h(\nbar,\bar{\delta}),h(\nbar,j,\bar{\eta}))^p}{2^{sp}}\\
\le \Pi^p \frac{1}{2^{k}}\sum_{\ell=1}^{2^{k}} \sup_{\nbar\in \wtree{\bN}{\ell}}\dX(h(n_1,\dots,n_{\ell-1}),h(n_1,\dots,n_{\ell}))^p.
\end{align} 
It follows from \eqref{eq:gen-upper-lift} that 
\begin{align}
\label{eq:h2}
\nonumber \Pi^p \frac{1}{2^{k}}\sum_{\ell=1}^{2^{k}} & \sup_{\nbar\in \wtree{\bN}{\ell}}\dX(h(n_1,\dots,n_{\ell-1}),h(n_1,\dots,n_{\ell}))^p \\
\nonumber & \le \Pi^p \frac{1}{2^{k}}\sum_{\ell=1}^{2^{k}} \sup_{\nbar\in \wtree{\bN}{\ell}}(C\dY(g(n_1,\dots,n_{\ell-1}),g(n_1,\dots,n_{\ell}))+CK)^p\\
 & \le \Pi^p\max\{1,2^{p-1}\}(C^p+(CK)^p).
\end{align} 
Let $1\le s\le k-1$ and $1 \le t \leq 2^{k-1-s}$. Then either
\begin{equation}\label{eq:gen-case1}
\inf_{\nbar\in\wtree{\bN}{t2^{s+1}-2^s}}\inf_{\stackrel{\bar{\delta}\in\wtree{\bN}{2^s}\colon}{(\nbar,\bar{\delta})\in \wtree{\bN}{\le 2^k}}}\liminf_{j\to\infty}\inf_{\stackrel{\bar{\eta}\in\wtree{\bN}{2^s-1}\colon}{(\nbar,j,\bar{\eta})\in \wtree{\bN}{\le 2^k}}}\frac{\dY(g(\nbar,\bar{\delta}),g(\nbar,j,\bar{\eta}))^p}{2^{sp}}\le \frac{(L+A+ 2K)^p}{2^{sp}}
\end{equation}
or
\begin{equation}\label{eq:gen-case2}
\inf_{\nbar\in\wtree{\bN}{\le t2^{s+1}-2^{s}}}\inf_{\stackrel{\bar{\delta}\in\wtree{\bN}{2^s}\colon}{(\nbar,\bar{\delta})\in \wtree{\bN}{\le 2^k}}}\liminf_{j\to\infty}\inf_{\stackrel{\bar{\eta}\in\wtree{\bN}{2^s-1}\colon}{(\nbar,j,\bar{\eta})\in \wtree{\bN}{\le 2^k}}}\frac{\dY(g(\nbar,\bar{\delta}),g(\nbar,j,\bar{\eta}))^p}{2^{sp}}> \frac{(L+ A + 2K)^p}{2^{sp}}.
\end{equation}
If \eqref{eq:gen-case2} holds, then for all $\nbar\in\wtree{\bN}{t2^{s+1}-2^{s}}$ and $\bar{\delta}\in\wtree{\bN}{2^s}$, we have $$\liminf_{j\to\infty}\inf_{\stackrel{\bar{\eta}\in\wtree{\bN}{2^s-1}\colon}{(\nbar,j,\bar{\eta})\in \wtree{\bN}{\le 2^k}}}\dY(g(\nbar,\bar{\delta}),g(\nbar,j,\bar{\eta}))>  L + A + 2K,$$
and thus there exists $j_0$ such that for all $j\ge j_0$ and all $\bar{\eta}\in\wtree{\bN}{2^s-1}$ we have 
\[\dY(g(\nbar,\bar{\delta}),g(\nbar,j,\bar{\eta}))> L + A + 2K.\]
It follows from triangle inequality and \eqref{eq:gen-id-lift} that
\begin{align*}
\dY(f(h(\nbar,\bar{\delta})),f(h(\nbar,j,\bar{\eta}))) & \ge \dY(g(\nbar,\bar{\delta}),g(\nbar,j,\bar{\eta})) - \dY(f(h(\nbar,\bar{\delta})),g(\nbar,\bar{\delta}))) \\
									& \quad  - \dY(g(\nbar, j, \bar{\eta})),f(h(\nbar,j,\bar{\eta})))\\
									& \ge L + A + 2K - K - K \\
									& = L + A.
\end{align*}
Observe now that $\dY(f(x),f(y))< L+A$ whenever $\dX(x,y) < 1$ and based on the inequality above, necessarily $\dX(h(\nbar,\bar{\delta})),h(\nbar,j,\bar{\eta})))\ge 1$. Thus in this case, it follows from \eqref{eq:gen-id-lift} and \eqref{eq:LLD} that 
\begin{align*}
\dY(g(\nbar,\bar{\delta}),g(\nbar,j,\bar{\eta})) & \le \dY(f(h(\nbar,\bar{\delta})),f(h(\nbar,j,\bar{\eta}))) + 2K\\
								& \le L \dX(h(\nbar,\bar{\delta}),h(\nbar,j,\bar{\eta})) + A + 2K\\
								& \le (L+ A + 2K)\dX(h(\nbar,\bar{\delta}),h(\nbar,j,\bar{\eta})).
\end{align*}
Then, letting $\gamma\eqd (L + A + 2K)$ for simplicity,
\begin{align}\label{eq:gen-auxquot}
\nonumber \inf_{\nbar\in\wtree{\bN}{t2^{s+1}-2^{s}}} & \inf_{\stackrel{\bar{\delta}\in\wtree{\bN}{2^s}\colon}{(\nbar,\bar{\delta})\in \wtree{\bN}{\le 2^k}}}\liminf_{j\to\infty}\inf_{\stackrel{\bar{\eta}\in\wtree{\bN}{2^s-1}\colon}{(\nbar,j,\bar{\eta})\in \wtree{\bN}{\le 2^k}}}\frac{\dY(g(\nbar,\bar{\delta}),g(\nbar,j,\bar{\eta}))^p}{2^{sp}}\\
 		& \le \gamma^p\inf_{\nbar\in\wtree{\bN}{t2^{s+1}-2^{s}}}\inf_{\stackrel{\bar{\delta}\in\wtree{\bN}{2^s}\colon}{(\nbar,\bar{\delta})\in \wtree{\bN}{\le 2^k}}}\liminf_{j\to\infty}\inf_{\stackrel{\bar{\eta}\in\wtree{\bN}{2^s-1}\colon}{(\nbar,j,\bar{\eta})\in \wtree{\bN}{\le 2^k}}}\frac{\dX(h(\nbar,\bar{\delta}),h(\nbar,j,\bar{\eta}))^p}{2^{sp}}.
\end{align}
Consequently,
\begin{align*}
&\sum_{s=1}^{k-1} \frac{1}{2^{k-1-s}}\sum_{t=1}^{2^{k-1-s}}\inf_{\nbar\in\wtree{\bN}{t2^{s+1}-2^{s}}}\inf_{\stackrel{\bar{\delta}\in\wtree{\bN}{2^s}\colon}{(\nbar,\bar{\delta})\in \wtree{\bN}{\le 2^k}}}\liminf_{j\to\infty}\inf_{\stackrel{\bar{\eta}\in\wtree{\bN}{2^s-1}\colon}{(\nbar,j,\bar{\eta})\in \wtree{\bN}{\le 2^k}}}\frac{\dY(g(\nbar,\bar{\delta}),g(\nbar,j,\bar{\eta}))^p}{2^{sp}} \stackrel{\eqref{eq:gen-case1} \land \eqref{eq:gen-auxquot}}{\le}\\
& \sum_{s=1}^{k-1}\frac{1}{2^{k-1-s}}\sum_{t=1}^{2^{k-1-s}} \max\Big\{\frac{\gamma^p}{2^{sp}}, \gamma^p \inf_{\nbar\in\wtree{\bN}{t2^{s+1}-2^{s}}}\inf_{\stackrel{\bar{\delta}\in\wtree{\bN}{2^s}\colon}{(\nbar,\bar{\delta})\in \wtree{\bN}{\le 2^k}}}\liminf_{j\to\infty}\inf_{\stackrel{\bar{\eta}\in\wtree{\bN}{2^s-1}\colon}{(\nbar,j,\bar{\eta})\in \wtree{\bN}{\le 2^k}}}\frac{\dX(h(\nbar,\bar{\delta}),h(\nbar,j,\bar{\eta}))^p}{2^{sp}}\Big\}\\
& \le \gamma^p \sum_{s=1}^{k-1}\frac{1}{2^{k-1-s}}\sum_{t=1}^{2^{k-1-s}}\inf_{\nbar\in\wtree{\bN}{t2^{s+1}-2^{s}}}\inf_{\stackrel{\bar{\delta}\in\wtree{\bN}{2^s}\colon}{(\nbar,\bar{\delta})\in \wtree{\bN}{\le 2^k}}}\liminf_{j\to\infty}\inf_{\stackrel{\bar{\eta}\in\wtree{\bN}{2^s-1}\colon}{(\nbar,j,\bar{\eta})\in \wtree{\bN}{\le 2^k}}}\frac{\dX(h(\nbar,\bar{\delta}),h(\nbar,j,\bar{\eta}))^p}{2^{sp}} + \sum_{s=1}^{k-1} \frac{\gamma^p}{2^{sp}}\\
 & \stackrel{\eqref{eq:h} \land \eqref{eq:h2}}{\le}  \gamma^p\Pi^p\max\{1,2^{p-1}\}(C^p+(CK)^p) + \sum_{s=1}^{\infty} \frac{\gamma^p}{2^{sp}} < \infty,
\end{align*} 
which concludes the proof since $ \frac{1}{2^{k}}\sum_{\ell=1}^{2^{k}} \sup_{\nbar\in \wtree{\bN}{\ell}}\norm{g(n_1,\dots,n_{\ell-1})-g(n_1,\dots,n_{\ell})}_\banY^p=1$, and the constant 
\[(L + A + 2K)^p\left(\Pi^p\max\{1,2^{p-1}\}(C^p+(CK)^p) + \sum_{s=1}^{\infty} \frac{1}{2^{sp}}\right) \] is independent of $k$ and $g$.
\end{proof}

Note that a Lipschitz subquotient map satisfies the assumptions of Proposition \ref{prop:gen-quotient} with $L=\lip(f)$, $A=0$, $C=\colip(f)$, $K=0$, and the proof of Proposition \ref{prop:lipsubquot} can be simplified and carried over for arbitrary metric spaces (without the self-similarity assumption). The more general notions of nonlinear quotients which we will consider satisfy the hypotheses of Proposition \ref{prop:gen-quotient} under further assumptions on the metric spaces. 

A map $f\colon \metXd\to \metYd$ between metric spaces is called a \emph{uniform quotient map}, and $\metY$ is simply said to be a \emph{uniform quotient} of $\metX$, if $f$ is surjective, uniformly continuous and \emph{co-uniformly continuous}, i.e., for every $r>0$ there exists $\delta(r)>0$ such that for all $x\in \metX$, one has
\begin{equation*}
\label{eq:uniquotient}
B_{\met Y}\left(f(x),\delta(r)\right)\subset f(B_\metX(x,r)).
\end{equation*} 
It is a standard fact that a co-uniformly continuous map into a connected space is surjective. 

The more recent notion of coarse quotient introduced in \cite{Zhang15} is the following. A map $f\colon \metXd\to \metYd$ between metric spaces is called a \emph{coarse quotient map}, and $\metY$ is simply said to be a \emph{coarse quotient} of $\metX$, if $f$ is coarsely continuous and \emph{co-coarsely continuous with constant $K$} for some $K \ge 0$, i.e. for every $r> 0$ there exists $\delta(r)>0$ such that for all $x\in \metX$, one has
\begin{equation*}
\label{eq:coarsequotient}
B_{\met Y}\left(f(x),r)\right)\subset f(B_\metX(x,\delta(r)))_K.
\end{equation*}  
A co-coarsely continuous map may not be surjective, but nevertheless it is easily seen to be $K$-dense in the sense that $\metY = f(\metX)_K$. In fact, it can be shown, using a very clever argument due to Bill Johnson (see \cite{Zhang15}), that if a Banach space $\banY$ is a coarse quotient of a Banach space $\banX$, then there exists a coarse quotient mapping with vanishing constant $K=0$ from $\banX$ onto $\banY$. 

It is a standard fact that a map on a metrically convex space that is either uniformly continuous or coarsely continuous, is automatically coarse Lipschitz (one can take for instance $L=\max\{1,2\omega_f(1)\}$ and $c=\omega_f(1)$ where $\omega_f$ is the expansion modulus). Also,  every co-uniformly continuous, or co-coarsely continuous, map taking values into metrically convex spaces satisfies \eqref{eq:co-condition} for some $C > 0$ and $K \ge 0$ (see \cite[Corollary 4.3]{Zhang22}).

The following corollary follows from the discussion above and Proposition \ref{prop:gen-quotient}.

\begin{coro}\label{cor:quotient}
Let $\metXd$ be a metrically convex space that is umbel $p$-convex. If a self-similar metrically convex metric space $\metYd$ is a uniform or coarse quotient of $\metX$, then $\metY$ is umbel $p$-convex.
\end{coro}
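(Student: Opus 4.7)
The plan is to reduce the corollary directly to Proposition \ref{prop:gen-quotient} by checking that any uniform or coarse quotient map $f\colon \metX \to \metY$ satisfies both of that proposition's hypotheses with $Z = \metX$. First, I would verify the coarse-Lipschitz inequality \eqref{eq:LLD}. This is the standard fact alluded to in the paragraph preceding the corollary: if $f$ is uniformly (resp.\ coarsely) continuous and $\metX$ is metrically convex, then given $x,y \in \metX$ one can choose an almost-midpoint chain of length $\lceil \dX(x,y)\rceil$ with consecutive points at distance at most $1$, apply the expansion modulus $\omega_f(1)$ to each link, and conclude
\[\dY(f(x),f(y)) \le \max\{1,2\omega_f(1)\}\,\dX(x,y) + \omega_f(1),\]
which is \eqref{eq:LLD} with $L = \max\{1,2\omega_f(1)\}$ and $A = \omega_f(1)$.

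Second, I would appeal to \cite[Corollary 4.3]{Zhang} to produce the ball-covering property \eqref{eq:co-condition}. The input is co-uniform (resp.\ co-coarse with some defect $K_0$) continuity of $f$ together with metric convexity of $\metY$; the output is constants $C > 0$ and $K \ge 0$ such that for every $x \in \metX$ and every $r > 0$,
\[B_\metY\!\left(f(x),\frac{r}{C}\right) \subseteq f(B_\metX(x,r))^{K},\]
with $K = 0$ in the uniform case and $K$ inherited from the co-coarse constant in the coarse case. Co-uniform continuity in particular forces $\metY = f(\metX)$, while co-coarse continuity forces $\metY = f(\metX)^{K_0}$, so the density requirement of Proposition \ref{prop:gen-quotient} is also satisfied.

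With both hypotheses in hand, and using the assumed self-similarity of $\metY$, Proposition \ref{prop:gen-quotient} applies verbatim and concludes that $\metY$ is umbel $p$-convex. I do not expect any real obstacle: all the substantive work has been absorbed into Proposition \ref{prop:gen-quotient} and into the two cited facts (coarse-Lipschitzness on metrically convex domains and the ball-covering lemma from \cite{Zhang}). The corollary is essentially a packaging of these ingredients, and the only care needed is to treat the uniform and coarse cases in parallel by allowing the defect $K$ to be nonzero in the latter.
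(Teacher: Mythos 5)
Your proposal is correct and follows the paper's route exactly: the paper derives the corollary from Proposition \ref{prop:gen-quotient} together with the two preceding remarks that (i) on metrically convex domains uniformly/coarsely continuous maps are coarse Lipschitz with $L=\max\{1,2\omega_f(1)\}$, $A=\omega_f(1)$, and (ii) by \cite[Corollary 4.3]{Zhang} co-uniformly/co-coarsely continuous maps into metrically convex targets satisfy \eqref{eq:co-condition}. Your treatment of the $K$-density of the image and the role of self-similarity matches the paper's as well.
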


\begin{rema}
\label{rem:umbelcotypequo}
Straightforward modifications of the proofs of Proposition \ref{prop:lipsubquot} and Proposition \ref{prop:gen-quotient} give that infrasup-umbel $p$-convexity is stable under Lipschitz subquotients and by taking  uniform or coarse quotient maps from metrically convex spaces into self-similar metrically convex metric space. 
\end{rema}

Corollary \ref{cor:uniquo} below, which is an immediate consequence of the stability of umbel convexity under nonlinear quotients and Corollary \ref{cor:metchar}, was proved for the first time in \cite[Theorem 2.0.1]{DKR16} (for uniform quotients) \footnote{under a separability assumption which was later lifted in \cite{DKLR17}.} using the delicate ``fork argument'' and in \cite{Zhang22} (for uniform or coarse quotients) using a more elementary self-improvement argument.

\begin{coro}
\label{cor:uniquo}
Let $\banX$ be a Banach space that has an equivalent norm with property $(\beta)$. If a Banach space $\banY$ is a uniform or coarse quotient of $\banX$, then $\banY$ has an equivalent norm with property $(\beta)$.
\end{coro}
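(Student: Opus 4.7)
The plan is to chain together the three principal tools already established in the excerpt: the renorming-to-umbel direction (Corollary \ref{cor:metchar} combined with Theorem \ref{thm:renorming}), the stability of umbel $p$-convexity under uniform and coarse quotients (Corollary \ref{cor:quotient}), and the umbel-to-renorming direction (again Corollary \ref{cor:metchar}). Since the proof is announced in the excerpt as ``an immediate consequence'', the role of the proposal is mostly to assemble these pieces in the correct order and verify that the hypotheses of Corollary \ref{cor:quotient} are actually met in the Banach space setting.

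First I would observe that if $\banX$ admits an equivalent norm with property $(\beta)$, then by Theorem \ref{thm:renorming} it also admits an equivalent norm with property $(\beta_p)$ for some $p\in(1,\infty)$. By Corollary \ref{cor:betap->umbelp}, this latter norm makes $\banX$ umbel $p$-convex; since umbel $p$-convexity is a bi-Lipschitz invariant (a quantitative version was recorded just after the definition of umbel convexity), the original Banach space $(\banX,\|\cdot\|)$ is umbel $p$-convex as well.

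Next I would verify that $\banX$ and $\banY$, viewed as metric spaces with their norm-distances, fall under the scope of Corollary \ref{cor:quotient}. Metric convexity is immediate from the existence of straight-line segments in Banach spaces (midpoints are available). Self-similarity is realized by the dilations $\delta_t(x)\eqd tx$, which satisfy $\|\delta_t(x)-\delta_t(y)\|=t\|x-y\|$ for every $t>0$. With both hypotheses verified, Corollary \ref{cor:quotient} applies and transfers umbel $p$-convexity from $\banX$ to $\banY$ whenever $\banY$ is a uniform or coarse quotient of $\banX$.

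Finally I would invoke Corollary \ref{cor:metchar} in the reverse direction: since $\banY$ is umbel $p$-convex for some $p\in(1,\infty)$, it admits an equivalent norm with property $(\beta_q)$ for some (possibly different) $q\in(1,\infty)$, and in particular with Rolewicz's property $(\beta)$. The only potential subtlety I foresee is keeping track of the fact that the output exponent $q$ need not equal the input exponent $p$ (this is precisely the phenomenon witnessed by Kalton's example mentioned in the introduction after Theorem A), but this causes no trouble here because the statement to prove only asks for property $(\beta)$ up to renorming, not a quantitative exponent preservation. Thus there is no genuine obstacle: the content has been loaded entirely into Corollary \ref{cor:metchar} and Corollary \ref{cor:quotient}, and the proof of Corollary \ref{cor:uniquo} is essentially a one-line composition of these two results.
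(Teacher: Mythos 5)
Your proof is correct and follows exactly the route the paper announces: it explicitly calls Corollary~\ref{cor:uniquo} ``an immediate consequence of the stability of umbel convexity under nonlinear quotients and Corollary~\ref{cor:metchar}.'' You have merely unpacked that one-line justification, correctly checking that Banach spaces with their norm metric are metrically convex and self-similar so that Corollary~\ref{cor:quotient} applies, and correctly noting that the exponent may change when passing through Corollary~\ref{cor:metchar} but this is immaterial for the qualitative statement.
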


Equipped with the stability under nonlinear quotients of umbel convexity and infrasup-umbel convexity, and the fact that countably branching trees are neither umbel $p$-convex for any $p$ nor have non-trivial infrasup-umbel convexity, we are now in position to prove, via a metric invariant approach, generalized versions of a number of known results pertaining to the nonlinear geometry of Banach spaces with property $(\beta)$ (e.g. \cite[Theorem 4.1, Theorem 4.2, Theorem 4.3]{LimaLova12}, \cite[Corollary 4.3, Corollary 4.5, Corollary 5.2, Corollary 5.3]{DKLR14}, \cite[Theorem 3.0.2]{DKR16}, and \cite[Theorem 2.1, Theorem 4.6, Theorem 4.7]{BaudierZhang16}).  
We will just give one example here illustrating the flexibility of the metric invariant approach.

Corollary 4.5 in \cite{DKLR14} states that the space $(\sum_{i=1}^\infty\ell_{p_i})_{\ell_2}$, where $\{p_i\}_{i\ge 1}$ is a decreasing sequence such that $\lim_{i\to\infty}p_i=1$, is not a uniform quotient of a Banach space that admits an equivalent norm with property $(\beta)$. The original proof uses a combination of substantial results from the nonlinear geometry of Banach spaces which are interesting in their own rights:
\begin{itemize}
    \item Ribe's result that $(\sum_{i=1}^\infty\ell_{p_i})_{\ell_2}$ is uniformly homeomorphic to $\ell_1\oplus(\sum_{i=1}^\infty\ell_{p_i})_{\ell_2}$,
    \item the fact that $\co$ is a linear quotient of $\ell_1\oplus(\sum_{i=1}^\infty\ell_{p_i})_{\ell_2}$,
    \item a quantitative comparison of the $(\beta)$-modulus with the modulus of asymptotic uniform smoothness under uniform quotients (or the qualitative Lima-Randrianarivony theorem \cite{LimaLova12} which states that $\co$ is not a uniform quotient of a Banach space that admits an equivalent norm with property $(\beta)$).
\end{itemize}
Alternatively, using the main result of \cite{DKR16}, one could argue that the assumption implies that $(\sum_{i=1}^\infty\ell_{p_i})_{\ell_2}$ admits an equivalent norm with property $(\beta)$, hence an equivalent norm that is asymptotically uniformly smooth with power type $p$ for some $p > 1 = \lim_{i\to\infty} p_i$, and derive a contradiction using linear arguments pertaining to upper and lower tree estimates, which can be found in \cite{KOS99} or \cite{OdellSchlumprecht_RACSAM} for instance.

The metric invariant approach helps streamline and extend the argument as follows. It is easy to verify that the map $\tree^\omega_k\ni \nbar\mapsto \sum_{i=1}^l e_{(n_1,\dots,n_i)}$ where $l$ is the length of $\nbar$ and $\{e_{\nbar}\}_{\nbar \in \tree^\omega_k}$ is the canonical basis of $\ell_{p_i}$ is a bi-Lipschitz embedding of $\tree_k^\omega$ into $\ell_{p_i}$ with distortion at most $2$, say, if $p_i$ is chosen small enough. Therefore, $(\sum_{i=1}^\infty\ell_{p_i})_{\ell_2}$ does not have non-trivial infrasup-umbel convexity by Proposition \ref{prop:tree-dist}, and $(\sum_{i=1}^\infty\ell_{p_i})_{\ell_2}$ is not a uniform quotient of a metrically convex metric space with non-trivial infrasup-umbel convexity by Remark \ref{rem:umbelcotypequo}.

\section{More examples of metric spaces with non-trivial infrasup-umbel convexity}
\label{sec:examples}
In this section, we give more examples of metric spaces which are umbel convex or have non-trivial infrasup-umbel convexity. We begin with the simple observation that umbel convexity is trivial for proper\footnote{A metric space is \emph{proper} if all its closed balls are compact.} metric spaces in the same way that property $(\beta)$ is trivial for finite-dimensional normed spaces.

\begin{exam}
A proper metric space $\metXd$ satisfies the $p$-umbel inequality \eqref{eq:pumbelmetric} for every $p\in[1,\infty)$ and every $K>0$, i.e.,
for all $w,z\in \met X$ and $\{x_i\}_{i \in \bN}\subseteq \met X$ we have 
\begin{equation*}(9)\hskip .5cm
\frac{1}{2^p}\inf_{i \in \bN} \dX(w,x_i)^p+\frac{1}{K^p}\inf_{i\in\bN}\liminf_{j \to \infty} \dX(x_i,x_j)^p\le \frac{1}{2}\dX(z,w)^p + \frac{1}{2}\sup_{i \in \bN} \dX(z,x_i)^p.
\end{equation*}
Consequently, $\Pi^u_p(\metX) = 0$.
\end{exam}

\begin{proof}
Obviously, if the right hand side of \eqref{eq:pumbelmetric} is infinite, there is nothing to prove, so assume it is finite. This implies that $\{x_i\}_{i \in \bN}$ is contained in a bounded, and hence a compact, set. Then $\{x_i\}_{i \in \bN}$ has a convergent subsequence, from which it follows that
$$\frac{1}{K^p}\inf_{i\in\bN}\liminf_{j \to \infty} \dX(x_i,x_j)^p = 0.$$
This fact together with a convexity argument easily imply \eqref{eq:pumbelmetric}.
\end{proof}

It is not difficult to see that the $p$-fork inequality \eqref{eq:qfork}, or in fact a natural relaxation of it, implies the $p$-umbel inequality \eqref{eq:pumbelmetric}. Thus by the implication of Theorem \ref{thm:D} that was proved in \cite{AustinNaor}, it follows that the class of metric spaces that are umbel $2$-convex contains all non-negatively curved spaces. This observation is reminiscent of the fact that local properties of Banach spaces imply their asymptotic counterparts. To our knowledge, all known examples of metric spaces that are Markov $p$-convex satisfy the $p$-fork inequality and thus they are all umbel $p$-convex. However, it seems unclear whether Markov $p$-convexity implies umbel $p$-convexity; the converse is obviously false. An interesting class of examples comes from Banach Lie groups. Let $\banX$ be a Banach space and $\omega_\banX$ an antisymmetric, bounded bilinear form on $\banX$. The \emph{Heisenberg group over $\omega_\banX$}, denoted $\bH(\omega_\banX)$, is the set $\banX \times \bR$ equipped with the product 
\begin{equation*}
\bold{x}*\bold{y}=(x,s)*(y,t) \eqd (x+y,s+t+\omega(x,y)).
\end{equation*}
In the sequel we will sometimes abbreviate $\bold{x}*\bold{y}$ by $\bold{x}\bold{y}$.
If $\omega_\banX \equiv 0$, then $\bH(\omega_\banX)$ is simply the abelian direct sum $\banX \oplus \bR$, but otherwise $\bH(\omega_\banX)$ is nonabelian. The identity element is $\bold{0} = (0,0)$, and inverses are given by $(x,s)^{-1} = (-x,-s)$. We always equip a Heisenberg group with the product topology on $\banX \times \bR$, and under this topology it becomes a topological group.

There is a natural automorphic action of $(0,\infty)$ on $\bH(\omega_\banX)$ given by $t \mapsto \delta_t$ where $$\delta_t((x,s)) \eqd (tx,t^2s).$$ The maps $\{\delta_t\}_{t>0}$ are called \emph{dilations}. A function $\sd: \bH(\omega_\banX) \times \bH(\omega_\banX) \to [0,\infty)$ with the topological compatibility property $\sd(\bold{x}_n,\bold{0}) \to_{n\to \infty} 0 \Leftrightarrow \bold{x}_n \to_{n\to\infty} \bold{0}$ is called
\begin{itemize}
    \item \emph{left-invariant} if $\sd(\bold{g*x},\bold{g*y}) = \sd(\bold{x},\bold{y})$ for all $\bold{g},\bold{x},\bold{y} \in \bH(\omega_\banX)$ and
    \item \emph{homogeneous} if $\sd(\delta_t(\bold{x}),\delta_t(\bold{y})) = t \cdot \sd(\bold{x},\bold{y})$ for all $\bold{x},\bold{y} \in \bH(\omega_\banX)$ and $t > 0$.
\end{itemize}
If $\sd^1$ and $\sd^2$ are two left-invariant, homogeneous functions, then the formal identity from $(\bH(\omega_\banX),\sd^1)$ onto $(\bH(\omega_\banX),\sd^2)$ is a bi-Lipschitz equivalence. Indeed, by symmetry it suffices to show that the map is Lipschitz. By left-invariance this reduces to $\sd^1(\bold{x},\bold{0}) \lesssim \sd^2(\bold{x},\bold{0})$, and by homogeneity this further reduces to the existence of a constant $c > 0$ such that $\sd^1(\bold{x},\bold{0}) \leq 1$ whenever $\sd^2(\bold{x},\bold{0}) \leq c$. This claim is true by the topological compatibilities  of $\sd^1,\sd^2$.

When $\omega_\banX \not\equiv 0$, there is a canonical left-invariant, homogeneous metric on $\bH(\omega_\banX)$ called the \emph{Carnot-Carath\'eodory metric}, denoted $\sd_{cc}$. A pair $(\gamma,z)$ of Lipschitz curves $\gamma: [0,1] \to \banX$, $z: [0,1] \to \bR$ is called a \emph{horizontal curve} if $\gamma$ is differentiable almost everywhere and $z'(t) = \omega(\gamma(t),\gamma'(t))$ for almost every $t \in [0,1]$. The \emph{horizontal length} of a horizontal curve $(\gamma,z)$ is defined to be the length of $\gamma$. Then the Carnot-Carath\'eodory distance between $\bold{x}$ and $\bold{y}$ is defined to be the infimum of horizontal lengths of horizontal curves joining $\bold{x}$ and $\bold{y}$. It is exactly the assumption $\omega_\banX \not\equiv 0$ that ensures that any two points in $\bH(\omega_\banX)$ can be joined by a horizontal curve. Obviously, $\sd_{cc}$ satisfies the triangle inequality and is a length metric. Any left-invariant, homogeneous, symmetric function on $\bH(\omega_\banX)$ is a quasi-metric since it is bi-Lipschitz equivalent to the metric $\sd_{cc}$.
A particularly handy way to obtain such functions is via Koranyi-type norms. For $p\in[1,\infty]$ and a given $\lambda > 0$, define a function $N_{p,\lambda} \colon \bH(\omega_\banX) \to [0,\infty)$ by 
\begin{equation*}
N_{p,\lambda}((x,s)) \eqd \begin{cases}
					(\norm{x}_\banX^{2p} + \lambda^{2p} |s|^p)^{\frac{1}{2p}}, \text{ if } p\in[1,\infty)\\
					\max\{\norm{x}_\banX, \lambda \sqrt{|s|}\} \text{ if } p=\infty.
					\end{cases}
\end{equation*}
Then we define the function $\sd_{p,\lambda}(\bold{x},\bold{y}) \eqd N_{p,\lambda}(\bold{y}^{-1}*\bold{x})$. Clearly, $\sd_{p,\lambda}$ is a symmetric, left-invariant, homogeneous function, and hence is a quasi-metric equivalent to $\sd_{cc}$.

Banach Lie groups constructed this way have been investigated in \cite{MagnaniRajala14} under the name \emph{Banach homogeneous groups}. When $\banX = \bR^n \oplus \bR^n$ and 
$$
\omega_\banX((x_1,x_2),(y_1,y_2)) = \frac{1}{2}\langle x_1,y_2 \rangle - \frac{1}{2}\langle x_2,y_1 \rangle,
$$
$\bH(\omega_\banX)$ is a (finite-dimensional) Lie group called the \emph{$n$th Heisenberg group}, and simply denoted $\bH(\bR^n)$. The space $\bH(\bR^n)$ is very well-studied by metric space geometers, see \cite{CDPT07} for an introduction. We will denote by $\bH(\ell_2)$ the infinite-dimensional Heisenberg group $\bH(\omega_{\ell_2})$ where $\omega_{\ell_2}((x_1,y_1),(x_2,y_2)) = \frac12 \langle x_1, y_2 \rangle - \frac12 \langle y_1 , x_2 \rangle$. Note that $\omega_{\ell_2}\not\equiv 0$. It was shown by Li in \cite{Li16} that the set of $p$'s for which $\bH(\ell_2)$ is Markov $p$-convex is exactly $[4,\infty)$. We believe that Li's proof can be adjusted to show that $\bH(\omega_\banX)$ is Markov $2p$-convex whenever $\banX$ is $p$-uniformly convex. In Section \ref{sec:Heisenberg-parallelogram}, we will provide a more direct argument - based on that found in \cite[Section 4.2]{GartlandCarnot} - to prove this result (cf. Theorem \ref{thm:Sean}).


The next theorem shows that a Heisenberg group over a Banach space with property $(\beta_p)$ is infrasup-umbel $p$-convex. These examples are interesting since these Heisenberg groups do not admit bi-Lipschitz embeddings into any Banach space with an equivalent norm with property $(\beta)$, and thus are genuine metric examples. Indeed, an infinite-dimensional Heisenberg group contains $\bH(\bR)$ bi-Lipschitzly, and it was crucially observed by Semmes \cite{Semmes96} that $\bH(\bR)$ does not embed bi-Lipschitzly into any Banach space with the Radon-Nikod\'ym property (in particular a reflexive one) since Pansu's differentiability theorem \cite{Pansu89} extends to RNP-target spaces (cf. \cite{LeeNaor06} and \cite{CheegerKleiner06} for more details).

\begin{theo} \label{thm:Heisenberg}
Let $p\in [2,\infty)$ and $\omega_\banX$ be any bounded antisymmetric bilinear form on a Banach space $\banX$ that satisfies the relaxation of the $p$-umbel inequality \eqref{eq:relaxed-p-umbel} with constant $C$. 
Then $(\bH(\omega_\banX),\sd_{\infty,1})$ satisfies the relaxation of the $p$-umbel inequality \eqref{eq:relaxed-p-umbel} with constant $\max\{C,2 \cdot 8^{1/p}\}$. 

Consequently, for any $p \in [2,\infty)$ and any non-zero, antisymmetric, bounded bilinear form $\omega_\banX$ on a Banach space $\banX$ with property $(\beta_p)$, $(\bH(\omega_\banX),\sd_{cc})$ is infrasup-umbel $p$-convex.
\end{theo}

\begin{proof}
Assume that we have shown that the quasi-metric $\sd_{\infty,1}$ satisfies the relaxation of the $p$-umbel inequality \eqref{eq:relaxed-p-umbel}, then by Remark \ref{rem:relaxed-p-umbel} and that fact $\sd_{\infty,1}$ is equivalent to $\sd_{cc}$, it will follow that $(\bH(\omega_\banX),\sd_{cc})$ satisfies inequality \eqref{eq:relaxed-umbel-p} and hence is infrasup-umbel $p$-convex.

Assume that $\banX$ satisfies the relaxation of the $p$-umbel inequality \eqref{eq:relaxed-p-umbel} with constant $C$. Set $K \eqd \max\{C,2 \cdot 8^{1/p}\}$, and simply write $\sd=\sd_{\infty,1}$ and $N=N_{\infty,1}$ in this proof. By left-invariance, we may assume $\bold{z} = \bold{0}$. There is nothing to prove if the right hand side of \eqref{eq:relaxed-p-umbel} is infinite, so assume it is finite. This implies $\{\bold{x}_i\}_{i \in \bN} = \{(x_i,s_i)\}_{i \in \bN}$ is a bounded subset of $\bH(\omega_\banX)$, and hence $\{x_i\}_{i \in \bN}$ and $\{s_i\}_{i \in \bN}$ are bounded subsets of $\banX$ and $\bR$, respectively. By Proposition \ref{prop:reflexivity} and Remark \ref{rem:relaxed-p-umbel}, $\banX$ is reflexive, and there is $\bM \in [\bN]^{\omega}$ such that weak-$\lim_{i \in \bM} x_i = x$ and $\lim_{i \in \bM} s_i = s$ for some $x \in \banX$ and $s \in \bR$. Then (denoting $\bold{w}=(w,t))$
\begin{align*}
\inf_{i \in \bN}\frac{\sd(\bold{x}_i,\bold{w})^{p}}{2^{p}} & + \inf_{i \in \bN} \liminf_{j \in \bN} \frac{\sd(\bold{x}_i,\bold{x}_j)^{p}}{K^p} = \inf_{i \in \bN} \frac{N(\bold{x}_i^{-1}\bold{w})^{p}}{2^p} + \inf_{i \in \bN} \liminf_{j \in \bN} \frac{N(\bold{x}_i^{-1}\bold{x}_j)^{p}}{K^p} \\
= & \inf_{i \in \bN}\max\Big\{\frac{\norm{w-x_i}_\banX^{p}}{2^p}, \frac{|t-s_i-\omega(w,x_i)|^{\frac{p}{2}}}{2^p}\Big\}\\\
 & \qquad + \inf_{i \in \bN} \liminf_{j \in \bN} \max\Big\{\frac{\norm{x_i-x_j}_\banX^{p}}{K^p}, \frac{|s_i-s_j-\omega(x_i,x_j)|^{\frac{p}{2}}}{K^p}\Big\} \\
  \leq &  \liminf_{i \in \bM}\max\Big\{\frac{\norm{w-x_i}_\banX^{p}}{2^p},   \frac{|t-s_i-\omega(w,x_i)|^{\frac{p}{2}}}{2^p}\Big\} \\
   & + \liminf_{i \in \bM} \liminf_{j \in \bM} \max\Big\{\frac{\norm{x_i-x_j}_\banX^{p}}{K^p},   \frac{|s_i-s_j-\omega(x_i,x_j)|^{\frac{p}{2}}}{K^p}\Big\} \\
= & \max\Big\{\liminf_{i \in \bM}\frac{\norm{w-x_i}_\banX^{p}}{2^p},  \frac{|t-s-\omega(w,x)|^{\frac{p}{2}}}{2^p}\Big\} \\
 & + \max\Big\{\liminf_{i \in \bM} \liminf_{j \in \bM}\frac{\norm{x_i-x_j}_\banX^{p}}{K^p},   \frac{|s-s-\omega(x,x)|^{\frac{p}{2}}}{K^p}\Big\}.
 \end{align*}
Since $\omega_\banX$ is antisymmetric, $\omega_\banX(x,x)=0$ and hence 
 \begin{align*}
 \inf_{i \in \bN} \frac{\sd(\bold{x}_i,\bold{w})^{p}}{2^{p}} & + \inf_{i \in \bN} \liminf_{j \in \bN} \frac{\sd(\bold{x}_i,\bold{x}_j)^{p}}{(2K)^p} \\
\le & \underbrace{\max\Big\{\liminf_{i \in \bM}\frac{\norm{w-x_i}_\banX^{p}}{2^p},   \frac{|t-s-\omega(w,x)|^{\frac{p}{2}}}{2^p}\Big\} + \liminf_{i \in \bM} \liminf_{j \in \bM}\frac{\norm{x_i-x_j}_\banX^{p}}{K^p}}_{(*)}.
\end{align*}
Assume the first term in the maximum is larger. Then
\begin{align*}
(*) &= \liminf_{i \in \bM}\frac{\norm{w-x_i}_\banX^{p}}{2^p} + \liminf_{i \in \bM} \liminf_{j \in \bM} \frac{\norm{x_i-x_j}_\banX^{p}}{K^p} \overset{\eqref{eq:relaxed-p-umbel}}{\leq} \max\Big\{\norm{w}_\banX^{p},\sup_{i \in \bM}\norm{x_i}_\banX^{p}\Big\} \\
 &  \le \max\Big\{N(\bold{w})^{p},\sup_{i \in \bN} N(\bold{x}_i)^{p}\Big\} = \max\Big\{\sd(\bold{0},\bold{w})^p,\sup_{i \in \bN} \sd(\bold{0},\bold{x}_i)^p\Big\}
\end{align*}
so \eqref{eq:relaxed-p-umbel} holds in this case. Now assume the second term in the maximum is larger. Then
\begin{align*}
(*) &= \frac{|t-s-\omega(w,x)|^{\frac{p}{2}}}{2^p} + \liminf_{i \in \bM} \liminf_{j \in \bM} \frac{\norm{x_i-x_j}_\banX^{p}}{K^p} \\
&\leq 3^{\frac{p}{2}-1}\frac{|t|^{\frac{p}{2}}+|s|^{\frac{p}{2}}+|\omega(w,x)|^{\frac{p}{2}}}{2^p} + \frac{2^p}{K^{p}}\sup_{i \in \bN}\norm{x_i}_\banX^{p} \\
&\leq \frac{1}{4}(|t|^{\frac{p}{2}}+|s|^{\frac{p}{2}}+\norm{w}_\banX^{\frac{p}{2}}\norm{x}_\banX^{\frac{p}{2}}) +\frac{2^p}{K^{p}}\sup_{i \in \bN}\norm{x_i}_\banX^{p} \\
&\leq \frac{1}{4}(|t|^{\frac{p}{2}}+|s|^{\frac{p}{2}}+\frac{1}{2}(\norm{w}_\banX^p+\norm{x}_\banX^p)) + \frac{2^p}{K^{p}}\sup_{i \in \bN}\norm{x_i}_\banX^{p}\\
&= \frac{1}{4}|t|^{\frac{p}{2}}+\frac{1}{4}|s|^{\frac{p}{2}}+\frac{1}{8}\norm{w}_\banX^p + \Big(\frac{1}{8}+ \frac{2^p}{K^{p}}\Big)\sup_{i \in \bN}\norm{x_i}_\banX^{p} \\
&\leq \frac{1}{4}|t|^{\frac{p}{2}}+\frac{1}{4}|s|^{\frac{p}{2}}+\frac{1}{4}\norm{w}_\banX^p + \frac14 \sup_{i \in \bN}\norm{x_i}_\banX^{p}\\
&\leq \max\left\{|t|^{\frac{p}{2}},|s|^{\frac{p}{2}},\norm{w}_\banX^p,\sup_{i \in \bN}\norm{x_i}_\banX^{p}\right\}\\
&= \max\left\{N(\bold{w})^{p},\sup_{i \in \bN} N(\bold{x}_i)^{p}\right\} = \max\left\{\sd(\bold{0},\bold{w})^p,\sup_{i \in \bN} \sd(\bold{0},\bold{x}_i)^p\right\}.
\end{align*}
\end{proof}

In general, the value of $p$ for which $\bH(\omega_\banX)$ is infrasup-umbel $p$-convex cannot be taken smaller. When $p \in [2,\infty)$, $\banX = \ell_p \oplus_p (\ell_p)^*$ has property $(\beta_p)$ and take $\omega_{\banX}((x_1,y^*_1),(x_2,y^*_2)) = y^*_2(x_1) - y^*_1(x_2)$ (which is obviously nonzero). The map from $\ell_p$ to 
$\bH(\omega_\banX)$ defined by $x \mapsto ((x,0),0)$ is an isometric embedding, but Corollary \ref{cor:betap->lq-asm} implies that $\ell_p$ is not infrasup-umbel $q$-convex for any $q < p$.

Finally, we explain how we can construct more spaces that are umbel $p$-convex by taking finite $\ell_p$-sums of spaces satisfying the $p$-umbel inequality. The next lemma, which is a simple consequence of Ramsey's theorem, will be crucial to achieve this goal.

\begin{lemm} 
\label{lem:metpforkRamsey}
Every metric space $\metXd$ satisfying the $p$-umbel inequality \eqref{eq:pumbelmetric} satisfies the following formally stronger property: 

\noindent For any $w,z,x_i \in \metX$ with $\sup_{i \in \bN} \dX(z,x_i) < \infty$ and $\vep>0$, there exists an infinite subset $\bM$ of $\bN$ such that
\begin{equation}
\label{eq:strongpumbel}
\frac{1}{2^p}\sup_{i \in \bM} \dX(w,x_i)^p+\frac{1}{K^p}\sup_{i \neq j \in \bM} \dX(x_i,x_j)^p\le \frac{1}{2}\dX(z,w)^p + \frac{1}{2}\inf_{i \in \bM} \dX(z,x_i)^p + \vep.
\end{equation}
\end{lemm}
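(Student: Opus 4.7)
The plan is to use a Ramsey-type argument to replace the sequence $\{x_i\}_{i\in\bN}$ by a well-chosen infinite subsequence $\bM$ on which the relevant distance functions are essentially constant, and then apply the original $p$-umbel inequality \eqref{eq:pumbelmetric} to the extracted sub-sequence.

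First, I would observe that the hypothesis $\sup_i\dX(z,x_i)<\infty$ together with the triangle inequality forces all three sequences $\{\dX(w,x_i)\}_{i\in\bN}$, $\{\dX(z,x_i)\}_{i\in\bN}$, and the bivariate family $\{\dX(x_i,x_j)\}_{i\neq j}$ to be bounded in $\bR$, say by some $D<\infty$. Fix $\eta>0$ (to be chosen small in terms of $\vep$, $D$, $p$ and $K$). A standard diagonal extraction gives an infinite $\bM_0\subseteq\bN$ along which $\dX(w,x_i)\to a$ and $\dX(z,x_i)\to b$ for some $a,b\in[0,D]$. In particular, after passing to a further infinite subset $\bM_1\subseteq\bM_0$, one has $|\dX(w,x_i)-a|<\eta$ and $|\dX(z,x_i)-b|<\eta$ for every $i\in\bM_1$.

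Next, I would apply the infinite Ramsey theorem to the bivariate distances. Partition $[0,D]$ into finitely many intervals of length $<\eta$, and color a pair $\{i,j\}\in[\bM_1]^2$ by the index of the interval containing $\dX(x_i,x_j)$. Infinite Ramsey produces an infinite monochromatic $\bM\subseteq\bM_1$ on which $\dX(x_i,x_j)$ lies in a single such interval, so the bivariate map is constant up to error $\eta$; call this common approximate value $c$. On $\bM$ we therefore have the uniform approximations
\begin{align*}
\bigl|\sup_{i\in\bM}\dX(w,x_i)-\inf_{i\in\bM}\dX(w,x_i)\bigr| &< 2\eta, \\
\bigl|\sup_{i\in\bM}\dX(z,x_i)-\inf_{i\in\bM}\dX(z,x_i)\bigr| &< 2\eta, \\
\bigl|\sup_{i\neq j\in\bM}\dX(x_i,x_j)-\inf_{i\in\bM}\liminf_{j\in\bM}\dX(x_i,x_j)\bigr| &< 2\eta.
\end{align*}

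Now I would apply the $p$-umbel inequality \eqref{eq:pumbelmetric} to the same $w,z$ and to the re-indexed sequence $\{x_i\}_{i\in\bM}$ (the inequality is valid for any countable family, hence for our chosen subsequence). This yields
\[
\frac{1}{2^p}\inf_{i\in\bM}\dX(w,x_i)^p+\frac{1}{K^p}\inf_{i\in\bM}\liminf_{j\in\bM}\dX(x_i,x_j)^p\le \frac12\dX(z,w)^p+\frac12\sup_{i\in\bM}\dX(z,x_i)^p.
\]
Replacing $\inf$ by $\sup$ on the left and $\sup$ by $\inf$ on the right introduces, at each occurrence, an additive error bounded by the modulus of continuity of $t\mapsto t^p$ on $[0,D]$ evaluated at $2\eta$, i.e.~by a quantity $O_{p,D}(\eta)$. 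Choosing $\eta$ small enough that the cumulative error is at most $\vep$, the desired inequality \eqref{eq:strongpumbel} follows.

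The main technical issue is the Ramsey step: because the third term involves an unbounded collection of pairwise distances, neither diagonal extraction nor weak-type compactness alone suffices, and one must genuinely invoke Ramsey's theorem (after discretizing the bounded range of $\dX(x_i,x_j)$) to guarantee an infinite subset on which all pairwise distances are simultaneously close to a common value. Once this monochromatic subset is in hand, the remaining work is purely quantitative: carefully tracking how the $p$-th power amplifies the $\eta$-errors, using the boundedness of the family to keep this amplification uniform.
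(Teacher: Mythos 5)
Your proposal is correct and follows essentially the same strategy as the paper's proof: use Ramsey's theorem (after discretizing the bounded range) for the pairwise distances, a compactness/pigeonhole extraction for the two univariate distance sequences, obtain small-oscillation bounds on the resulting infinite subset $\bM$, and then apply the original $p$-umbel inequality to the subsequence indexed by $\bM$. The only cosmetic difference is that the paper applies the pigeonhole/Ramsey discretization directly to the already-coefficiented $p$-th powers $\tfrac{1}{2^p}\dX(w,x_i)^p$, $\tfrac{1}{2}\dX(z,x_i)^p$, $\tfrac{1}{K^p}\dX(x_i,x_j)^p$, so the oscillations are exactly $1/N$ and the error bookkeeping is immediate, whereas you discretize the raw distances and then invoke the uniform continuity of $t\mapsto t^p$ on a bounded interval to control the error.
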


\begin{proof}
Choose $N \in \bN$ large enough so that $\frac{3}{N}<\vep$ and let
\begin{equation*}
B \eqd \max\left\{\frac{1}{2^p}\sup_{i \in \bN} \dX(w,x_i)^p , \frac{1}{K^p}\sup_{i \neq j \in \bN} \dX(x_i,x_j)^p , \frac{1}{2}\sup_{i \in \bN} \dX(z,x_i)^p\right\}.
\end{equation*}
Consider the finite cover $[0,B] \subset \bigcup_{k=1}^{\lceil NB \rceil} [\frac{k-1}{N},\frac{k}{N}]$. Since
$$\frac{1}{2^p} \dX(w,x_i)^p, \frac{1}{K^p} \dX(x_i,x_j)^p, \frac{1}{2}\dX(z,x_i)^p \in [0,B]$$
for every $i \neq j \in \bN$, the pigeonhole principle and Ramsey's theorem gives us an infinite subset $\bM \subset \bN$ and natural numbers $k_1, k_2, k_3 \leq \lceil NB \rceil$ such that, for every $i \neq j \in \bM$,
\begin{align*}
\frac{1}{2^p} \dX(w,x_i)^p \in \left[\frac{k_1-1}{N},\frac{k_1}{N}\right], \qquad \frac{1}{2}\dX(z,x_i)^p \in \left[\frac{k_2-1}{N},\frac{k_2}{N}\right],
\end{align*}
and 
\begin{equation*}
\frac{1}{K^p} \dX(x_i,x_j)^p \in \left[\frac{k_3-1}{N},\frac{k_3}{N}\right].
\end{equation*}
Therefore,
\begin{align*}
\frac{1}{2^p} (\sup_{i \in \bM} \dX(w,x_i)^p - \inf_{i \in \bM} \dX(w,x_i)^p) \leq \frac{1}{N}, \qquad \frac{1}{2} (\sup_{i \in \bM} \dX(z,x_i)^p - \inf_{i \in \bM} \dX(z,x_i)^p) \leq \frac{1}{N},
\end{align*}
and
\begin{equation*}
\frac{1}{K^p} (\sup_{i \neq j \in \bM} \dX(x_i,x_j)^p - \inf_{i \neq j \in \bM} \dX(x_i,x_j)^p) \leq \frac{1}{N}.
\end{equation*}
Then we apply \eqref{eq:pumbelmetric} to $w,z,\{x_i\}_{i \in \bM}$ together with the inequalities above and get
\begin{align*}
\frac{1}{2^p}\sup_{i \in \bM} \dX(w,x_i)^p+\frac{1}{K^p}\sup_{i \neq j \in \bM} \dX(x_i,x_j)^p &\leq  \frac{1}{2^p}\inf_{i \in \bM} \dX(w,x_i)^p + \frac{1}{N} + \frac{1}{K^p} \inf_{i \neq j \in \bM}\dX(x_i,x_j)^p + \frac{1}{N} \\
&\overset{\eqref{eq:pumbelmetric}}{\leq} \frac{2}{N} + \frac{1}{2}\dX(z,w)^p + \frac{1}{2}\sup_{i \in \bM} \dX(z,x_i)^p \\
&\leq \frac{3}{N} + \frac{1}{2}\dX(z,w)^p + \frac{1}{2}\inf_{i \in \bM} \dX(z,x_i)^p \\
&\leq \vep + \frac{1}{2}\dX(z,w)^p + \frac{1}{2}\inf_{i \in \bM} \dX(z,x_i)^p.
\end{align*}
\end{proof}

A consequence of the theorem below, whose proof requires Ramsey's theorem via Lemma \ref{lem:metpforkRamsey}, is that a finite $\ell_p$-sum $(\sum_{i=1}^j \metX_i)_{\ell_p}$ is umbel $p$-convex whenever $\{\metX_i\}_{i=1}^j$ are metric spaces satisfying the $p$-umbel inequality for some universal constant $K > 0$. It is worth pointing out that an arbitrary $\ell_p$-sum of metric spaces which are Markov $p$-convex (with some universal Markov convexity constant) is Markov $p$-convex.

\begin{theo}
Let $p \in [1,\infty)$ and let $\metXd,\metYd$ be metric spaces satisfying the $p$-umbel inequality \eqref{eq:pumbelmetric} for some constant $K > 0$. Then $\metX \oplus_p \metY$ satisfies the $p$-umbel inequality \eqref{eq:pumbelmetric} with constant $K$.
\end{theo}

\begin{proof}
Let $(w^1,w^2),(z^1,z^2)\in \metX \oplus_p \metY$ and $\{(x^1_i,x^2_i)\}_{i \in \bN}\subseteq \metX \oplus_p \metY$. If the right hand side of \eqref{eq:pumbelmetric} is infinite, there is nothing to prove, so assume it is finite. Let $\vep > 0$ be arbitrary. Then by Lemma \ref{lem:metpforkRamsey}, we can find $\bM \in [\bN]^\omega$ such that
\begin{equation*} 
\frac{1}{2^p}\sup_{i \in \bM} \dX(w^1,x^1_i)^p+\frac{1}{K^p}\sup_{i \neq j \in \bM} \dX(x^1_i,x^1_j)^p\le \frac{1}{2}\dX(z^1,w^1)^p + \frac{1}{2}\inf_{i \in \bM} \dX(z^1,x^1_i)^p+\vep
\end{equation*}
and
\begin{equation*} 
\frac{1}{2^p}\sup_{i \in \bM} \dY(w^2,x^2_i)^p+\frac{1}{K^p}\sup_{i \neq j \in \bM} \dY(x^2_i,x^2_j)^p \le \frac{1}{2}\dY(z^2,w^2)^p + \frac{1}{2}\inf_{i \in \bM} \dY(z^2,x^2_i)^p + \vep.
\end{equation*}
Adding these two equations yields
\begin{align} \nonumber
\frac{1}{2^p}\left(\sup_{i \in \bM} \dX(w^1,x^1_i)^p+\sup_{i \in \bM} \dY(w^2,x^2_i)^p\right)+\frac{1}{K^p}\left(\sup_{i \neq j \in \bM} \dX(x^1_i,x^1_j)^p + \sup_{i \neq j \in \bM} \dY(x^2_i,x^2_j)^p\right) \\ \label{eq:auxsum}
\le \frac{1}{2}(\dX(z^1,w^1)^p+\dY(z^2,w^2)^p) + \frac{1}{2}\left(\inf_{i \in \bM} \dX(z^1,x^1_i)^p+\inf_{i \in \bM} \dY(z^2,x^2_i)^p\right) + 2\vep.
\end{align}
Then using the definition of the metric $\dX \oplus_p \dY$, we get
\begin{align*}
\frac{1}{2^p} & \inf_{i \in \bN} \dX \oplus_p \dY((w^1,w^2),(x^1_i,x^2_i))^p+\frac{1}{K^p}\inf_{i \in \bN}\liminf_{j \to \infty} \dX \oplus_p \dY((x^1_i,x^2_i),(x^1_j,x^2_j))^p \\
\le  & \frac{1}{2^p}\sup_{i \in \bM} (\dX(w^1,x^1_i)^p+ \dY(w^2,x^2_i)^p)+\frac{1}{K^p}\sup_{i \neq j \in \bM} (\dX(x^1_i,x^1_j)^p + \dY(x^2_i,x^2_j)^p) \\
\leq & \frac{1}{2^p}\left(\sup_{i \in \bM} \dX(w^1,x^1_i)^p+\sup_{i \in \bM} \dY(w^2,x^2_i)^p\right)+\frac{1}{K^p}\left(\sup_{i \neq j \in \bM} \dX(x^1_i,x^1_j)^p + \sup_{i \neq j \in \bM} \dY(x^2_i,x^2_j)^p\right) \\
\overset{\eqref{eq:auxsum}}{\leq} & \frac{1}{2}(\dX(z^1,w^1)^p+\dY(z^2,w^2)^p) + \frac{1}{2}\left(\inf_{i \in \bM} \dX(z^1,x^1_i)^p+\inf_{i \in \bM} \dY(z^2,x^2_i)^p\right) + 2\vep \\
\leq & \frac{1}{2}(\dX(z^1,w^1)^p+\dY(z^2,w^2)^p) + \frac{1}{2}\inf_{i \in \bM} (\dX(z^1,x^1_i)^p+\dY(z^2,x^2_i)^p) + 2\vep\\
\leq & \frac{1}{2}\dX \oplus_p \dY((z^1,x^2),(w^1,w^2))^p+ \frac{1}{2}\sup_{i \in \bN} \dX \oplus_p \dY((z^1,z^2),(x^2_i,x^1_i))^p + 2\vep.
\end{align*}
Since $\vep>0$ was arbitrary, inequality \eqref{eq:pumbelmetric} follows.
\end{proof}

\section{Markov and diamond convexity of Heisenberg groups}
\label{sec:Heisenberg-parallelogram}

In this section we fulfill our promise from Section \ref{sec:examples} and show that Heisenberg groups over $p$-uniformly convex Banach spaces are Markov $2p$-convex. This fact will follow from a ``parallelogram convexity inequality" analogous to the following parallelogram inequality holding in a Banach space $\banX$ that is $p$-uniformly convex with constant $K$: for all $x,y\in \banX$
	\begin{equation}\label{eq:UCmod}
		\frac{\norm{x}_\banX^p+\norm{x-y}_\banX^p}{2} \ge \bnorm{\frac{y}{2}}^p_\banX + \frac{1}{K^p}\bnorm{x-\frac{y}{2}}^p_\banX.
	\end{equation}
Inequality \eqref{eq:UCmod} can be derived easily from inequality \eqref{eq:p-convex} (and vice versa).
	
\begin{prop}\label{prop:parallelogram}
Let $p \in [2,\infty)$ and $\omega_\banX$ be any non-zero, antisymmetric, bounded bilinear form on a $p$-uniformly convex Banach space $\banX$. Then, there is a constant $C:=C(\banX,\omega_\banX)>0$ and a Koranyi-type norm $N_{p,\lambda}$ for some $\lambda:=\lambda(\banX,\omega_\banX)>0$ such that for every $\bold{a} = (a,s), \bold{b} = (b,t) \in \bH(\omega_\banX)$,
\begin{equation} \label{eq:parallelogram}
\frac{1}{2}N_{p,\lambda}(\bold{a})^{2p} + \frac{1}{2}N_{p,\lambda}(\bold{b}^{-1}\bold{a})^{2p} \geq  N_{p,\lambda}(\delta_{1/2}(\bold{b}))^{2p} + \frac{1}{C^{2p}} N_{p,\lambda}(\delta_{1/2}(\bold{b})^{-1}\bold{a})^{2p}.
\end{equation}
\end{prop}

\begin{proof}
Assume that $\banX$ is $p$-uniformly convex with constant $K$. Let $\omega \eqd \omega_\banX$ and $N\eqd N_{p,\lambda}$, where $\lambda^{2p} \eqd \left(\frac{1}{3}+\frac{1}{3^p \cdot 6}\right)^{-1}\frac{2^{1-p}}{K^p\|\omega\|^p}$ and $\|\omega\| < \infty$ is the least constant $B$ satisfying $|\omega(a,b)| \leq B\|a\|_\banX\|b\|_\banX$.
We have
\begin{align}
\label{eq:auxSean1}
\nonumber \frac{1}{2} N(\bold{a})^{2p} + \frac{1}{2}N(\bold{b}^{-1}\bold{a})^{2p}  =  \frac{1}{2}\left\|a\right\|_\banX^{2p} + \frac{\lambda^{2p} }{2}|s|^p + &\frac{1}{2}\left\|a-b\right\|_\banX^{2p} + \frac{\lambda^{2p} }{2}|s-t+\omega(a,b)|^p \\
\nonumber \geq  \left(\frac{1}{2}\left\|a\right\|_\banX^p +  \frac{1}{2}\left\|a-b\right\|_\banX^p\right)^2 + & \frac{\lambda^{2p} }{2}|s|^p + \frac{\lambda^{2p} }{2}|s-t+\omega(a,b)|^p \quad (\text{convexity})\\
\stackrel{\eqref{eq:UCmod}}{\geq}  \underbrace{\left(\left\|\frac{b}{2}\right\|_\banX^p +  \frac{1}{K^p}\left\|a-\frac{b}{2}\right\|_\banX^p\right)^2}_{\alpha} + & \frac{\lambda^{2p} }{2}|s|^p + \frac{\lambda^{2p} }{2}|s-t+\omega(a,b)|^p.
\end{align}
Since $\omega$ is antisymmetric and bounded,
\begin{align}
\label{eq:auxSean2}
\nonumber 
\alpha & =  \left\|\frac{b}{2}\right\|_\banX^{2p} +  \frac{1}{K^{2p}}\left\|a-\frac{b}{2}\right\|_\banX^{2p} +  \frac{2}{K^p}\left(\left\|\frac{b}{2}\right\|_\banX\left\|a-\frac{b}{2}\right\|_\banX\right)^p\\	
\nonumber	& \geq \left\|\frac{b}{2}\right\|_\banX^{2p} +  \frac{1}{K^{2p}}\left\|a-\frac{b}{2}\right\|_\banX^{2p} + \frac{2}{K^p\|\omega\|^p}\left|\omega\left(\frac{b}{2},a-\frac{b}{2}\right)\right|^p\\
\nonumber	& = \left\|\frac{b}{2}\right\|_\banX^{2p} + \frac{1}{K^{2p}}\left\|a-\frac{b}{2}\right\|_\banX^{2p} + \frac{2^{1-p}}{K^p\|\omega\|^p}|\omega(a,b)|^p \\
    & = \left\|\frac{b}{2}\right\|_\banX^{2p} + \frac{1}{K^{2p}}\left\|a-\frac{b}{2}\right\|_\banX^{2p} + \lambda^{2p} \big(\frac{1}{3}+\frac{1}{3^p \cdot 6}\Big)|\omega(a,b)|^p
\end{align}
where we have used the definition of $\lambda$ in the last equality. Incorporating \eqref{eq:auxSean2} into \eqref{eq:auxSean1} we thus have,
\begin{align}
\label{eq:auxSean3}
\nonumber\frac{1}{2} N(\bold{a})^{2p} & + \frac{1}{2}N(\bold{b}^{-1}\bold{a})^{2p}  \geq \left\| \frac{b}{2}\right\|_\banX^{2p} + \frac{1}{K^{2p}}\left\| a - \frac{b}{2}\right\|_\banX^{2p}\\
& + \lambda^{2p}  \underbrace{\Big(\frac{|\omega(a,b)|^p + |s|^p + |s-t+\omega(a,b)|^p}{3} + \frac{|\omega(a,b)|^p}{3^{p} \cdot 6} + \frac{|s|^p}{6} + \frac{|s-t+\omega(a,b)|^p}{6}\Big)}_{\beta}.
\end{align}
We now proceed to estimate $\beta$ as follows,
\begin{align}
\label{eq:auxSean4}
\nonumber \beta \ge & \left|\frac{t}{3}\right|^p + \frac{|\omega(a,b)|^p}{3^{p} \cdot 6} + \frac{|s|^p}{6} + \frac{|s-t+\omega(a,b)|^p}{6} \quad (\text{convexity and triangle inequality}) \\
\nonumber \ge & \left|\frac{t}{3}\right|^p + \frac{4^p}{3^p \cdot 6}\left(\left|\frac{\omega(a,b)}{4}\right|^p + \left|\frac{3s}{4}\right|^p + \left|\frac{s}{4}-\frac{t}{4}+\frac{\omega(a,b)}{4}\right|^p\right) \\
 \ge &  \left|\frac{t}{4}\right|^p + \frac{4^p}{3^{2p-1} \cdot 6}\left|s-\frac{t}{4}+\frac{\omega(a,b)}{2}\right|^p \quad (\text{convexity and triangle inequality}).
\end{align}
If we let $C \eqd \max\left\{K, \Big(\frac{3^{2p-1} \cdot 6}{4^p}\Big)^{\frac{1}{2p}}\right\}$, combining \eqref{eq:auxSean4} with \eqref{eq:auxSean3}, we have
\begin{align*}
\label{eq:auxSean5}
\frac{1}{2} N(\bold{a})^{2p}  + \frac{1}{2}N(\bold{b}^{-1}\bold{a})^{2p}  & \geq \left\| \frac{b}{2}\right\|_\banX^{2p} + \frac{1}{C^{2p}}\left\| a - \frac{b}{2}\right\|_\banX^{2p} + \lambda^{2p}   \left|\frac{t}{4}\right|^p  + \frac{\lambda^{2p} }{C^{2p}}\left|s-\frac{t}{4}+\frac{\omega(a,b)}{2}\right|^p\\
   		& \ge N(\delta_{1/2}(\bold{b}))^{2p} + \frac{1}{C^{2p}}N(\delta_{1/2}(\bold{b})^{-1}\bold{a})^{2p}.
\end{align*}
This completes the proof of \eqref{eq:parallelogram}.
\end{proof}

The following theorem follows from the fact that the $2p$-fork inequality is valid for the quasi-metric induced by a quasi-norm satisfying \eqref{eq:parallelogram}.

\begin{theo}
\label{thm:Sean}
For any $p \in [2,\infty)$ and any non-zero, antisymmetric, bounded bilinear form $\omega_\banX$ on a $p$-uniformly convex Banach space $\banX$, $(\bH(\omega_\banX),\sd_{cc})$ is Markov $2p$-convex.
\end{theo}

\begin{proof}
Let $p$, $\banX$, $\omega_\banX$ be as in the statement. Since Markov $2p$-convexity is a bi-Lipschitz invariant and the proof in \cite{MendelNaor13} showing that Markov $p$-convexity follows from the $p$-fork inequality is valid for quasi-metrics, it suffices to prove that $(\bH(\omega_\banX),\sd)$ is Markov $2p$-convex for some equivalent quasi-metric $\sd$. Because of this, we may again assume $\banX$ is equipped with a uniformly $p$-convex norm with constant $K$. Therefore, it suffices to exhibit a quasi-metric $\sd$  that satisfies the $2p$-fork inequality. In the remainder of this proof, we will let $N\eqd N_{p,\lambda}$ the Koranyi-type norm from Proposition \ref{prop:parallelogram} and $\sd \eqd \sd_{\lambda,p}$ the quasi-metric it induces. We will use \eqref{eq:parallelogram} to prove the $2p$-fork inequality:
\begin{equation} \label{eq:fork}
\frac{\sd(\bold{w},\bold{x})^{2p}}{2^{2p+1}}+\frac{\sd(\bold{w},\bold{y})^{2p}}{2^{2p+1}}+\frac{\sd(\bold{x},\bold{y})^{2p}}{(4C'C)^{2p}}\le \frac{1}{2}\sd(\bold{z},\bold{w})^{2p}+\frac{1}{4}\sd(\bold{z},\bold{x})^{2p}+\frac{1}{4}\sd(\bold{z},\bold{y})^{2p},
\end{equation}
where $C'$ is the quasi-triangle inequality constant of $\sd$.

First apply \eqref{eq:parallelogram} with $\bold{a} = \bold{z}$ and $\bold{b} = \bold{x}$ to obtain
\begin{equation*}
\frac{1}{2}N(\bold{z})^{2p} + \frac{1}{2}N(\bold{x}^{-1}\bold{z})^{2p} \geq N(\delta_{1/2}(\bold{x}))^{2p} + \frac{1}{C^{2p}}N(\delta_{1/2}(\bold{x})^{-1}\bold{z})^{2p}.
\end{equation*}
Then apply \eqref{eq:parallelogram} with $\bold{a} = \bold{z}$ and $\bold{b} = \bold{y}$ to obtain
\begin{equation*}
\frac{1}{2}N(\bold{z})^{2p} + \frac{1}{2}N(\bold{y}^{-1}\bold{z})^{2p} \geq N(\delta_{1/2}(\bold{y}))^{2p} + \frac{1}{C^{2p}}N(\delta_{1/2}(\bold{y})^{-1}\bold{z})^{2p}.
\end{equation*}
Averaging these two inequalities and using the definition and homogeneity of $\sd$ yields
\begin{align*}
\frac{\sd(\bold{0},\bold{x})^{2p}}{2^{2p+1}}+\frac{\sd(\bold{0},\bold{y})^{2p}}{2^{2p+1}} & + \frac{\sd(\bold{z},\delta_{1/2}(\bold{x}))^{2p}}{2C^{2p}} + \frac{\sd(\bold{z},\delta_{1/2}(\bold{y}))^{2p}}{2C^{2p}} \\
& \le \frac{1}{2}\sd(\bold{z},\bold{0})^{2p}+\frac{1}{4}\sd(\bold{z},\bold{x})^{2p}+\frac{1}{4}\sd(\bold{z},\bold{y})^{2p}.
\end{align*}
Then by convexity, the $C'$-quasi-triangle inequality of $\sd$, and homogeneity of $\sd$, we get
\begin{align*}
\frac{\sd(\bold{0},\bold{x})^{2p}}{2^{2p+1}}+\frac{\sd(\bold{0},\bold{y})^{2p}}{2^{2p+1}} & + \frac{\sd(\bold{x},\bold{y})^{2p}}{(4C'C)^{2p}} \le \frac{1}{2}\sd(\bold{z},\bold{0})^{2p}+\frac{1}{4}\sd(\bold{z},\bold{x})^{2p}+\frac{1}{4}\sd(\bold{z},\bold{y})^{2p}
\end{align*}
This proves \eqref{eq:fork} for $\bold{w} = \bold{0}$. The general inequality follows from left-invariance.
\end{proof}

Two new metric invariants, called diamond convexity and graphical diamond convexity, were introduced in \cite{EMN}. Diamond convexity is an inequality involving stochastic processes (like Markov convexity), and graphical diamond convexity is a deterministic Poincar\'e-type inequality that refers explicitly to diamond graphs. In \cite{EMN}, it was shown that if a metric space $\metX$ is Markov $p$-convex, then $\metX$ is diamond $p$-convex, and hence the Heisenberg groups as in Theorem \ref{thm:Sean} are diamond $2p$-convex. It is currently not known whether Markov $p$-convexity or diamond $p$-convexity implies graphical diamond $p$-convexity. However it was shown that diamond $p$-convexity (cf. \cite{EMN}) and graphical diamond $p$-convexity (cf. \cite[Chapter 2]{Eskenazis_PhD}) follow from the following \emph{$p$-short diagonals inequality for uniform convexity with constant $K\in(0,\infty)$}: for all $w,x,y,z\in \metXd$ 
	\begin{equation}\label{eq:short-q}
		\frac{1}{2^p}\dX(w,y)^p + \frac{1}{(2K)^p}\dX(x,z)^p \le \frac14\dX(w,x)^p + \frac14\dX(x,y)^p + \frac14\dX(y,z)^p + \frac14\dX(z,w)^p  
	\end{equation}
Since, as we will show, the $p$-short diagonals inequality for uniform convexity is valid for the quasi-metric induced by a quasi-norm satisfying \eqref{eq:parallelogram}, we have:

\begin{theo}
\label{thm:diamond}
For any $p \in [2,\infty)$ and any non-zero, antisymmetric, bounded bilinear form $\omega_\banX$ on a $p$-uniformly convex Banach space $\banX$, $(\bH(\omega_\banX),\sd_{cc})$ is graphical diamond $2p$-convex.
\end{theo}

\begin{proof}
The setup is the same as in the proof of Theorem \ref{thm:Sean}. The proof showing that graphical diamond $p$-convexity follows from \eqref{eq:short-q} is valid for quasi-metrics (see \cite[Proposition 2.9]{Eskenazis_PhD} for instance). It thus remains to show that \eqref{eq:parallelogram} implies that 
	\begin{equation} \label{eq:short-2p}
		\frac{1}{2^{2p}}\sd(\bold{w},\bold{y})^{2p}+\frac{1}{(2C'C)^{2p}}\sd(\bold{x},\bold{z})^{2p} \le \frac14\sd(\bold{w},\bold{x})^{2p} + \frac14 \sd(\bold{x},\bold{y})^{2p} + \frac14 \sd(\bold{y},\bold{z})^{2p} + \frac14 \sd(\bold{z},\bold{w})^{2p},
	\end{equation}
where $C'$ is the quasi-triangle inequality constant of $\sd$.

First plug in $\bold{a} = \bold{w}^{-1}\bold{z}$ and $\bold{b} = \bold{w}^{-1}\bold{y}$ in \eqref{eq:parallelogram} to obtain
\begin{align*}
N(\delta_{1/2}(\bold{w}^{-1}\bold{y}))^{2p} + \frac{1}{C^{2p}}N([\delta_{1/2}(\bold{w}^{-1}\bold{y})]^{-1}\bold{w}^{-1}\bold{z})^{2p} & \le \frac{1}{2}N(\bold{w}^{-1}\bold{z})^{2p} + \frac{1}{2}N((\bold{w}^{-1}\bold{y})^{-1}\bold{w}^{-1}\bold{z})^{2p} \\\
	& = \frac{1}{2}N(\bold{w}^{-1}\bold{z})^{2p} + \frac{1}{2}N(\bold{y}^{-1}\bold{z})^{2p}.
\end{align*}
Then plug in $\bold{a} = \bold{w}^{-1}\bold{x}$ and $\bold{b} = \bold{w}^{-1}\bold{y}$ in \eqref{eq:parallelogram} and get 
\begin{align*}
N(\delta_{1/2}(\bold{w}^{-1}\bold{y}))^{2p} + \frac{1}{C^{2p}}N([\delta_{1/2}(\bold{w}^{-1}\bold{y})]^{-1}\bold{w}^{-1}\bold{x})^{2p} & \le \frac{1}{2}N(\bold{w}^{-1}\bold{x})^{2p} + \frac{1}{2}N((\bold{w}^{-1}\bold{y})^{-1}\bold{w}^{-1}\bold{x})^{2p} \\\
	& = \frac{1}{2}N(\bold{w}^{-1}\bold{x})^{2p} + \frac{1}{2}N(\bold{y}^{-1}\bold{x})^{2p}.
\end{align*}
Averaging the two inequalities above and using the definition and homogeneity of $\sd$ yields
\begin{align*}
\frac{\sd(\bold{w},\bold{y})^{2p}}{2^{2p}} & + \frac{1}{C^{2p}}\Big( \frac{\sd(\bold{w}^{-1}\bold{z},\delta_{1/2}(\bold{w}^{-1}\bold{y}))^{2p} + \sd(\bold{w}^{-1}\bold{x},\delta_{1/2}(\bold{w}^{-1}\bold{y}))^{2p}}{2} \Big) \\
& \le \frac14 \sd(\bold{z},\bold{w})^{2p} + \frac14 \sd(\bold{z},\bold{y})^{2p} + \frac14\sd(\bold{x},\bold{w})^{2p} + \frac14 \sd(\bold{x},\bold{y})^{2p}.
\end{align*}
Then by convexity, the $C'$-quasi-triangle inequality of $\sd$, and the left-invariance of $\sd$, we get
\begin{align*}
\frac{\sd(\bold{w},\bold{y})^{2p}}{2^{2p}} + \frac{\sd(\bold{x},\bold{z})^{2p}}{(2C'C)^{2p}}  \le \frac14\sd(\bold{w},\bold{x})^{2p} + \frac14 \sd(\bold{x},\bold{y})^{2p} + \frac14 \sd(\bold{y},\bold{z})^{2p} + \frac14 \sd(\bold{z},\bold{w})^{2p},
\end{align*}
which is exactly \eqref{eq:short-2p}.
\end{proof}

\section{Relaxations of the fork inequality and of Markov convexity}
\label{sec:relaxed-Markov}

In this section, we discuss some natural relaxations of the fork inequality and of Markov convexity. The following is a local analogue of umbel convexity.

\begin{defi}
We will say that a metric space $\metXd$ is \emph{fork $p$-convex} if there exists $\Pi>0$ such that for all $k\ge 1$ and all $f\colon \bin{2^k}\to \metX$,
\begin{align}
\label{eq:forkqconvex}
\nonumber \sum_{s=1}^{k-1}\frac{1}{2^{k-1-s}}\sum_{t=1}^{2^{k-1-s}} & \min_{\vep\in\cube{t2^{s+1}-2^s}}\min_{\delta,\delta'\in\cube{{2^s}-1}}\frac{\dX(f(\vep,-1,\delta),f(\vep,1,\delta'))^p}{2^{sp}}\\			
					 & \le \Pi^p \frac{1}{2^k}\sum_{\ell=1}^{2^k}\max_{\vep\in \cube{\ell}}\dX(f(\vep_1,\dots,\vep_{\ell-1}),f(\vep_1,\dots, \vep_{\ell}))^p,
\end{align}
and we will denote by $\Pi_p^f(\metX)$ the least constant $\Pi$ such that \eqref{eq:forkqconvex} holds.
\end{defi}

We will see that the fork $p$-convexity inequality \eqref{eq:forkqconvex} follows from the following relaxation of the $p$-fork inequality:

\noindent For all $w,x,y,z\in \met X$, 
\begin{equation}
\label{eq:relaxed-q-fork}
\frac{1}{2^p}\min\{\dX(w,x)^p,\dX(w,y)^p\}+\frac{\dX(x,y)^p}{4^{p}K^p}\le \frac12 \dX(z,w)^p+\frac12 \max\{\dX(z,x)^p,\dX(z,y)^p\}
\end{equation} 

The fact that fork $p$-convexity implies umbel $p$-convexity is not completely immediate due to the limit inferior in the definition of umbel $p$-convexity. To prove it, we first need a technical lemma.

\begin{lemm} \label{lem:binaryembedding}
For each $k \geq 0$, let $V_k$ denote the subset of $\wtree{\bN}{\leq k} \times \wtree{\bN}{\leq k}$ consisting of all pairs $(\nbar,\mbar)$ such that $\nbar$ extends $\mbar$ (abbreviated by $\nbar \preceq \mbar$ and meaning that $\nbar=(n_1,\dots, n_i)$ and $\mbar=(m_1,\dots, m_j)$ satisfy $i\le j$ and $n_1=m_1, \dots, n_i=m_i$). For every $k \in \bN$ and function $J \colon V_k \to \bN$, there exists a map $\phi:=\phi(k,J) \colon \bin{k} \to \tree_k^\omega$ satisfying the following property 
$$ (\star)\begin{cases}
				\phi \text{ is a height and extension preserving graph morphism, }\\
				\text{ and }\\
				\text{for every }\vep,\delta \text{ for which } (\vep,1,\delta) \in \bin{k}, 
				\text{ there exists} \\
				\text{an integer }j' = j'(\vep,\delta) \geq J(\phi(\vep),\phi(\vep,1,\delta)) \text{ such that, for every } \delta' \text{ for which} \\ (\vep,-1,\delta') \in \bin{k}, \text{ there exists }
				 \bar{\eta} = \bar{\eta}(\delta') \in \tree_k^\omega \text{ such that 
				 } \phi(\vep,-1,\delta') = (\phi(\vep),j',\bar{\eta}).
			\end{cases}$$
\end{lemm}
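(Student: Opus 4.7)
The plan is to construct $\phi$ by a depth-first recursion on $\bin{k}$ that processes the $+1$-subtree of each node before committing to the image of its $-1$-child. Set $\phi(\emptyset) := \emptyset$. Assuming $\phi(\vep)$ has been defined at some $\vep$ of height $< k$, write $\phi(\vep) = (n_1,\ldots,n_l)$ (with the convention $n_l := 0$ if $l = 0$); it remains to produce two distinct integers $a,b > n_l$ and declare $\phi(\vep,+1) := (\phi(\vep),a)$, $\phi(\vep,-1) := (\phi(\vep),b)$. First put $a := n_l + 1$ and recursively build $\phi$ on the entire $+1$-subtree rooted at $(\vep,+1)$; since $\bin{k}$ is finite, this inner recursion terminates. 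At that point every value $\phi(\vep,+1,\delta)$ with $(\vep,+1,\delta) \in \bin{k}$ (finitely many $\delta$, including $\delta = \emptyset$) is known, so the integer
\[
M_\vep := \max\{J(\phi(\vep),\phi(\vep,+1,\delta)) : (\vep,+1,\delta) \in \bin{k}\}
\]
is well-defined. Choose $b := \max(a,M_\vep) + 1$, and then recursively build $\phi$ on the $-1$-subtree rooted at $(\vep,-1)$.

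It remains to verify $(\star)$. Each recursive step appends exactly one coordinate to a previously defined image, so $\phi$ is automatically height-preserving and extension-preserving, and moreover the images of the $+1$- and $-1$-subtrees at $\vep$ are disjoint since they are distinguished by the coordinate immediately after $\phi(\vep)$ (namely $a$ versus $b$). For any $\vep$ and $\delta$ with $(\vep,+1,\delta) \in \bin{k}$, set $j'(\vep,\delta) := b$; the choice of $b$ gives $j' \geq M_\vep \geq J(\phi(\vep),\phi(\vep,+1,\delta))$. For any $\delta'$ with $(\vep,-1,\delta') \in \bin{k}$, extension-preservation forces $\phi(\vep,-1,\delta')$ to extend $\phi(\vep,-1) = (\phi(\vep),b)$, yielding a tuple $\bar{\eta}(\delta')$ with $\phi(\vep,-1,\delta') = (\phi(\vep),b,\bar{\eta}(\delta'))$, exactly as required. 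The only subtlety is scheduling the recursion so that the $+1$-subtree is fully processed \emph{before} $b$ is chosen, so that $M_\vep$ is available as a finite quantity; after that, the unbounded supply of integers in $\bN$ exceeding any prescribed bound makes the selection of $b$ routine, and no serious obstacle arises.
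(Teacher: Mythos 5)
Your proof is correct and takes essentially the same approach as the paper's: process the $+1$-subtree fully before committing to the first coordinate of the $-1$-child's image, so that the maximum $M_\vep$ of the relevant $J$-values is available when that coordinate is chosen. The paper packages this as an induction on $k$ (applying the inductive hypothesis twice, with shifted index sets $[\{2,3,\dots\}]$ and $[\{j_0+1,\dots\}]$), while you present it as a direct depth-first recursive construction, but the underlying idea is identical.
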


\begin{proof}
The proof is by induction on $k$. The base case $k=0$ is vacuous. Suppose the lemma holds for some $k \geq 0$. Let $J \colon V_{k+1} \to \bN$ be any function. Observe that for all $r\ge 1$, $[\{r,r+1,\dots\}]^{\le k}$ equipped with the natural tree order is isomorphic to $[\bN]^{\le k}$. Denote by $V_k(r)$ the subset of $[\{r,r+1,\dots\}]^{\le k} \times [\{r,r+1,\dots\}]^{\le k}$ consisting of all pairs $(\nbar,\mbar)$ such that $\nbar$ extends $\mbar$. Define a function $J_1 \colon V_k(2) \to \bN$ by $J_1(\nbar,\mbar) \eqd J((1,\nbar),(1,\mbar))$. Apply the inductive hypothesis to $J_1$ to obtain a function $\phi_1 \colon \bin{k} \to \big([\{2,3,\dots\}]^{\le k},\sd_\tree\big)$ satisfying $(\star)$. Set $j_0 \eqd \max\{J(\emptyset,(1,\phi_1(\delta))) \colon \delta \in \bin{k}\}$, and note that this maximum exists since $\bin{k}$ is finite. Define a function $J_0 \colon V_k(j_0+1) \to \bN$ by $J_0(\nbar,\mbar) \eqd J((j_0,\nbar),(j_0,\mbar))$. Apply the inductive hypothesis to $J_0$ to obtain a function $\phi_0 \colon \bin{k} \to \big([\{j_0+1,j_0+2,\dots\}]^{\le k},\sd_\tree\big)$ satisfying $(\star)$. Finally we define $\phi \colon \bin{k+1} \to \tree_{k+1}^\omega$ by
\begin{itemize}
    \item $\phi(\emptyset) \eqd \emptyset$,
    \item $\phi(1,\vep) \eqd (1,\phi_1(\vep))$, and
    \item $\phi(-1,\vep) \eqd (j_0,\phi_0(\vep))$.
\end{itemize}

We now check that $\phi$ satisfies the desired properties. Obviously, $\phi$ is a height and extension preserving graph morphism since both $\phi_1$ and $\phi_0$ are. Let $(\vep,1,\delta) \in \bin{k+1}$. If $\vep$ is of the form $(1,\vep')$, then $(\star)$ holds since it holds for $\phi_1$, and if $\vep$ is of the form $(-1,\vep')$, then $(\star)$ holds since it holds for $\phi_0$. It remains to consider $\vep = \emptyset$. In this case, we choose $j' \eqd j_0$ and for any $(-1,\delta') \in \bin{k+1}$, we choose $\bar{\eta} \eqd \phi_0(\delta')$. These choices witness the satisfaction of $(\star)$.
\end{proof}

\begin{prop}
Let $p \in(0,\infty)$. Every fork $p$-convex metric space $\metXd$ is umbel $p$-convex. Moreover, $\Pi_p^u(\metX) \leq \Pi_p^f(\metX)$.
\end{prop}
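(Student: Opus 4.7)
The plan is to deduce the umbel inequality for a given $f\colon\wtree{\bN}{\le 2^k}\to\metX$ by invoking fork $p$-convexity on the composition $f\circ\phi$, with $\phi\colon\bin{2^k}\to\tree^{\omega}_{2^k}$ supplied by Lemma \ref{lem:binaryembedding} for a function $J$ tailored to $f$ and an error $\epsilon>0$. (If $k=1$ the umbel left-hand side is an empty sum, so I assume $k\ge 2$.) Since $\phi$ is height- and extension-preserving, each image pair $(\phi(\vep_1,\dots,\vep_{\ell-1}),\phi(\vep_1,\dots,\vep_\ell))$ is an edge of $\tree_{2^k}^{\omega}$ at level $\ell$, giving the right-hand side comparison
\[
\frac{1}{2^k}\sum_{\ell=1}^{2^k}\max_{\vep\in\cube{\ell}}\dX\big(f(\phi(\vep_1,\dots,\vep_{\ell-1})),f(\phi(\vep_1,\dots,\vep_\ell))\big)^p \le \frac{1}{2^k}\sum_{\ell=1}^{2^k}\sup_{\nbar\in\wtree{\bN}{\ell}}\dX\big(f(n_1,\dots,n_{\ell-1}),f(n_1,\dots,n_\ell)\big)^p.
\]

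For the left-hand sides, I would argue term by term at each level $(s,t)$. Property $(\star)$ produces, for each $\vep$, an integer $j'(\vep)$ and a map $\delta\mapsto\bar{\eta}(\delta)\in\wtree{\bN}{2^s-1}$ such that $\phi(\vep,-1,\delta) = (\phi(\vep),j'(\vep),\bar{\eta}(\delta))$ for all admissible $\delta$, along with the lower bound $j'(\vep)\ge J(\phi(\vep),\phi(\vep,1,\delta'))$ for every admissible $\delta'$. Setting $\nbar:=\phi(\vep)$ and writing $\phi(\vep,1,\delta')=(\nbar,\bar{\delta}(\vep,\delta'))$ with $\bar{\delta}(\vep,\delta')\in\wtree{\bN}{2^s}$, the inner fork minimum is then bounded below by
\[
\min_{\delta'\in\cube{2^s-1}}\inf_{\bar{\eta}\in\wtree{\bN}{2^s-1}}\dX\big(f(\nbar,\bar{\delta}(\vep,\delta')),f(\nbar,j'(\vep),\bar{\eta})\big)^p.
\]
To bring in the umbel $\liminf_{j\to\infty}$, the function $J$ is specified as follows: for each pair $(\nbar,\bar{\beta})$ with $(\nbar,\bar{\beta})\in\wtree{\bN}{\le 2^k}$, let $N_0(\nbar,\bar{\beta},\epsilon)$ be the least integer $N$ such that $h_j\ge\liminf_{j'\to\infty}h_{j'}-\epsilon$ for every $j\ge N$, where $h_j:=\inf_{\bar{\eta}}\dX(f(\nbar,\bar{\beta}),f(\nbar,j,\bar{\eta}))^p$; such $N$ exists by the definition of $\liminf$. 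Define $J(\nbar,\mbar):=N_0(\nbar,\bar{\beta}_{\nbar,\mbar},\epsilon)$, where $\bar{\beta}_{\nbar,\mbar}$ is the suffix of $\mbar$ following the prefix $\nbar$. The condition $j'(\vep)\ge J(\phi(\vep),\phi(\vep,1,\delta'))$ then translates to $j'(\vep)\ge N_0(\nbar,\bar{\delta}(\vep,\delta'),\epsilon)$, and consequently
\[
\inf_{\bar{\eta}}\dX(f(\nbar,\bar{\delta}(\vep,\delta')),f(\nbar,j'(\vep),\bar{\eta}))^p \ge \liminf_{j\to\infty}\inf_{\bar{\eta}}\dX(f(\nbar,\bar{\delta}(\vep,\delta')),f(\nbar,j,\bar{\eta}))^p - \epsilon.
\]
Relaxing the minima over $\delta'$ and $\vep$ to infima over $\wtree{\bN}{2^s}$ and $\wtree{\bN}{t2^{s+1}-2^s}$ only decreases the value, so each inner fork term exceeds the corresponding inner umbel term by at most $\epsilon$.

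Summing over $(s,t)$ with the weights $2^{-(k-1-s)-sp}$, the total additive error is bounded by $\epsilon\sum_{s\ge 1}2^{-sp}=O(\epsilon)$. Combining with fork $p$-convexity and the right-hand side bound yields $(\textrm{umbel-LHS of }f)-O(\epsilon)\le \Pi_p^f(\metX)^p\,(\textrm{umbel-RHS of }f)$, and letting $\epsilon\to 0$ gives $\Pi_p^u(\metX)\le\Pi_p^f(\metX)$. The main delicate step will be reconciling the finite collection of $j'(\vep)$'s produced by $\phi$ with the $\liminf_{j\to\infty}$ appearing in the umbel inequality; this is exactly where the freedom in Lemma \ref{lem:binaryembedding} to prescribe $J$ arbitrarily large becomes essential.
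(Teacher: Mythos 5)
Your proof is correct and follows essentially the same approach as the paper's: tailor $J$ so the integers $j'(\vep)$ produced by Lemma \ref{lem:binaryembedding} nearly realize the $\liminf$ appearing in the umbel inner term, then compare the fork and umbel left- and right-hand sides term by term using property $(\star)$ together with the height- and extension-preserving nature of $\phi$. The only cosmetic difference is in the error accounting --- the paper allocates $\gamma/k$ per already-normalized inner term so the total error is $<\gamma$, while you allocate $\epsilon$ per unnormalized term and absorb the geometric sum $\sum_{s\ge 1} 2^{-sp}$ --- and both vanish as the parameter goes to zero.
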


\begin{proof}
Let $k\ge 1$ and $f: \wtree{\bN}{\le 2^k}\to \metX$ a map. Without loss of generality, we may assume the right-hand-side of \eqref{eq:umbel-p-convex} is finite. Let $\gamma>0$ be arbitrary. For each $(\nbar,(\nbar,\bar{\delta})) \in V_k$, choose $J(\nbar,(\nbar, \bar{\delta})) \in \bN$ such that, for all $j \geq J(\nbar,(\nbar,\bar{\delta}))$,
\begin{equation*}
    \liminf_{j\to\infty}\inf_{\stackrel{\bar{\eta}\in\wtree{\bN}{2^s-1}\colon}{(\nbar,j,\bar{\eta})\in \wtree{\bN}{\le 2^k}}}\frac{\dX(f(\nbar,\bar{\delta}),f(\nbar,j,\bar{\eta}))^p}{2^{sp}}
     \le \inf_{\stackrel{\bar{\eta}\in\wtree{\bN}{2^s-1}\colon}{(\nbar, j, \bar{\eta})\in \wtree{\bN}{\le 2^k}}} \frac{\dX(f(\nbar,\bar{\delta}),f(\nbar, j, \bar{\eta}))^p}{2^{sp}} + \frac{\gamma}{k}.
\end{equation*}
Now apply Lemma \ref{lem:binaryembedding} to the function $J$ defined as above to get a height and extension preserving graph morphism $\phi \colon \bin{2^k} \to \tree^\omega_{2^k}$ satisfying $(\star)$. Let $A$ denote the left-hand-side of the fork $p$-convexity inequality \eqref{eq:forkqconvex} applied to the map $f \circ \phi \colon \bin{2^k} \to \metX$, i.e.

\begin{align*}
    A \eqd \sum_{s=1}^{k-1}\frac{1}{2^{k-1-s}}\sum_{t=1}^{2^{k-1-s}} & \min_{\vep\in\cube{t2^{s+1}-2^s}}\min_{\delta,\delta'\in\cube{{2^s}-1}} \frac{\dX(f(\phi(\vep,-1,\delta)),f(\phi(\vep,1,\delta')))^p}{2^{sp}}.
\end{align*}
Let $B$ denote the left-hand-side of the umbel $p$-convexity inequality \eqref{eq:umbel-p-convex} applied to the map $f\colon \wtree{\bN}{\le 2^k}\to \metX$, i.e.

\begin{align*}
    B \eqd \sum_{s=1}^{k-1}\frac{1}{2^{k-1-s}}\sum_{t=1}^{2^{k-1-s}}\inf_{\nbar\in\wtree{\bN}{t2^{s+1}-2^s}}\inf_{\stackrel{\bar{\delta}\in\wtree{\bN}{2^s}\colon}{(\nbar,\bar{\delta})\in \wtree{\bN}{\le 2^k}}}\liminf_{j\to\infty}\inf_{\stackrel{\bar{\eta}\in\wtree{\bN}{2^s-1}\colon}{(\nbar,j,\bar{\eta})\in \wtree{\bN}{\le 2^k}}}\frac{\dX(f(\nbar,\bar{\delta}),f(\nbar,j,\bar{\eta}))^p}{2^{sp}}.
\end{align*}

Given $\vep\in\cube{t2^{s+1}-2^s}$ and $\delta \in \cube{{2^s}-1}$, it follows from the definitions of $J$, $\phi$, and property $(\star)$ that there exists an integer $j' = j'(\vep,\delta) \geq J(\phi(\vep),\phi(\vep,1,\delta))$ such that, for every $\delta' \in \cube{{2^s}-1}$, there exists $\bar{\eta} = \bar{\eta}(\delta') \in \wtree{\bN}{2^s-1}$ such that $\phi(\vep,-1,\delta') = (\phi(\vep),j'(\vep,\delta),\bar{\eta}(\delta'))$. Since $\phi(\vep) \in \wtree{\bN}{t2^{s+1}-2^s}$ and $\phi(\vep,1,\delta)\in \wtree{\bN}{t2^{s+1}}$, we have
\begin{align*}
	&\inf_{\nbar\in\wtree{\bN}{t2^{s+1}-2^s}} \inf_{\stackrel{\bar{\delta}\in\wtree{\bN}{2^s}\colon}{(\nbar,\bar{\delta})\in \wtree{\bN}{\le 2^k}}}  \liminf_{j\to\infty} \inf_{\stackrel{\bar{\eta}\in\wtree{\bN}{2^s-1}\colon}{(\nbar, j , \bar{\eta})\in \wtree{\bN}{\le 2^k}}} \dX\big(f(\nbar,\bar{\delta}),f(\nbar, j , \bar{\eta}) \big)^p\\
    \leq & \min_{\vep\in\cube{t2^{s+1}-2^s}}\min_{\delta\in\cube{{2^s}-1}} \liminf_{j\to\infty} \inf_{\stackrel{\bar{\eta}\in\wtree{\bN}{2^s-1}\colon}{(\nbar, j , \bar{\eta})\in \wtree{\bN}{\le 2^k}}}\dX(f(\phi(\vep,1,\delta)),f(\phi(\vep),j,\bar{\eta}))^p \\
    \leq & \min_{\vep\in\cube{t2^{s+1}-2^s}}\min_{\delta\in\cube{{2^s}-1}} \inf_{\stackrel{\bar{\eta}\in\wtree{\bN}{2^s-1}\colon}{(\nbar, j , \bar{\eta})\in \wtree{\bN}{\le 2^k}}}\dX(f(\phi(\vep,1,\delta)),f(\phi(\vep),j'(\vep,\delta),\bar{\eta}))^p + \frac{\gamma}{k} \\
    \leq & \min_{\vep\in\cube{t2^{s+1}-2^s}}\min_{\delta\in\cube{{2^s}-1}} \min_{\delta'\in\cube{{2^s}-1}}\dX(f(\phi(\vep,1,\delta)),f(\phi(\vep),j'(\vep,\delta),\bar{\eta}(\delta')))^p + \frac{\gamma}{k} \\
    = & \min_{\vep\in\cube{t2^{s+1}-2^s}}\min_{\delta\in\cube{{2^s}-1}} \min_{\delta'\in\cube{{2^s}-1}}\dX(f(\phi(\vep,1,\delta)),f(\phi(\vep,-1,\delta')))^p + \frac{\gamma}{k}
	\end{align*}
Hence, after dividing by $2^{sp}$ and summing appropriately over $t$ and $s$, we have $B \leq A + \gamma$. Since $\gamma > 0$ was arbitrary, we have $A\le B$. To conclude that $\Pi_p^u(X) \leq \Pi_p^f(X)$, it remains to observe that  
\begin{align*}
    \frac{1}{2^k}\sum_{\ell=1}^{2^k}\max_{\vep\in \cube{\ell}}\dX(f(\phi(\vep_1,\dots,\vep_{\ell-1})),f(\phi(\vep_1,\dots, \vep_{\ell})))^p \\ \leq \frac{1}{2^{k}}\sum_{\ell=1}^{2^{k}}\sup_{\nbar\in \wtree{\bN}{\ell}}\dX(f(n_1,\dots,n_{\ell-1}),f(n_1,\dots,n_{\ell}))^p
\end{align*}
as $\phi$ preserves the extension relation.
\end{proof}

A further relaxation of the $p$-fork inequality is the following:

\noindent For all $w,x,y,z\in \met X$, 
\begin{equation}
\label{eq:super-relaxed-q-fork}
\frac{1}{2^p}\min\{\dX(w,x)^p,\dX(w,y)^p\}+\frac{\dX(x,y)^p}{4^{p}K^p}\le \max\{\dX(z,w)^p,\dX(z,x)^p,\dX(z,y)^p\}
\end{equation} 

\noindent By analogy with terminology surrounding the notion of infrasup-umbel convexity, we will refer to inequality \eqref{eq:super-relaxed-q-fork} as the \emph{infrasup $p$-fork inequality} with constant $K$. We also introduce the following definition.

\begin{defi}
Let $p\in(0,\infty]$. A metric space $\metXd$ is \emph{infrasup-fork $p$-convex} if there exists $C>0$ such that for all $k\ge 1$ and all $f\colon \bin{2^k}\to \metX$ 
\begin{equation}
\label{eq:forkcotypeq}
\left(\sum_{s=1}^{k-1}\min_{\vep \in \bin{2^k-2^s}}\min_{\delta, \delta'\in \cube{2^{s}-1}}\frac{\dX(f(\vep,-1,\delta),f(\vep,1,\delta'))^p}{2^{sp}}\right)^{\frac{1}{p}}\le C \lip(f).
\end{equation} 
We denote by $\Pi_p^{isf}(\metX)$ the least constant $C$ such that \eqref{eq:forkcotypeq} holds for all $k\ge 1$ and all maps $f\colon \bin{2^k}\to \metX$.
\end{defi}

In the next theorem we gather results that are local analogues of those in Section \ref{sec:umbel}.

\begin{theo}\
\label{thm:fork}
\begin{enumerate}
\item If a metric space $\metXd$ satisfies inequality \eqref{eq:relaxed-q-fork} with constant $K>0$, then $\metX$ is fork $p$-convex. Moreover, $\Pi_p^f(\metX)\le 4K$.
\item If a metric space $\metXd$ satisfies the infrasup $p$-fork inequality \eqref{eq:super-relaxed-q-fork} with constant $K>0$, then $\metX$ is infrasup-fork $p$-convex. Moreover, $\Pi_p^{isf}(\metX)\le 4K$.
\end{enumerate}
\end{theo}

\begin{proof}
The first assertion can be proven much in the same way as Theorem \ref{thm:pumbel->umbelp}, and we leave this verification to the dutiful reader.
We will prove the second assertion. The proof is rather similar to the proof of Theorem \ref{thm:pumbel->umbelp} albeit on some occasions where some slightly different justifications are needed.  It will be sufficient to show by induction on $k$ that for all maps $f\colon \bin{2^k}\to \metX$, and all $\rho\in \cube{}$ 
\begin{align*}
\min_{\delta\in\cube{2^{k}-1}}\frac{\dX(f(\emptyset),f(\rho,\delta))^p}{2^{kp}} + & \frac{1}{4^{p}K^p}\sum_{s=1}^{k-1}\min_{\vep\in\bin{2^{k}-2^s}}\min_{\delta,\delta'\in\cube{2^s-1}}\frac{\dX(f(\vep,-1,\delta),f(\vep,1,\delta'))^p}{2^{sp}}\\			
					 \le & \max_{1\le \ell \le 2^k}\max_{\vep\in\cube{\ell}} \dX(f(\vep_1,\dots, \vep_{\ell-1}),f(\vep_1,\dots, \vep_{\ell}))^p.
\end{align*}
For the base case $k=1$, the inequality reduces to
\begin{align*}
\min_{\delta\in\cube{}}\frac{\dX(f(\emptyset),f(\rho,\delta))}{2} \le   \max\Big\{\max_{\vep \in \cube{}}\dX(f(\emptyset),f(\vep)), \max_{\vep \in \cube{2}}\dX(f(\vep_1),f(\vep_1,\vep_2))\Big\},
\end{align*}
which is an immediate consequence of the triangle inequality.
We now proceed to the inductive step and fix $\iota\in \cube{}$ and $f\colon \bin{2^{k+1}}\to \metX$. Let $\mu\in \cube{2^k-1}$ such that 
\begin{equation*}
\frac{\dX(f(\troot),f(\iota,\mu))^p}{2^{kp}} = \min_{\delta\in\cube{2^{k}-1}}\frac{\dX(f(\troot),f(\iota,\delta))^p}{2^{kp}},
\end{equation*}
and for each $\rho\in\cube{}$, choose $\nu(\rho)\in\cube{2^{k}-1}$
\begin{equation*}
\frac{\dX(f(\iota,\mu),f(\iota,\mu,\rho,\nu(\rho)))^p}{2^{kp}} = \min_{\delta\in\cube{2^{k}-1}}\frac{\dX(f(\iota,\mu),f(\iota,\mu,\rho,\delta))^p}{2^{kp}}.
\end{equation*}
By the induction hypothesis applied to the restriction of $f$ to $\bin{2^{k}}$ (and with $\rho=\iota$) we get
\begin{align} \label{eq:aux1} \nonumber
\min_{\delta\in\cube{2^{k}-1}}\frac{\dX(f(\troot),f(\iota,\delta))^p}{2^{kp}} & + \frac{1}{4^{p}K^p}\sum_{s=1}^{k-1}\min_{\vep\in\bin{2^{k}-2^s}}\min_{\delta,\delta' \in \cube{2^s-1}}\frac{\dX(f(\vep,-1,\delta),f(\vep,1,\delta'))^p}{2^{sp}}\\			
					 & \le  \max_{1\le \ell \le 2^k}\max_{\vep\in \cube{\ell}}\dX(f(\vep_1,\dots, \vep_{\ell-1}),f(\vep_1,\dots, \vep_{\ell}))^p.
\end{align}
By taking the first minimum in the sum and the maximum over larger sets we get
\begin{align} \label{eq:forkcotype1} \nonumber
\min_{\delta\in\cube{2^{k}-1}}\frac{\dX(f(\troot),f(\iota,\delta))^p}{2^{kp}} & + \frac{1}{4^{p}K^p}\sum_{s=1}^{k-1}\min_{\vep\in\bin{2^{k+1}-2^s}}\min_{\delta,\delta' \in \cube{2^s-1}}\frac{\dX(f(\vep,-1,\delta),f(\vep,1,\delta'))^p}{2^{sp}}\\			
					 & \le  \max_{1\le \ell \le 2^{k+1}}\max_{\vep\in \cube{\ell}}\dX(f(\vep_1,\dots, \vep_{\ell-1}),f(\vep_1,\dots, \vep_{\ell}))^p.
\end{align}
On the other hand, the induction hypothesis applied to $g(\vep)\eqd f((\iota,\mu),\vep)$ where $\vep\in \bin{2^{k}}$ gives
\begin{align*}
\min_{\delta\in\cube{2^{k}-1}}\frac{\dX(g(\troot),g(\iota,\delta))^p}{2^{kp}} & +  \frac{1}{4^{p}K^p}\sum_{s=1}^{k-1}\min_{\vep\in\bin{2^{k}-2^s}}\min_{\delta,\delta' \in \cube{2^s-1}}\frac{\dX(g(\vep,-1,\delta),g(\vep,1,\delta'))^p}{2^{sp}}\\			
					& \le  \max_{1\le \ell \le 2^k}\max_{\vep\in \cube{\ell}}\dX(g(\vep_1,\dots, \vep_{\ell-1}),g(\vep_1,\dots, \vep_{\ell}))^p.
\end{align*}
Observe first that,
\begin{align*}
\max_{1\le \ell \le 2^k}\max_{\vep\in \cube{\ell}}\dX(g(\vep_1,\dots, \vep_{\ell-1}), & g(\vep_1,\dots, \vep_{\ell}))^p \\
 & = \max_{1\le \ell \le 2^k}\max_{\vep\in \cube{\ell}}\dX(f(\iota,\mu,\vep_1,\dots, \vep_{\ell-1}),f(\iota,\mu,\vep_1,\dots, \vep_{\ell}))^p\\
 			& \le \max_{1\le \ell \le 2^{k+1}}\max_{\vep \in \cube{\ell}}\dX(f(\vep_1,\dots, \vep_{\ell-1}), f(\vep_1,\dots, \vep_{\ell}))^p,
\end{align*}
since we are maximizing over the set of all the edges instead of a subset of it. Also, for each $s = 1, \dots, k-1$,
\begin{align*}
\min_{\vep\in\bin{2^{k}-2^s}}\min_{\delta,\delta' \in \cube{2^s-1}} & \frac{\dX(g(\vep,-1,\delta),g(\vep,1,\delta'))^p}{2^{sp}}\\
 & =  \min_{\vep\in\bin{2^{k}-2^s}} \min_{\delta,\delta' \in \cube{2^s-1}}\frac{\dX(f(\iota,\mu,\vep,-1,\delta),f(\iota,\mu,\vep,1,\delta'))^p}{2^{sp}}\\
 & \ge  \min_{\vep\in\bin{2^{k+1}-2^s}} \min_{\delta,\delta' \in \cube{2^s-1}}\frac{\dX(f(\vep,-1,\delta),f(\vep,1,\delta'))^p}{2^{sp}},
\end{align*}
since $(\iota,\mu,\vep)\in \bin{2^{k+1}-2^s}$ for all $\vep \in \bin{2^{k}-2^s}$.

Therefore, it follows from the two relaxations above that 
\begin{align}\label{eq:forkcotype2}
\nonumber \min_{\delta \in \cube{2^{k}-1}} \frac{\dX(f(\iota,\mu),f(\iota,\mu,\rho,\delta))^p}{2^{kp}} & + \frac{1}{4^{p}K^p}\sum_{s=1}^{k-1}\min_{\vep\in\bin{2^{k+1}-2^s}} \min_{\delta,\delta' \in \cube{2^s-1}}\frac{\dX(f(\vep,-1,\delta),f(\vep,1,\delta'))^p}{2^{sp}}\\			
					& \le \max_{1 \le \ell \le 2^{k+1}} \max_{\vep \in \cube{\ell}}\dX(f(\vep_1,\dots, \vep_{\ell-1}), f(\vep_1,\dots, \vep_{\ell}))^p.
\end{align}
Since the sum and right hand side in \eqref{eq:forkcotype2} do not depend on $\rho$, it follows from \eqref{eq:forkcotype1} and \eqref{eq:forkcotype2} that 
\begin{align}\label{eq:forkcotype3}
\nonumber \frac{1}{2^{kp}}\Big(\max\Big\{\min_{\delta \in \cube{2^{k}-1}}\dX(f(\troot),f(\iota,\delta))^p, \max_{\rho\in \cube{}} \min_{\delta \in \cube{2^{k}-1}}\dX(f(\iota,\mu),f(\iota,\mu, \rho,\delta))^p\Big\}\Big)\\
 + \frac{1}{4^{p}K^p}\sum_{s=1}^{k-1}\min_{\vep \in \bin{2^{k+1}-2^s}} \min_{\delta,\delta' \in \cube{2^s-1}}\frac{\dX(f(\vep,-1,\delta),f(\vep,1,\delta'))^p}{2^{sp}}\\			
\nonumber					\le  \max_{1\le \ell \le 2^{k+1}} \max_{\vep \in \cube{\ell}}\dX(f(\vep_1,\dots, \vep_{\ell-1}), f(\vep_1,\dots, \vep_{\ell}))^p.
\end{align}
If we let $w\eqd f(\troot)$, $z\eqd f(\iota,\mu)$, and $x_\rho\eqd f(\iota,\mu,\rho, \nu(\rho))$ (for $\rho\in \cube{}$) it follows from how $\mu$ and $\nu(\rho)$ were chosen, that
\begin{align}\label{eq:forkcotype4}
\frac{1}{2^{kp}} & \Big(\max\Big\{\dX(w,z)^p,\max_{\rho \in \cube{}}\dX(z,x_\rho)^p\Big\}\Big) = \\
\nonumber   & \frac{1}{2^{kp}}\Big(\max\Big\{\min_{\delta \in \cube{2^{k}-1}}\dX(f(\troot),f(\iota,\delta))^p, \max_{\rho\in \cube{}} \min_{\delta \in \cube{2^{k}-1}}\dX(f(\iota,\mu),f(\iota,\mu, \rho,\delta))^p\Big\}\Big).
\end{align}
Inequality \eqref{eq:relaxed-q-fork} combined with \eqref{eq:forkcotype3} and \eqref{eq:forkcotype4} gives 
\begin{align}\label{eq:forkcotype5}
\nonumber \frac{1}{2^{(k+1)p}}\min\{\dX(w,x_{-1})^p,\dX(w,x_{1})^p\}+\frac{1}{4^{p}K^p}\frac{1}{2^{kp}} \dX(x_{-1},x_{1})^p\\
 + \frac{1}{4^{p}K^p}\sum_{s=1}^{k-1}\min_{\vep \in \bin{2^{k+1}-2^s}} \min_{\delta,\delta' \in \cube{2^s-1}}\frac{\dX(f(\vep,-1,\delta),f(\vep,1,\delta'))^p}{2^{sp}}\\		
\nonumber 					\le  \max_{1\le \ell \le 2^{k+1}}\max_{\vep \in \cube{\ell}}\dX(f(\vep_1,\dots, \vep_{\ell-1}), f(\vep_1,\dots, \vep_{\ell}))^p.
\end{align}
Now observe that
\begin{align*}
\min\{\dX(w,x_{-1})^p,\dX(w,x_{1})^p\} & =  \min\{\dX( f(\troot),f(\iota,\mu, -1, \nu(-1)))^p, \dX( f(\troot),f(\iota,\mu, 1, \nu(1)))^p\} \\
				& \ge \min_{\delta \in \cube{2^{k+1}-1}}\dX( f(\troot),f(\iota,\delta))^p,
\end{align*}
and
\begin{align*}
\dX(x_{-1},x_1)^p  & = \dX(f(\iota,\mu,-1, \nu(-1)),f(\iota,\mu, 1, \nu(1)))^p \\
								& \ge   \min_{\delta,\delta' \in \cube{2^k-1}} \dX(f(\iota,\mu,-1, \delta),f(\iota,\mu, 1, \delta'))^p\\
							         & \ge   \min_{\vep \in \bin{2^{k}}} \min_{\delta,\delta' \in \cube{2^k-1}} \dX(f(\vep,-1, \delta),f(\vep, 1, \delta'))^p
\end{align*}
Plugging in the two relaxed inequalities above in \eqref{eq:forkcotype5} we obtain
\begin{align*}
\min_{\delta \in \cube{2^{k+1}-1}}\frac{\dX( f(\troot),f(\iota,\delta))^p}{2^{(k+1)p}}+\frac{1}{4^{p}K^p}\min_{\vep \in \bin{2^{k}}} \min_{\delta,\delta' \in \cube{2^k-1}} \frac{\dX(f(\vep,-1, \delta),f(\vep, 1, \delta'))^p}{2^{kp}}\\
 + \frac{1}{4^{p}K^p}\sum_{s=1}^{k-1}\min_{\vep \in \bin{2^{k+1}-2^{s}}}\min_{\delta,\delta' \in \cube{2^s-1}}\frac{\dX(f(\vep,-1,\delta),f(\vep,1,\delta'))^p}{2^{sp}}\\			
					\le  \max_{1\le \ell \le 2^{k+1}}\max_{\vep \in \cube{\ell}}\dX(f(\vep_1,\dots, \vep_{\ell-1}), f(\vep_1,\dots, \vep_{\ell}))^p,
\end{align*}
and hence 
\begin{align*}
\min_{\delta \in \cube{2^{k+1}-1}}\frac{\dX( f(\troot),f(\iota,\delta))^p}{2^{(k+1)p}} & + \frac{1}{4^{p}K^p}\sum_{s=1}^{k}\min_{\vep \in \bin{2^{k+1}-2^{s}}}\min_{\delta,\delta' \in \cube{2^s}}\frac{\dX(f(\vep,-1,\delta),f(\vep,1,\delta'))^p}{2^{sp}}\\			
					& \le  \max_{1\le \ell \le 2^{k+1}}\max_{\vep \in \cube{\ell}}\dX(f(\vep_1,\dots, \vep_{\ell-1}), f(\vep_1,\dots, \vep_{\ell}))^p,
\end{align*}
which completes the induction step.
\end{proof}

Infrasup-fork $p$-convexity is an obvious relaxation of fork $p$-convexity. It is less obvious that infrasup-fork $p$-convexity is also a relaxation of Markov $p$-convexity\footnote{It is unclear if fork $p$-convexity is implied by Markov $p$-convexity, see Problem~\ref{pb:markov->fork}.}, and we need a preliminary lemma that allows us to pass from the stochastic definition of Markov convexity to a deterministic inequality.

\begin{lemm} \label{lem:Markovfork1}
Let $\metXd$ be a metric space, $p>0$, $k \geq 1$, $\{W_t\}_{t \in \bZ}$ the simple directed random walk on $\bin{2^k}$ starting at the root, and $f\colon \bin{2^k} \to \metX$ a map. Then 
	\begin{enumerate}[(i)]
		\item for all  $0 \leq s \leq k$ and $2^s \leq t \leq 2^k$,
			\begin{align*} 
				\Ex & [\dX(f(W_t),f(\tilde{W}_t(t-2^s)))^p] \\
				& = \frac{1}{2^{t-2^s}} \sum_{\vep \in \cube{t-2^s}} \sum_{\ell=1}^{2^s} \frac{1}{2^\ell} \frac{1}{2^{\ell-1}} & \sum_{\vep' \in \cube{\ell-1}}\frac{2}{4^{2^s-\ell}} \sum_{\delta,\delta' \in \cube{2^s-\ell}} \dX(f(\vep,\vep',-1,\delta),f(\vep,\vep',1,\delta'))^p,
			\end{align*}
		\item and \begin{align} \label{eq:Markovfork2}
				\sum_{s=1}^{k-1} \frac{1}{2^k}\sum_{t=2^s}^{2^k} & \dfrac{\Ex[\dX(f(W_t),f(\tilde{W}_t(t-2^s)))^p]}{2^{sp}} \nonumber \\
				& \geq \frac{1}{2}\sum_{s=1}^{k-1}\min_{\vep \in \cube{\leq 2^k-2^s}} \min_{\delta,\delta' \in \cube{2^s-1}} \dfrac{\dX(f(\vep,-1,\delta),f(\vep,1,\delta'))^p}{2^{sp}}.
			        \end{align}
	\end{enumerate}
\end{lemm}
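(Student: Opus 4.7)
The plan for part (i) is a direct probability bookkeeping that proceeds by first conditioning on $W_{t-2^s}$, then decomposing the remaining $2^s$-step evolutions of $W$ and $\tilde W_{\cdot}(t-2^s)$ according to the first index $\ell$ at which they disagree. Fixing $\vep\in\cube{t-2^s}$, the event $W_{t-2^s}=\vep$ occurs with probability $1/2^{t-2^s}$, after which both walks sit at $\vep$ and execute independent Bernoulli$(1/2)$ trajectories of length $2^s$. Conditioning further on the common prefix $\vep'\in\cube{\ell-1}$ that both walks take during their first $\ell-1$ agreeing steps, on the first disagreement occurring at step $\ell$, and on the independent tails $\delta,\delta'\in\cube{2^s-\ell}$ each walk takes after that split, the joint probability factors as a product of independent Bernoulli$(1/2)$ contributions. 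The two mirror-image choices for the direction of the first disagreement (either $W$ moves to $-1$ and $\tilde W$ to $+1$, or vice versa) contribute equally after relabelling $\delta\leftrightarrow\delta'$ and invoking the symmetry of $\dX$, producing the factor of $2$ in the numerator and leaving the single distance term displayed in the formula.

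For part (ii), the plan is to extract and lower bound only the $\ell=1$ slice of the identity in (i); this is permissible since every summand in (i) is nonnegative. When $\ell=1$ the common prefix $\vep'$ is empty and the two tails range over $\cube{2^s-1}$, so each surviving distance term has exactly the shape $\dX(f(\vep,-1,\delta),f(\vep,1,\delta'))^q$ with $\vep\in\cube{t-2^s}\subseteq\cube{\le 2^k-2^s}$ that appears in the fork cotype expression. Writing
\[
\mu_s\eqd\min_{\vep\in\cube{\le 2^k-2^s}}\min_{\delta,\delta'\in\cube{2^s-1}}\dX(f(\vep,-1,\delta),f(\vep,1,\delta'))^q,
\]
substituting the explicit coefficients from (i), and checking that the $\ell=1$ weights sum to $1$ regardless of $t,s,k$, one gets $\Ex[\dX(f(W_t),f(\tilde{W}_t(t-2^s)))^q]\ge\mu_s$. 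Since the index set $\{2^s,\ldots,2^k\}$ contains at least $2^{k-1}+1$ values whenever $s\le k-1$, the outer average $\tfrac{1}{2^k}\sum_{t=2^s}^{2^k}$ loses at most $\tfrac12$, and after dividing by $2^{sq}$ and summing over $s$ one obtains \eqref{eq:Markovfork2}.

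The main obstacle will be the bookkeeping in part (i): one must track the indices $\vep,\vep',\delta,\delta',\ell,t,s$ simultaneously, correctly count the independent Bernoulli$(1/2)$ factors contributing to each event, and then fold the two mirror-image orientations of the first disagreement into a single distance term without losing or double-counting a factor of $2$. Once (i) is secured, part (ii) reduces to a weighted counting argument whose key observation is that the $\ell=1$ coefficients in (i) sum to a universal constant and that the length of the $t$-range is comparable to $2^k$.
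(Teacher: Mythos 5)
Your proposal is correct and takes essentially the same approach as the paper: for (i) you condition on $W_{t-2^s}$, decompose the coupled walks by the first index $\ell$ at which they branch, track the product of independent Bernoulli factors, and fold the two orientations into one distance term by symmetry; for (ii) you drop all $\ell>1$ terms, observe that the $\ell=1$ coefficients form a convex combination (hence dominate the minimum), and note that the $t$-range $\{2^s,\dots,2^k\}$ has cardinality at least $2^{k-1}+1$ when $s\le k-1$, giving the factor $1/2$. The only cosmetic difference is that you establish the per-$t$ lower bound $\Ex[\cdot]\ge\mu_s$ first and average afterwards, whereas the paper first replaces $1/2^k$ by $1/(2^k-2^s)$ and then applies (i) — the same counting, just in a different order.
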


\begin{proof}
The proof of $(i)$ goes by performing consecutive ad-hoc conditionings. It is clear that $W_{t-2^s} = \tilde{W}_{t-2^s}(t-2^s)$ almost surely and both are uniformly distributed over the set $\cube{t-2^s}$, which has cardinality $2^{t-2^s}$. Therefore, by the law of total expectations,
	\begin{equation*}
		\Ex[\dX(f(W_t),f(\tilde{W}_t(t-2^s)))^p] = \frac{1}{2^{t-2^s}} \sum_{\vep \in \cube{t-2^s}} \Ex[\dX(f(W_t),f(\tilde{W}_t(t-2^s)))^p \mid W_{t-2^s} = \vep]
	\end{equation*}
Fix $\vep$ in the sequel.

Next, consider the event, denoted $\cE_\ell$, that $W_{t-2^s}$ and $\tilde{W}_{t-2^s}(t-2^s)$ branch from each other immediately before making the $\ell$th step after $\vep$. Formally, for every $1 \leq \ell \leq 2^s$ and $\vep' \in \cube{\ell-1}$,
	\begin{equation*}
		\cE_\ell \eqd \bigcup_{\vep' \in \cube{\ell-1}} A_\ell(\vep')
	\end{equation*}
where
	\begin{equation*}
		A_\ell(\vep') \eqd \bigcup_{u \in\{-1,1\}} \left\{\substack{W_{t-2^s+\ell-1} = \tilde{W}_{t-2^s+\ell-1}(t-2^s) = (\vep,\vep'), \\ W_{t-2^s+\ell}  = (\vep,\vep', u), \\ \tilde{W}_{t-2^s+\ell}(t-2^s) = (\vep,\vep', -u)}\right\}.
	\end{equation*}
The events $\cE_\ell$, $1\le \ell \le 2^s$, are clearly disjoint, and a simple computation shows that $\cE_\ell$ occurs with probability $2^{-\ell}$. Consequently,
	\begin{align*}
		\Ex[\dX(f(W_t), & f(\tilde{W}_t(t-2^s)))^p  \mid  W_{t-2^s} = \vep] \\
		& = \sum_{\ell=1}^{2^s} \frac{1}{2^{\ell}} \Ex[\dX(f(W_t), f(\tilde{W}_t(t-2^s)))^p  \mid \{W_{t-2^s} = \vep\}, \cE_\ell].
	\end{align*}
For each fixed $\ell$, the events $\{A_\ell(\vep')\}_{\vep' \in \cube{\ell-1}}$ are obviously disjoint and, after conditioning on $\cE_\ell$, each occur with probability $2^{1-\ell}$. Thus,
	\begin{align*}
		\Ex[\dX(f(W_t), & f(\tilde{W}_t(t-2^s)))^p  \mid \{W_{t-2^s} = \vep\}, \cE_\ell] \\
		& =  \frac{1}{2^{\ell-1}}\sum_{\vep' \in \cube{\ell-1}} \Ex[\dX(f(W_t), f(\tilde{W}_t(t-2^s)))^p  \mid \{W_{t-2^s} = \vep\}, \cE_\ell, A_\ell(\vep')].
	\end{align*}
Finally, recalling the definitions of the events we have conditioned on, the inequality below clearly holds
\begin{align*}
\Ex[\dX(f(W_t),f(\tilde{W}_t(t-2^s)))^p & \mid \{W_{t-2^s} = \vep\}, \cE_\ell, A_\ell(\vep')] \\
& = \sum_{u \in \{-1,1\}}\frac{1}{4^{2^s-\ell}} \sum_{\delta,\delta' \in \cube{2^s-\ell}} \dX(f(\vep,\vep',u,\delta),f(\vep,\vep',-u,\delta'))^p \\
& = \frac{2}{4^{2^s-\ell}} \sum_{\delta,\delta' \in \cube{2^s-\ell}} \dX(f(\vep,\vep',-1,\delta),f(\vep,\vep',1,\delta'))^p.
\end{align*}
Walking back through the chain of equalities we have the desired equality.

We now use $(i)$ to show $(ii)$.  Observe first that the inequality
	\begin{align*}\label{lem:Markovfork2}
\sum_{s=1}^{k-1}\frac{1}{2^k-2^s+1}\sum_{t=0}^{2^k-2^s}\frac{1}{2^t} \sum_{\vep \in \cube{t}} \frac{1}{4^{2^s-1}}&\sum_{\delta,\delta' \in \cube{2^s-1}} \dfrac{\dX(f(\vep,-1,\delta),f(\vep,1,\delta'))^p}{2^{sp}} \\
		& \geq \sum_{s=1}^{k-1}\min_{\vep \in \cube{\leq 2^k-2^s}} \min_{\delta,\delta' \in \cube{2^s-1}} 						\dfrac{\dX(f(\vep,-1,\delta),f(\vep,1,\delta'))^p}{2^{sp}}
	\end{align*}
holds trivially, since the top expression involves convex combinations over the sets $\cube{\le 2^k-2^s}$ and $\cube{2^s-1} \times \cube{2^s-1}$, and the bottom expression involves minima over these sets. To prove inequality \eqref{eq:Markovfork2}, observe that $\frac{1}{2^{k-1}} \geq \frac{1}{2^k-2^s}$ when $s \leq k-1$, and hence
	\begin{align*}
		2\sum_{s=1}^{k-1}\frac{1}{2^k} & \sum_{t=2^s}^{2^k}  \dfrac{\Ex[\dX(f(W_t),f(\tilde{W}_t(t-2^s)))^p]}{2^{sp}} \geq  \sum_{s=1}^{k-1}\frac{1}{2^k-2^s}\sum_{t=2^s}^{2^k}\dfrac{\Ex[\dX(f(W_t),f(\tilde{W}_t(t-2^s)))^p]}{2^{sp}} \\
		& \stackrel{(i)}{\geq} \sum_{s=1}^{k-1}\frac{1}{2^k-2^s}\sum_{t=2^s}^{2^k}\frac{1}{2^{t-2^s}} \sum_{\vep \in \cube{t-2^s}} \frac{1}{2}\frac{2}{4^{2^s-1}} \sum_{\delta,\delta' \in \cube{2^s-1}} \dfrac{\dX(f(\vep,-1,\delta),f(\vep,1,\delta'))^p}{2^{sp}} \\
		& = \sum_{s=1}^{k-1}\frac{1}{2^k-2^s}\sum_{t=0}^{2^k-2^s}\frac{1}{2^t} \sum_{\vep \in \cube{t}} \frac{1}{4^{2^s-1}} \sum_{\delta,\delta' \in \cube{2^s-1}} \dfrac{\dX(f(\vep,-1,\delta),f(\vep,1,\delta'))^p}{2^{sp}},
	\end{align*}
where in the application of $(i)$, we discarded all the terms with $\ell>1$. 
\end{proof}

\begin{prop}\label{prop:Markov-fork}
Every Markov $p$-convex metric space $\metXd$ is infrasup-fork $p$-convex. Moreover, $\Pi^{isf}_p(\metX) \le 2^{1/p}\Pi_p^{M}(\met X)$.
\end{prop}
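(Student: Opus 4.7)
The plan is to instantiate the Markov $q$-convexity inequality on the simple directed random walk on $\bin{2^k}$ with the map $f$, and then use Lemma~\ref{lem:Markovfork1}(ii) to convert the left-hand side into the deterministic minimum sum appearing in the definition of fork cotype $q$, while the right-hand side is trivially controlled by the Lipschitz constant of $f$.

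More concretely, fix $k\ge 1$ and $f\colon \bin{2^k}\to \metX$ with $\lip(f)<\infty$. Let $\{W_t\}_{t\in\bZ}$ be the simple directed random walk on $\bin{2^k}$ (extended by the convention that it is absorbed at the root for $t\le 0$ and at the leaves for $t\ge 2^k$, so that $W_t=W_{t-1}$ almost surely outside of $t\in\{1,\dots,2^k\}$). The evolution inside $\{0,\dots,2^k\}$ is the natural one: $W_t$ is uniformly distributed on $\cube{t}$ and each edge step has length one in the tree metric. First, I would upper bound the right-hand side of the Markov $q$-convexity inequality \eqref{eq:p-Markov}: since for every $t\in\{1,\dots,2^k\}$ the points $W_t$ and $W_{t-1}$ are adjacent in $\bin{2^k}$, we have $\Ex[\dX(f(W_t),f(W_{t-1}))^q]\le \lip(f)^q$, and the other terms vanish, giving
\[\sum_{t\in\bZ}\Ex[\dX(f(W_t),f(W_{t-1}))^q]\le 2^k\,\lip(f)^q.\]

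Next, I would lower bound the left-hand side of \eqref{eq:p-Markov}. Retaining only the indices $1\le s\le k-1$ and $2^s\le t\le 2^k$, and applying Lemma~\ref{lem:Markovfork1}(ii), I obtain
\begin{align*}
\sum_{s=0}^\infty\sum_{t\in\bZ}\frac{\Ex[\dX(f(W_t),f(\tilde W_t(t-2^s)))^q]}{2^{sq}}
&\ge 2^k\sum_{s=1}^{k-1}\frac{1}{2^k}\sum_{t=2^s}^{2^k}\frac{\Ex[\dX(f(W_t),f(\tilde W_t(t-2^s)))^q]}{2^{sq}}\\
&\ge \frac{2^k}{2}\sum_{s=1}^{k-1}\min_{\vep\in\cube{\le 2^k-2^s}}\min_{\delta,\delta'\in\cube{2^s-1}}\frac{\dX(f(\vep,-1,\delta),f(\vep,1,\delta'))^q}{2^{sq}}.
\end{align*}

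Combining the two bounds with the Markov $q$-convexity inequality yields
\[\frac{2^k}{2}\sum_{s=1}^{k-1}\min_{\vep\in\cube{\le 2^k-2^s}}\min_{\delta,\delta'\in\cube{2^s-1}}\frac{\dX(f(\vep,-1,\delta),f(\vep,1,\delta'))^q}{2^{sq}}\le \Pi_q^M(\metX)^q\cdot 2^k\,\lip(f)^q,\]
so after canceling $2^k$ and extracting a $q$-th root one gets $C_q^f(\metX)\le 2^{1/q}\Pi_q^M(\metX)$, which is the claimed inequality. There is no substantial obstacle here since Lemma~\ref{lem:Markovfork1} already does all the combinatorial bookkeeping that converts the stochastic Markov-convexity expression into a deterministic minimum over the binary tree; the proof is just an application of that lemma together with the trivial upper bound on the random-walk energy.
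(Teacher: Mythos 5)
Your proof is correct and follows essentially the same route as the paper: apply the Markov $q$-convexity inequality to the simple directed random walk on $\bin{2^k}$, use Lemma~\ref{lem:Markovfork1}(ii) to convert the stochastic left-hand side into the deterministic minimum sum, and bound the right-hand side by $2^k\lip(f)^q$. The only difference is organizational—you separate the upper and lower bounds and then combine, while the paper writes a single chain of inequalities starting from the fork cotype expression—but the content and the resulting constant $2^{1/q}\Pi_q^M(\metX)$ are identical.
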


\begin{proof}
Let $\metXd$ be a Markov $p$-convex metric space and $k,\{W_t\}_{t \in \bZ},f$ as in the statement of Lemma \ref{lem:Markovfork1}. Then
	\begin{align*}
		\sum_{s=1}^{k-1} & \min_{\vep \in \cube{\leq 2^k-2^s}} \min_{\delta,\delta' \in \cube{2^s-1}} \dfrac{\dX(f(\vep,-1,\delta),f(\vep,1,\delta'))^p}{2^{sp}} \\
		& \overset{\eqref{eq:Markovfork2}}{\leq}  2\sum_{s=1}^{k-1}\frac{1}{2^k}\sum_{t=2^s}^{2^k}\dfrac{\Ex[\dX(f(W_t),f(\tilde{W}_t(t-2^s)))^p]}{2^{sp}} \\
		& \leq  2\Pi^M_p(\metX)^p\frac{1}{2^k}\sum_{t=1}^{\infty}\Ex[\dX(f(W_t),f(W_{t-1}))^p] \\
		& \leq  2\Pi^M_p(\metX)^p\lip(f)^p\frac{1}{2^k}\sum_{t=1}^{2^k}\Ex[\dX(W_t,W_{t-1})^p] \\
		& =  2\Pi^M_p(\metX)^p\lip(f)^p.
	\end{align*}
\end{proof}

\begin{rema}\label{rem:Tessera}
We can show that Tesserra's $p$-inequality \eqref{eq:q-tess} is implied by the Markov $p$-convexity inequality \eqref{eq:p-Markov} using arguments along the lines of those in the proofs of Lemma \ref{lem:Markovfork1} and Proposition \ref{prop:Markov-fork}. 
\end{rema}

We record local analogues of the asymptotic results found in Section \ref{sec:distortion}. These local analogues are extensions of results that are known to be valid for spaces that satisfy the Markov $p$-convexity inequality or Tessera's $p$-inequality. The proofs of these local results are nearly identical to their asymptotic counterparts and can be safely omitted.
The first result deals with distortion lower bounds.

\begin{prop}
\label{prop:bin-tree-dist}
For all $p\in(0,\infty)$, $\Pi^{usf}_p(\bin{2^k})\ge 2(k-1)^{1/p} $ and hence 
\[\cdist{\metY}(\bin{k})=\Omega((\log k)^{\frac{1}{p}}),\]
for every metric space $\metYd$ that is infrasup-fork $p$-convex.
\end{prop}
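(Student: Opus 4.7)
The plan is to adapt the proof of Proposition \ref{prop:tree-dist} line-by-line to the binary-tree setting; this is precisely the passage the authors refer to when they say the local analogs follow "nearly identical" arguments to their asymptotic counterparts. I would take $f$ to be the identity map on $(\bin{2^k},\sd_\tree)$, for which plainly $\lip(f)\le 1$, and then directly evaluate the left-hand side of the fork cotype $q$ inequality \eqref{eq:forkcotypeq}.

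The only quantitative input needed is a simple combinatorial observation about the geometry of $\bin{2^k}$: for any $\vep\in\bin{2^k-2^s}$ and any $\delta,\delta'\in\cube{2^s-1}$, the vertices $(\vep,-1,\delta)$ and $(\vep,1,\delta')$ share $\vep$ as their unique deepest common ancestor (at height $2^k-2^s$) and both sit at depth $2^k$, so their shortest-path distance is exactly $2\cdot 2^s=2^{s+1}$. Consequently each summand contributes at least $2^{(s+1)q}/2^{sq}=2^q$, and summing over $s=1,\ldots,k-1$ yields
\[\sum_{s=1}^{k-1}\min_{\vep\in\bin{2^k-2^s}}\min_{\delta,\delta'\in\cube{2^s-1}}\frac{\sd_\tree((\vep,-1,\delta),(\vep,1,\delta'))^q}{2^{sq}}\ge (k-1)\,2^q,\]
whose $q$-th root is exactly $2(k-1)^{1/q}$. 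This gives the first assertion.

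For the distortion lower bound, I would invoke the standard bi-Lipschitz monotonicity of fork cotype, which is immediate from the definition exactly as it is for umbel cotype in \eqref{eq:umbelcotypep}: both sides of \eqref{eq:forkcotypeq} are homogeneous of degree $q$ in the Lipschitz constant of the test map, so composing with a bi-Lipschitz embedding yields $C_q^f(\metX)\le \cdist{\metY}(\metX)\cdot C_q^f(\metY)$. Combining this with the lower bound just obtained gives $\cdist{\metY}(\bin{2^k})\ge 2(k-1)^{1/q}/C_q^f(\metY)$. To pass from powers of two to arbitrary $k$, I would pick $m$ with $2^m\le k<2^{m+1}$, so that $m=\Theta(\log k)$, and use the trivial fact that $\bin{2^m}$ embeds isometrically as a subtree of $\bin{k}$.

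I do not anticipate any genuine difficulty: the argument is essentially bookkeeping, and the only point worth being a little careful about is verifying the bi-Lipschitz monotonicity of $C_q^f(\cdot)$, which is the same one-line scaling argument already used implicitly for umbel cotype.
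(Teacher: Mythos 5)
Your proof is correct and is exactly the line-by-line adaptation of Proposition \ref{prop:tree-dist} that the paper invokes when it states that "the proofs of these local results are nearly identical to their asymptotic counterparts and can be safely omitted." The only tiny imprecision is the parenthetical "both sit at depth $2^k$": since $\vep$ ranges over all of $\bin{2^k-2^s}$ rather than only over $\cube{2^k-2^s}$, the two leaves need not be at maximal depth — but their tree distance is $2^{s+1}$ in every case because $\vep$ is still the deepest common ancestor and both extensions have length $|\vep|+2^s$, so the conclusion is unaffected.
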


The second result provides compression lower bounds.

\begin{theo} 
\label{thm:bintree-compression}
Let $p\in(0,\infty)$. Assume that there are non-decreasing maps $\rho,\omega\colon [0,\infty)\to [0,\infty)$ and for all $k\ge 1$ a map $f_k\colon \bin{2^k}\to \metY$ such that for all $x,y\in \bin{2^k}$ 
\begin{equation*}
\rho(\sd_\tree(x,y))\le \dY(f_k(x),f_k(y))\le \omega(\sd_\tree(x,y)).
\end{equation*}
Then,
\begin{equation*}
\int_{1}^\infty \Big(\frac{\rho(t)}{t}\Big)^p\frac{dt}{t} \le \frac{2^p-1}{p}\Pi^{isf}_p(\metY)^p \omega(1)^p.
\end{equation*}
In particular, the compression rate of any equi-coarse embedding of $\{\bin{k}\}_{k\ge 1}$ into a metric space that is infrasup-fork $p$-convex satisfies  
\begin{equation}\label{eq:bin-comp-bound}
\int_{1}^\infty \Big(\frac{\rho(t)}{t}\Big)^p\frac{dt}{t}<\infty.
\end{equation}
\end{theo}

Equipped with Proposition \ref{prop:bin-tree-dist}, we can show that two results from \cite{LNP09} about Markov convexity actually holds for the much weaker notion of infrasup-fork convexity. The proofs are the same as in \cite{LNP09} where the full power of Markov convexity was not needed (these partial results were greatly strengthened in \cite{MendelNaor13} where the proof of Theorem \ref{thm:MN-LNP} was completed). We recall the short arguments for the convenience of the reader. 

\begin{coro}\ 
\begin{enumerate}
\item Let $\banX$ be a Banach space. If $\banX$ is infrasup-fork $p$-convex for some $p \ge 2$, then $\banX$ is super-reflexive and has Rademacher cotype $p+\vep$ for every $\vep>0$. 
\item If a Banach lattice $\banX$ that is infrasup-fork $p$-convex for some $p\ge 2$, then for every $\vep>0$, $\banX$ admits an equivalent norm that $(p+\vep)$-uniformly convex. 
\end{enumerate}
\end{coro}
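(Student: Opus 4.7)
The plan is to derive both parts from Proposition~\ref{prop:bin-tree-dist} by adapting the reasoning of \cite{LNP09} essentially verbatim, as is explicitly advertised in the remark preceding the corollary. All the substantive geometric input is the binary-tree distortion lower bound, which is exactly what fork cotype provides.

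For part~(1), I would apply Proposition~\ref{prop:bin-tree-dist} to the fork cotype $q$ hypothesis, obtaining $\cdist{\banX}(\bin{k})\ge c(\log k)^{1/q}$; in particular $\sup_k \cdist{\banX}(\bin{k})=\infty$, and Bourgain's metric characterization of super-reflexivity gives at once that $\banX$ is super-reflexive. For the Rademacher cotype $q+\vep$ conclusion, Pisier's quantitative renorming theorem yields, for some $p\ge 2$, an equivalent $p$-uniformly convex norm on $\banX$, which classically entails Rademacher cotype $p$, so it suffices to bound the infimum of admissible exponents $p$ by $q+\vep$. The argument of \cite{LNP09} for this exponent matching confronts the lower bound $(\log k)^{1/q}$ with a complementary upper bound on $\cdist{\banX}(\bin{k})$ available under $p$-uniform convexity; since that comparison draws only on the tree-distortion information already packaged into fork cotype, it transfers unchanged to our setting.

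For part~(2), I would exploit the tight interplay between Rademacher cotype and lattice $q$-concavity. Part~(1) yields Rademacher cotype $q+\vep/2$ for $\banX$; by Maurey's theorem, any Banach lattice with finite cotype $r$ is $r'$-concave for every $r'>r$, and hence $\banX$ is $(q+\vep)$-concave. Combined with the super-reflexivity from part~(1), Figiel's renorming theorem then delivers an equivalent lattice norm on $\banX$ that is $(q+\vep)$-uniformly convex. The only genuinely delicate point in the whole proof is the exponent-matching step in part~(1), which relies on the complementary distortion upper bound under $p$-uniform convexity furnished by \cite{LNP09}; everything else is mechanical and rests on classical renorming theorems.
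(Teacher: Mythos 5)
Your proof of super-reflexivity in part~(1) and your handling of part~(2) are fine: for the latter you pass through Maurey's $q$-concavity theorem for lattices on the way to Figiel's renorming, whereas the paper cites Figiel directly for cotype, but this is a cosmetic difference. The problem is the cotype step in part~(1).

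You write that, after invoking Pisier's renorming to get $p$-uniform convexity, one ``confronts the lower bound $(\log k)^{1/q}$ with a complementary upper bound on $\cdist{\banX}(\bin{k})$ available under $p$-uniform convexity.'' No such upper bound exists: $p$-uniform convexity of $\banX$ yields only a \emph{lower} bound on $\cdist{\banX}(\bin{k})$ of order $(\log k)^{1/p}$. There is no a priori reason that binary trees embed well into a $p$-uniformly convex space (e.g.\ one could embed a tree into a $p$-uniformly convex space poorly simply by passing to a bad subspace), and the detour through Pisier's renorming is also the wrong quantity to control: the infimal exponent of uniform convexity of an equivalent norm is in general larger than the infimal Rademacher cotype, so bounding the former by $q+\vep$ would be both unnecessary and harder than what is being asked.

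The argument that actually works, and that the paper uses, bypasses uniform convexity entirely. Suppose toward a contradiction that $\banX$ does not have cotype $q+\vep$. Then the infimal cotype of $\banX$ exceeds $q$, so by the Maurey--Pisier theorem $\ell_{q'}^n$ embeds uniformly into $\banX$ for some $q'>q$. Bourgain's tree embedding then produces embeddings of $\bin{k}$ into $\ell_{q'}^n$ (hence into $\banX$) with distortion $O((\log k)^{1/q'})$. Since $q'>q$, this is $o((\log k)^{1/q})$, contradicting the lower bound $\cdist{\banX}(\bin{k}) = \Omega((\log k)^{1/q})$ furnished by Proposition~\ref{prop:bin-tree-dist} from fork cotype $q$. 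The upper bound thus comes from the \emph{presence of $\ell_{q'}^n$-subspaces}, not from uniform convexity of the ambient norm; you need to replace your ``exponent-matching under $p$-uniform convexity'' step by this Maurey--Pisier plus Bourgain-embedding contradiction.
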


\begin{proof}
For the first assertion, Proposition \ref{prop:bin-tree-dist} together with Bourgain's super-reflexivity characterization implies that $
\banX$ is super-reflexive. The second part follows from the fact that by Maurey-Pisier theorem \cite{MaureyPisier76} $\banX$ contains the $\ell_{q_\banX}^n$'s where $q_\banX=\inf\{q\colon \banX \textrm{ has cotype } q\}$. By Bourgain's tree embedding we have that $\cdist{\banX}(B_k)=O((\log k)^{1/q_\banX})$ and it follows from Proposition \ref{prop:bin-tree-dist} that $q_\banX\le p$.
The second assertion follows from the first and a renorming result of Figiel \cite{Figiel76} which says that every super-reflexive Banach lattice with cotype $q$ admits an equivalent norm that is $(q+\vep)$-uniformly convex for every $\vep>0$.
\end{proof}

A very interesting dichotomy is contained in \cite{LNP09} where it was proved that for an infinite metric tree $\tree$, $\sup_{k\in \bN}\cdist{\tree}(\bin{k})<\infty$ if and only if $\cdist{\ell_2}(\tree)=\infty$. The following corollary can be found in \cite{LNP09} and the additional assertion $(4')$ follows from the observations of this section.

\begin{coro}
\label{cor:dicho}
Let $\tree$ be an infinite metric tree. The following assertions are equivalent.
\begin{enumerate}
\item $\sup_{k\in \bN}\cdist{\tree}(\bin{k})=1$.
\item $\sup_{k\in \bN}\cdist{\tree}(\bin{k})<\infty$. 
\item $\tree$ is not Markov $p$-convex for any $p\in(1,\infty)$.
\item $\tree$ is not Markov $p$-convex for some $p\in(1,\infty)$.
\item[(4')] $\tree$ does not have non-trivial infrasup-fork convexity.
\end{enumerate}
\end{coro}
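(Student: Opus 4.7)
The plan is to fold the new item $(4')$ into the chain of equivalences from \cite{MendelNaor13} by establishing two additional implications, namely $(4')\Rightarrow(3)$ and $(2)\Rightarrow(4')$. Since the equivalence of $(1),(2),(3),(4)$ is already the content of Mendel and Naor's dichotomy, these two implications suffice to close the loop $(4')\Rightarrow(3)\Rightarrow(4)\Rightarrow(1)\Rightarrow(2)\Rightarrow(4')$.

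First, $(4')\Rightarrow(3)$ is immediate from the contrapositive of Proposition \ref{prop:Markov-fork}. That proposition asserts that Markov $q$-convexity implies fork cotype $q$ (quantitatively, $C_q^f(\metX)\le 2^{1/q}\Pi_q^M(\metX)$). Consequently, if $\tree$ fails to have fork cotype $q$ for any $q\in(1,\infty)$, then $\tree$ a fortiori fails to be Markov $q$-convex for any $q\in(1,\infty)$, which is precisely $(3)$.

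Next, for $(2)\Rightarrow(4')$ I would argue by contraposition: suppose $\tree$ has fork cotype $q$ for some $q\in(1,\infty)$. Fork cotype is a bi-Lipschitz invariant (indeed, $C_q^f(\metX)\le \cdist{\metY}(\metX)\,C_q^f(\metY)$ follows at once from the definition), so Proposition \ref{prop:bin-tree-dist} gives $\cdist{\tree}(\bin{k})=\Omega\bigl((\log k)^{1/q}\bigr)$, contradicting $\sup_{k\in\bN}\cdist{\tree}(\bin{k})<\infty$. Hence $(2)$ forces $\tree$ to fail to have fork cotype $q$ for every $q$, i.e.\ $(4')$.

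I do not expect any serious obstacle: both new implications are one-line consequences of results already proved in this section. The only point that warrants explicit verification is that fork cotype is bi-Lipschitz stable with the correct quantitative dependence, but this is clear since the left-hand side of \eqref{eq:forkcotypeq} scales like the $q$-th power of the co-Lipschitz constant while $\lip(f)$ on the right scales like the Lipschitz constant.
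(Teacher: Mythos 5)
Your argument is correct, and it reaches the conclusion by a genuinely different route than the paper's. The paper takes the opposite tack: it does not invoke $(4')\Rightarrow(3)$ or $(2)\Rightarrow(4')$ as separate links in a cycle, but instead observes that the edge-coloring argument in \cite{LNP09} used to prove $(4)\Rightarrow(1)$ only ever exploits the hypothesis through the consequence that $\cdist{\ell_q}(\tree)=\infty$, and that $(4')$ already suffices to guarantee this (since $\ell_q$ has fork cotype $q$ and fork cotype is a bi-Lipschitz invariant); hence the same edge-coloring argument yields $(4')\Rightarrow(1)$ directly. Your approach is the more modular one: you treat the equivalence $(1)\Leftrightarrow\cdots\Leftrightarrow(4)$ entirely as a black box and splice $(4')$ into the cycle with two one-line implications, $(4')\Rightarrow(3)$ via the contrapositive of Proposition~\ref{prop:Markov-fork} and $(2)\Rightarrow(4')$ via Proposition~\ref{prop:bin-tree-dist}, the second of which the paper leaves implicit. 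The paper's presentation has the advantage of exposing why the deep part of the Lee--Naor--Peres argument goes through unchanged under the formally weaker hypothesis $(4')$, whereas yours has the advantage of being self-contained modulo the stated equivalence and of making both directions of the new equivalence explicit. One small remark: when you negate $(4')$ you write ``$\tree$ has fork cotype $q$ for some $q\in(1,\infty)$''; the literal negation allows any $q\in(0,\infty)$, but since fork cotype $q$ implies fork cotype $q'$ for all $q'>q$, this is harmless.
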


The proof of the non-trivial implication $(4)\implies (1)$ in \cite{LNP09} is based on a delicate analysis of certain edge-colorings of trees and their relation to the $\ell_p$-distortion of trees. In a nutshell, if $(4)$ holds then $\cdist{\ell_p}(\tree)$ is unbounded, which in turn forces a certain coloring parameter to vanish. The vanishing of the coloring parameter is then utilized to show the presence of binary trees in $\tree$ with arbitrarily good distortion. Since it is sufficient to assume $(4')$ to guarantee that $\cdist{\ell_p}(\tree)$ is unbounded (e.g. via Proposition \ref{prop:bin-tree-dist}), the implication $(4')\implies (1)$ follows from the same edge-coloring based argument. 

\begin{rema}
It was shown in \cite{MendelNaor13} that the equivalence between $(1)$ and $(2)$ in Corollary \ref{cor:dicho} does not hold if the target space is an arbitrary metric space. Also, one can prove the analogue of Proposition \ref{prop:saturation} for binary trees using infrasup-fork convexity.
\end{rema}


The relaxations of the $p$-fork inequality that we considered in this section are formally significantly weaker, and it would be interesting to identify more examples of metric spaces satisfying these seemingly very weak inequalities. In fact, these examples must be found outside the realm of tree metrics and Banach spaces. 

Note that if an infinite metric tree $\tree$ admits an equivalent metric that satisfies the infrasup $p$-fork inequality \eqref{eq:super-relaxed-q-fork}, then Theorem \ref{thm:fork} says that $\tree$ has non-trivial infrasup-fork convexity, and by Proposition \ref{prop:bin-tree-dist} we have $\sup_{k\in \bN}\cdist{\tree}(\bin{k})=\infty$. Then it follows from the dichotomy in \cite{LNP09} that $\cdist{L_p}(\tree)\le \cdist{L_2}(\tree)<\infty$, and thus $\tree$ admits an equivalent metric that satisfies the $r$-fork inequality where $r=\max\{2,p\}$. Therefore, when $p\in[2,\infty)$ an infinite metric tree admits an equivalent metric that satisfies the infrasup $p$-fork inequality if and only if it admits an equivalent metric that satisfies the $p$-fork inequality. 

In the Banach space setting, we consider an alternative definition of uniform convexity via the following modulus which is a local analogue of the asymptotic modulus $\bar{\beta}$ naturally linked to property $(\beta)$:
\begin{equation}
\beta_\banX(\vep)\eqd \inf \Big\{\max_{i\in \{1,2\}} \Big\{1-\bnorm{\frac{z-x_i}{2}}_\banX \Big\} \colon \norm{z}_\banX, \norm{x_1}_\banX, \norm{x_2}_\banX \le 1, \norm{x_1-x_2}_\banX\ge \vep \Big\}.
\end{equation}
It is easily verified that for all $\vep\in(0,2)$, $\beta_\banX(\vep)>0$ if and only if there exists $\delta>0$ such that for all $z,x,y\in B_\banX$, if $\norm{x-y}_\banX\ge \vep$ then $\min\{\norm{\frac{z-x}{2}}_\banX, \norm{\frac{z-y}{2}}_\banX\}\le 1- \delta$.

The modulus $\beta_\banX$ is a ``fork variant", inspired by property $(\beta)$, of the classical $2$-point modulus of uniform convexity $\delta_\banX$:
\begin{equation}
\delta_\banX(\vep)\eqd \inf \Big\{1-\bnorm{\frac{x+y}{2}}_\banX \colon \norm{x}_\banX, \norm{y}_\banX \le 1, \norm{x-y}_\banX\ge \vep \Big\}.
\end{equation}
\begin{lemm}
\label{lem:2pt-3pt}
For all $\vep\in (0,2)$, 
\begin{equation}
\delta_\banX\Big(\frac{\vep}{2}\Big)\le \beta_\banX(\vep) \le 2 \delta_\banX(\vep).
\end{equation}
\end{lemm}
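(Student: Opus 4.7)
The two inequalities are handled separately, both by applying the classical modulus of convexity $\delta_\banX$ to well-chosen pairs of vectors in $B_\banX$.

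For the lower bound $\delta_\banX(\vep/2)\le\tilde{\delta}_\banX(\vep)$, I would argue by contradiction. Fix any tripod $(z,x_1,x_2)\in B_\banX^3$ with $\norm{x_1-x_2}_\banX\ge\vep$, and suppose $\max_{i\in\{1,2\}}\big(1-\norm{\frac{z-x_i}{2}}_\banX\big)<\delta_\banX(\vep/2)$. Then $\norm{\frac{z+(-x_i)}{2}}_\banX > 1-\delta_\banX(\vep/2)$ for both $i$. Applying the definition of $\delta_\banX$ to the pair $(z,-x_i)\in B_\banX\times B_\banX$ yields $\delta_\banX(\norm{z+x_i}_\banX)\le 1-\norm{\frac{z-x_i}{2}}_\banX<\delta_\banX(\vep/2)$, so the monotonicity of $\delta_\banX$ forces $\norm{z+x_i}_\banX<\vep/2$ for both $i$. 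The triangle inequality then produces the contradiction $\norm{x_1-x_2}_\banX=\norm{(z+x_1)-(z+x_2)}_\banX\le \norm{z+x_1}_\banX+\norm{z+x_2}_\banX<\vep$.

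For the upper bound $\tilde{\delta}_\banX(\vep)\le 2\delta_\banX(\vep)$, I would manufacture a near-optimal tripod from a near-optimal classical configuration via Hahn-Banach. Fix $\eta>0$ and choose $x,y\in B_\banX$ with $\norm{x-y}_\banX\ge\vep$ and $\norm{\frac{x+y}{2}}_\banX\ge 1-\delta_\banX(\vep)-\eta$. Let $\phi\in\banX^*$ be a norming functional of $\frac{x+y}{2}$, and select $z\in B_\banX$ with $\phi(z)\le -1+\eta$, which exists because $\norm{\phi}_{\banX^*}=1$. Combining $\phi(x)+\phi(y)=2\norm{\frac{x+y}{2}}_\banX$ with $\phi(x),\phi(y)\le 1$ forces both $\phi(x)$ and $\phi(y)$ to be at least $1-2\delta_\banX(\vep)-2\eta$. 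Setting $x_1=x$, $x_2=y$, the estimate $\norm{\frac{z-x_i}{2}}_\banX\ge |\phi(\tfrac{z-x_i}{2})|=\tfrac{\phi(x_i)-\phi(z)}{2}\ge 1-\delta_\banX(\vep)-\tfrac{3\eta}{2}$ for each $i$ gives the bound upon letting $\eta\to 0$ (in fact one obtains the sharper estimate $\tilde{\delta}_\banX(\vep)\le\delta_\banX(\vep)$).

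The delicate point is the lower bound. A tempting but \emph{insufficient} attempt is to apply the classical modulus directly to the pair $u_i=\frac{z-x_i}{2}$, which does satisfy $\norm{u_1-u_2}_\banX\ge\vep/2$ and $\norm{u_i}_\banX\le 1$: this only bounds the \emph{average} $\norm{\frac{u_1+u_2}{2}}_\banX$ and yields no control on the individual norms $\norm{u_i}_\banX$ that appear in the definition of $\tilde{\delta}_\banX$. The correct move is to apply the modulus to the two pairs $(z,-x_1)$ and $(z,-x_2)$ separately, which directly controls each $\norm{u_i}_\banX$ in terms of $\norm{z+x_i}_\banX$; the two resulting quantities then combine via the triangle inequality to produce the contradiction with $\norm{x_1-x_2}_\banX\ge\vep$.
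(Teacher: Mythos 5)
Your proof is correct. For the lower bound $\delta_\banX(\vep/2) \le \tilde{\delta}_\banX(\vep)$, your argument is essentially the paper's: both proofs hinge on applying the classical modulus to the pairs $(z,-x_i)$ and exploiting the triangle inequality $\norm{x_1-x_2}_\banX \le \norm{z+x_1}_\banX + \norm{z+x_2}_\banX$. The paper phrases it as a case split (if $\norm{z+x_1}_\banX \ge \vep/2$ use $x_1$, otherwise $\norm{z+x_2}_\banX > \vep/2$ and use $x_2$), while you phrase it as a contradiction; that requires an extra invocation of the monotonicity of $\delta_\banX$, but the content is the same. Your diagnostic remark about why applying $\delta_\banX$ to the pair $\frac{z-x_1}{2},\frac{z-x_2}{2}$ doesn't work is also accurate.

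For the upper bound your route is genuinely different, and stronger. The paper simply takes $z = -x$ in the definition of $\tilde{\delta}_\banX$, which forces a dichotomy (either $\norm{x}_\banX \le 1 - \tilde{\delta}_\banX(\vep)$ or $\norm{\frac{x+y}{2}}_\banX \le 1 - \tilde{\delta}_\banX(\vep)$); in the first case one only gets $\norm{\frac{x+y}{2}}_\banX \le 1 - \frac12\tilde{\delta}_\banX(\vep)$ by averaging with $\norm{y}_\banX\le 1$, and this loss of a factor $2$ is why the paper's bound is $\tilde{\delta}_\banX(\vep) \le 2\delta_\banX(\vep)$. Your Hahn--Banach construction — picking a norming functional $\phi$ for $\frac{x+y}{2}$ and a $z\in B_\banX$ with $\phi(z) \le -1+\eta$ — produces an explicit tripod witnessing $\tilde{\delta}_\banX(\vep) \le \delta_\banX(\vep)$, with no factor $2$. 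I have checked the algebra: from $\phi(x)+\phi(y)=2\norm{\frac{x+y}{2}}_\banX \ge 2-2\delta_\banX(\vep)-2\eta$ and $\phi(x),\phi(y)\le 1$ one gets $\phi(x_i)\ge 1-2\delta_\banX(\vep)-2\eta$, whence $\norm{\frac{z-x_i}{2}}_\banX \ge \frac{\phi(x_i)-\phi(z)}{2} \ge 1-\delta_\banX(\vep)-\frac{3\eta}{2}$, and the claim follows as $\eta\to 0$. Since the lemma is only used qualitatively in the paper (to show that uniform convexity of power type $q$ is equivalent to $\tilde\delta_\banX(\vep)\gtrsim \vep^q$), the constant doesn't matter for the application, which is presumably why the paper settled for the simpler $z=-x$ choice. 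Still, your improvement to $\tilde{\delta}_\banX \le \delta_\banX$ is a clean observation that the paper's approach misses.
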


\begin{proof}
Let $z,x_1,x_2\in B_\banX$ and $\norm{x_1-x_2}_\banX\ge \vep$. If $\norm{x_1+z}_\banX\ge \frac{\vep}{2}$ then $\norm{\frac{x_1 - z}{2}}_\banX\le 1-\delta_\banX(\frac{\vep}{2})$. Otherwise, $\norm{x_1+z}_\banX<\frac{\vep}{2}$ and this implies that $\norm{x_2+z}_\banX \ge \norm{x_2-x_1}-\norm{x_1+z}> \vep-\frac{\vep}{2}=\frac{\vep}{2}$.
Therefore, $\norm{\frac{x_2 - z}{2}}_\banX\le 1-\delta_\banX(\frac{\vep}{2})$. In any case, $\max_{i\in \{1,2\}} \Big\{1-\bnorm{\frac{z-x_i}{2}}_\banX\Big\}\ge \delta_\banX(\frac{\vep}{2})$, and the left-most inequality is proved.

For the right-most inequality, let $x,y\in B_\banX$ and $\norm{x-y}_\banX\ge \vep$. Take $z=-x$. By definition of $\beta_\banX$, either $\norm{\frac{x+y}{2}}_\banX\le 1-\beta_\banX(\vep)$ or $\norm{x}_\banX \le 1- \beta_\banX(\vep)$. In the former case, there is nothing to do. In the latter case, $\norm{\frac{x+y}{2}}_\banX \le \frac12 (1-\beta_\banX(\vep))+\frac12 = 1-\frac12 \beta_\banX(\vep)$, and the conclusion follows. 
\end{proof}

It follows immediately from Lemma \ref{lem:2pt-3pt} that a Banach space $\banX$ is uniformly convex if and only if $\beta_\banX(\vep)>0$ for all $\vep>0$. Note in passing that this provides a rather direct proof that uniformly convex spaces have property $(\beta)$. Quantitatively, $\banX$ is uniformly convex with power type $p$ if and only if $\beta_\banX(\vep)\gtrsim \vep^p$. It is also easy to see that if $\banX$ supports the infrasup $p$-fork inequality \eqref{eq:super-relaxed-q-fork}, then $\beta_\banX(\vep)\gtrsim \vep^p$, and thus $\banX$ is uniformly convex with power type $p$ by Lemma \ref{lem:2pt-3pt}. Consequently, by \cite{BCL94} $\banX$ is $p$-uniformly convex, and by \cite[Lemma 2.3]{MendelNaor13} it satisfies the $p$-fork inequality. Thus for Banach spaces, the $p$-fork inequality and the infrasup $p$-fork inequality are equivalent up to the value of the constants involved. 

For the sake of completeness, we provide a more direct proof of the fact above which uses neither \cite{BCL94} nor \cite{MendelNaor13} and for which it is easier to keep track of the value of the constant.

\begin{lemm}
Let $\banX$ be a Banach space. If $\beta_\banX(t)\ge \frac{1}{c}t^p$ then the infrasup $p$-fork inequality \eqref{eq:super-relaxed-q-fork} holds in $\banX$ with constant $c^{1/p}4^{-1}$.
\end{lemm}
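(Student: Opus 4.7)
The plan is to reduce the relaxed $q$-fork inequality to a single application of the hypothesis on $\tilde{\delta}_\banX$ after normalization. By translation-invariance of the norm, set $u \eqd w-z$, $v_1 \eqd x-z$, $v_2 \eqd y-z$. The target inequality then reads
\begin{equation*}
\frac{1}{2^q}\min\{\|u-v_1\|^q,\|u-v_2\|^q\}+\frac{\|v_1-v_2\|^q}{c}\le \max\{\|u\|^q,\|v_1\|^q,\|v_2\|^q\},
\end{equation*}
which upon identifying $c = (4K)^q$ is exactly the relaxed $q$-fork inequality with constant $K=c^{1/q}/4$.

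Next I would normalize. Let $M\eqd\max\{\|u\|,\|v_1\|,\|v_2\|\}$; the case $M=0$ is trivial, so assume $M>0$. Then $\hat u\eqd u/M$ and $\hat v_i\eqd v_i/M$ all lie in $B_\banX$. Let $\vep\eqd\|v_1-v_2\|/M$. If $\vep=0$ the second term on the left vanishes and the remaining inequality follows at once from the triangle inequality (each $\|u-v_i\|/2\le M$). Otherwise the assumption $\tilde\delta_\banX(\vep)\ge\vep^q/c$ applied to $\hat u,\hat v_1,\hat v_2$ yields
\begin{equation*}
\min_{i\in\{1,2\}}\Bigl\|\tfrac{\hat u-\hat v_i}{2}\Bigr\|\le 1-\frac{\vep^q}{c}=1-\frac{\|v_1-v_2\|^q}{M^q c}.
\end{equation*}

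The final step is to raise the displayed inequality to the $q$-th power. Here I would invoke the elementary bound $(1-t)^q\le 1-t$ for $q\ge 1$ and $t\in[0,1]$ (which holds because $a\eqd 1-t\in[0,1]$ and $a^q\le a$). This gives
\begin{equation*}
\min_i\Bigl\|\tfrac{\hat u-\hat v_i}{2}\Bigr\|^q\le 1-\frac{\|v_1-v_2\|^q}{M^q c}.
\end{equation*}
Multiplying through by $M^q$ and rearranging yields exactly the desired inequality, with $K=c^{1/q}/4$.

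The only place where anything subtle could enter is the conversion from the one-sided bound on $\min_i\|\cdot\|$ to the $q$-th power version, since the cheap bound $(1-t)^q\le 1-t$ presupposes $q\ge 1$; this restriction is consistent with the rest of the section (the $q$-tripod inequality is of interest for $q\ge 2$), so I do not expect a real obstacle. Otherwise the argument is a short translation/scaling reduction to a single use of the three-point modulus.
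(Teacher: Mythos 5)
Your proof is correct and matches the paper's argument step for step: translate so $z=0$, rescale so the maximum norm among $w,x,y$ is $1$, apply the hypothesis on $\tilde\delta_\banX$ to obtain a bound on $\min_i\left\|\tfrac{w-x_i}{2}\right\|$, then convert to the $q$-th power statement using $a^q\le a$ for $a\in[0,1]$. You are slightly more explicit than the paper about the normalization, the degenerate case $\|x-y\|=0$, and the tacit requirement $q\ge 1$ needed for the final power step, but these are small polishings of the same argument rather than a different route.
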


\begin{proof}
Assume that $\beta_\banX(t)\ge \frac{1}{c}t^p$ and let $w, z, x, y \in \banX$. Since the distance in $\banX$ is translation invariant, we may assume $z=0$. Also, by scale invariance of \eqref{eq:super-relaxed-q-fork} we can assume that $w, x, y \in B_\banX$. Thus \eqref{eq:super-relaxed-q-fork} reduces to
\begin{equation}
\label{eq:aux10}
\frac{1}{2^p}\min\{\norm{w-x}^p,\norm{w-y}^p\}+\frac{\norm{x-y}^p}{4^{p}K^p}\le 1.
\end{equation} 

Now observe that $\min\{\frac{\norm{w-x}^p}{2^p}; \frac{\norm{w-x}^p}{2^p}\} \le \min\{\frac{\norm{w-x}}{2}; \frac{\norm{w-y}}{2}\}$ whenever $w,x, y \in B_\banX$.
Therefore, \eqref{eq:aux10} follows from the fact that by definition of $\tilde{\delta}_\banX$ it holds $\min\{\frac{\norm{w-x}}{2}; \frac{\norm{w-y}}{2}\}\le 1-\frac{1}{c}\norm{x-y}^p$, since without loss of generality we may assume that $\norm{x-y}>0$.
\end{proof}


\section{A characterization of non-negative curvature}\label{sec:non-negative}

Recall that a geodesic metric space has \emph{non-negative curvature} if for all $x,y,z\in \met X$ and $m_{xy}$ a midpoint of $x$ and $y$,
\begin{equation}
2\dX(z,m_{xy})^2+\frac{\dX(x,y)^2}{2}\ge \dX(z,x)^2+\dX(z,y)^2
\end{equation}
Austin and Naor \cite{AustinNaor} showed that a geodesic metric space $\metXd$ with non-negative curvature satisfies the $2$-fork inequality with constant $K=1$. In this section we prove the missing implication in Theorem \ref{thm:D} adapting an argument of Lebedeva and Petrunin \cite{LebedevaPetrunin10} which is used to characterize non-negative curvature in terms of a certain fork inequality.

\begin{prop}
If a geodesic metric space $\metXd$ satisfies the $2$-fork inequality with constant $K=1$, then $\met X$ has non-negative curvature.
\end{prop}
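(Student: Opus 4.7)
The plan is to prove the non-negative curvature inequality
\begin{equation*}
\dX(z,x)^2 + \dX(z,y)^2 \le 2\,\dX(z,m)^2 + \frac{\dX(x,y)^2}{2}
\end{equation*}
(where $m$ is any midpoint of $x$ and $y$) by a self-improvement iteration starting from a weaker version of the same inequality. First I would apply the $2$-tripod inequality with $K=1$ to the quadruple obtained by placing $z$ in the $w$-slot and $m$ in the $z$-slot. Since $\dX(m,x) = \dX(m,y) = \dX(x,y)/2$, the right-hand side of the tripod inequality collapses to $\frac{1}{2}\dX(z,m)^2 + \frac{1}{8}\dX(x,y)^2$, and after clearing denominators one obtains the weak inequality
\begin{equation*}
\dX(z,x)^2 + \dX(z,y)^2 \le 4\,\dX(z,m)^2 + \frac{\dX(x,y)^2}{2},
\end{equation*}
which matches the non-negative curvature inequality except that the constant in front of $\dX(z,m)^2$ is $4$ rather than the sharp value $2$.

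The crux of the argument, adapted from Lebedeva and Petrunin, is to show that if the inequality $\dX(z,x)^2 + \dX(z,y)^2 \le C\,\dX(z,m)^2 + \frac{\dX(x,y)^2}{2}$ holds universally for some $C$, then it also holds with $C$ replaced by $1 + C/2$. Given $x, y, z$ and a midpoint $m$ of $[x,y]$, I would choose a midpoint $\bar m$ of $[z,m]$, which exists by the geodesic hypothesis, so that $\dX(\bar m, z) = \dX(\bar m, m) = \dX(z,m)/2$. Applying the inductive hypothesis to the triangle $x, y, \bar m$ with midpoint $m$ of $[x,y]$ produces
\begin{equation*}
\dX(\bar m, x)^2 + \dX(\bar m, y)^2 \le \frac{C}{4}\,\dX(z,m)^2 + \frac{\dX(x,y)^2}{2}.
\end{equation*}
Then I would invoke the tripod inequality once more, on the quadruple in which $z$ occupies the $w$-slot and $\bar m$ occupies the $z$-slot. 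Substituting the previous bound on $\dX(\bar m, x)^2 + \dX(\bar m, y)^2$ into the right-hand side and simplifying yields
\begin{equation*}
\dX(z,x)^2 + \dX(z,y)^2 \le \Bigl(1 + \frac{C}{2}\Bigr)\,\dX(z,m)^2 + \frac{\dX(x,y)^2}{2},
\end{equation*}
which is exactly the desired improvement.

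Iterating the map $C \mapsto 1 + C/2$ starting from $C_0 = 4$ gives the sequence $C_n = 2 + 2^{1-n}$, decreasing monotonically to the fixed point $2$. For every $\varepsilon > 0$ the inequality holds with constant $2+\varepsilon$; letting $\varepsilon \to 0$ delivers non-negative curvature at the sharp constant.

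The step I expect to be the most delicate is identifying the iteration scheme: the combination of the inductive hypothesis applied at the auxiliary midpoint $\bar m$ on $[z,m]$ with the tripod inequality applied with $\bar m$ in the $z$-slot must produce precisely the contraction $C \mapsto 1 + C/2$, whose fixed point is the sharp constant. Alternative subdivision schemes---for instance, applying the inductive hypothesis to the two sub-triangles along $[x,y]$ and recombining---instead \emph{worsen} the constant, so the key insight is to subdivide along $[z,m]$ rather than along $[x,y]$.
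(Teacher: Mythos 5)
Your proposal is correct and follows essentially the same approach as the paper, which also adapts the Lebedeva--Petrunin iteration along the geodesic segment $[z,m]$. The only cosmetic difference is the direction of the bookkeeping: you run the contraction $C \mapsto 1 + C/2$ forward from $C_0 = 4$ to the fixed point $2$, whereas the paper places points $z_n$ halving toward $m$, defines the local constants $\alpha_n$, establishes the a priori bound $\alpha_n \le 4$ and the recursion $\alpha_{n+1} \ge 2\alpha_n - 2$, and concludes by contradiction that $\alpha_0 \le 2$ (note that $\alpha \mapsto 2\alpha - 2$ is precisely the inverse of your map $C \mapsto 1 + C/2$, so the two recursions are the same). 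Both derive the weak constant $4$ by applying the tripod inequality to $(z,x,y,m)$, both iterate by introducing the midpoint of $[z,m]$ and applying the tripod inequality to the resulting quadruple, and both identify subdivision along $[z,m]$ as the crucial choice.
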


\begin{proof}
Let $x,y,z\in \met X$ and let $m_{xy}$ be a midpoint of $x$ and $y$. Since $\met X$ is geodesic there exists a geodesic connecting $m_{xy}$ and $z$, and for each $n \ge 1$, a point $z_n$ on this geodesic such that $\dX(m_{xy},z_n)=\frac{\dX(m_{xy},z)}{2^n}$. Set $z_0=z$ and  $\alpha_n$ to be such that 
\begin{equation}
\alpha_n\dX(z_n,m_{xy})^2=\dX(z_n,x)^2+\dX(z_n,y)^2-\frac{\dX(x,y)^2}{2}
\end{equation}

Note that it is sufficient to show that $\alpha_0\le 2$ in order to show that $\metXd$ has non-negative curvature. Observe first that the $2$-fork inequality with constant $K=1$ applied to $z_n,x,y,m_{xy}$ gives that for all $n\ge 1$,
\begin{equation}
\dX(z_n,x)^2+\dX(z_n,y)^2+\frac{\dX(x,y)^2}{2}\le 4 \dX(m_{xy},z)^2+2\dX(m_{xy},x)^2+2\dX(m_{xy},y)^2
\end{equation}
and thus 
\begin{align*}
\dX(z_n,x)^2+\dX(z_n,y)^2-\frac{\dX(x,y)^2}{2} & \le 4 \dX(m_{xy},z_n)^2+2\dX(m_{xy},x)^2+2\dX(m_{xy},y)^2-\dX(x,y)^2\\
									& =  4 \dX(m_{xy},z_n)^2 + 2\frac{\dX(x,y)^2}{4}+2\frac{\dX(x,y)^2}{4}-\dX(x,y)^2\\
									& =  4 \dX(m_{xy},z_n)^2,
\end{align*}
which means that $\alpha_n\le 4$ for all $n\ge 1$.

Now if we subtract $\frac{\dX(x,y)^2}{2}$ to the $2$-fork inequality with constant $1$ applied to $z_{n+1},x,y,z_{n}$, we have 
\begin{equation*}
\dX(z_{n+1},x)^2+\dX(z_{n+1},y)^2+2\dX(z_{n+1},z_{n})^2-\frac{\dX(x,y)^2}{2}\ge \frac{\dX(z_n,x)^2}{2}+\frac{\dX(z_n,y)^2}{2}-\frac{\dX(x,y)^2}{4}.
\end{equation*}
Hence,
\begin{equation*}
\alpha_{n+1}\dX(z_{n+1},m_{xy})^2\ge \frac{\alpha_n}{2}\dX(z_n,m_{xy})^2-2\dX(z_{n+1},z_{n})^2
\end{equation*}
which ultimately gives
\begin{equation*}
\alpha_{n+1}\frac{\dX(z,m_{xy})^2}{2^{2(n+1)}}\ge \frac{\alpha_n}{2}\frac{\dX(z,m_{xy})^2}{2^{2n}}-2\dX(z_{n+1},z_{n})^2.
\end{equation*}
Observe now that since the  $z_n$'s are on same geodesic
$$\dX(z_{n+1},z_n)=\frac{\dX(m_{xy},z)}{2^n}-\frac{\dX(m_{xy},z)}{2^{n+1}}=\frac{\dX(m_{xy},z)}{2^{n+1}}.$$

Then,

$$\alpha_{n+1}\ge \frac{2^{2n+2}}{2^{2n+1}}\alpha_n-\frac{2\cdot 2^{2n+2}}{2^{2n+2}}\ge 2\alpha_n-2.$$

Assume that $\alpha_0>2$. Then a simple induction gives that $\alpha_n \geq 2^n(\alpha_0-2)+2$ and hence $\lim_n \alpha_n=\infty$, contradicting the fact that $\alpha_n\le 4$. Therefore $\alpha_0\le 2$ and the conclusion follows.
\end{proof}

\section{Concluding remarks and open problems}
\label{sec:conclusion}
Our work raises a myriad of natural questions and problems. We will highlight a few of them we feel are particularly important and most likely challenging.  

It follows from Corollary \ref{cor:quotient} that umbel $p$-convexity is stable under uniform homeomorphisms between Banach spaces. Because of this fact, umbel convexity cannot settle the metric characterization of Banach spaces with property $(\beta_p)$. Indeed, Kalton showed  \cite{Kalton13} that given a sequence $\{\ban F_n\}_{\ge 1}$ that is dense in the Banach-Mazur compactum of finite-dimensional spaces, the Banach space $\ban C_p\eqd (\sum_{n=1}^\infty \ban F_n)_{\ell_p}$ is uniformly homeomorphic to $\ban K_p\eqd (\sum_{n=1}^\infty \ban F_n)_{\ban T_p}\oplus (\sum_{n=1}^\infty \ban F_n)_{\ban T_p}$ where $\ban T_p$ is the $p$-convexification of Tsirelson space $\ban T$. Therefore $\ban K_p$ is umbel $p$-convex since $\ban C_p$ has property $(\beta_p)$, but Kalton observed that $\ban K_p$ does not admit an equivalent norm that is asymptotically uniformly convex with power type $p$, and by \cite{DKR16} does not admit an equivalent norm with property $(\beta_p)$. The space $\ban K_p$ is thus an example of a Banach space that is umbel $p$-convex and that does not admit an equivalent norm with property $(\beta)$ with power type $p$. This is in stark contrast with the renorming Theorem \ref{thm:MN-LNP} for Markov convexity. The stochastic apparatus of Markov convexity is a powerful tool that is dearly missed in the asymptotic setting, and a new idea is needed to solve the following problem.

\begin{prob}
\label{pb:betap}
For a given $p\in(1,\infty)$, find a metric characterization of the class of Banach spaces admitting an equivalent norm with property $(\beta_p)$.
\end{prob}

Interestingly, it was shown in \cite{DKR16} that $\ban K_p$ admits for every $\vep>0$ an equivalent norm with property $(\beta_{p+\vep})$ and the next problem arises naturally.
 
\begin{prob}
\label{pb:renorming}
If Banach space is umbel $p$-convex for some $p\in(1,\infty)$, does it admit for all $q>p$ an equivalent norm with property $(\beta_q)$?
\end{prob}

Our work shows that if a Banach space is umbel $p$-convex for some $p\in(1,\infty)$, then it admits an equivalent norm with property $(\beta_q)$ for some $q>1$. The difficulty in solving Problem \ref{pb:betap} and Problem \ref{pb:renorming} stems from the fact that the renorming theory for spaces with property $(\beta)$ is not fully grasped yet as it currently goes through the much better understood asymptotic uniformly convex/smooth renorming theories.

A tentatively more tractable, and somewhat related problem, is a local analogue of Problem \ref{pb:renorming}.

\begin{prob}
\label{pb:renorming-local}
If a Banach space is fork $p$-convex for some $p\in [2,\infty)$, does it admit for $q=p$ (or more modestly for all $q>p$) an equivalent norm which is $q$-uniformly convex?
\end{prob}

In Section \ref{sec:examples} we showed that $(\bH(\ell_2),\sd_{cc})$ is infrasup-umbel $2$-convex. In particular, this implies that $\cdist{\bH(\ell_2)}(\tree^\omega_k)=\Omega \big( \sqrt{\log k} \big)$, and this is optimal by Bourgain's tree embedding (see Proposition \ref{prop:am-dist}). This has to be contrasted with the fact that $(\bH(\ell_2),\sd_{cc})$ is only Markov $4$-convex, and a Markov convexity-based argument gives $\cdist{\bH(\ell_2)} \big( \bin{k} \big)=\Omega \big( (\log k)^{1/4} \big)$. This lower bound is suboptimal since S. Li \cite{Li16} proved, using a refinement of an argument of Matousek \cite{Matousek99}, that $\cdist{\bH(\ell_2)}\big( \bin{k} \big)=\Omega \big( \sqrt{\log k} \big)$, and this latter bound is optimal by Bourgain's tree embedding.
By Theorem~\ref{thm:Sean} and Proposition~\ref{prop:Markov-fork} $(\bH(\ell_2),\sd_{cc})$ is infrasup-fork $p$-convex for all $p \geq 4$, and it would be interesting to compute its exact infrasup-fork convexity.

\begin{prob}
\label{pb:forkcotypeH}
Is $(\bH(\omega_\banX),\sd_{cc})$ infrasup-fork $p$-convex whenever $\banX$ is $p$-uniformly convex?
\end{prob}

If Problem \ref{pb:forkcotypeH} has a positive answer, then the notion of infrasup-fork convexity would be a metric invariant that could detect the right order of magnitude for the distortion required to embed binary trees into the infinite Heisenberg group, something that Markov convexity is unable to achieve.

In the proof of Theorem \ref{thm:Heisenberg}, we showed that $(\bH(\omega_\banX),\sd_{cc})$ admits an equivalent quasi-metric satisfying the $2p$-fork inequality \eqref{eq:qfork} whenever $\banX$ is $p$-uniformly convex. The following asymptotic problem remains open.

\begin{prob} \label{prob:Heisenbergumbel}
Does $(\bH(\omega_\banX),\sd_{cc})$ admit an equivalent quasi-metric satisfying the $p$-umbel inequality whenever $\banX$ has property $(\beta_p)$? More generally, is $(\bH(\omega_\banX),\sd_{cc})$ umbel $p$-convex whenever $\banX$ has property $(\beta_p)$?
\end{prob}

The scale-invariant parallelogram convexity inequality \eqref{eq:UCmod} defining $p$-uniform convexity in Banach spaces has a natural analogue in Heisenberg groups, and the proof of Theorem \ref{thm:Heisenberg} goes through establishing this inequality. The difficulty in adapting the proof to solve Problem \ref{prob:Heisenbergumbel} exactly lies in the fact that no scale-invariant ``parallelogram" inequality exists for property $(\beta_p)$. 

The reason why $\bH(\ell_2)$ cannot be Markov $p$-convex for any $p<4$ comes from the fact that certain Laakso graphs, which are known not to have non-trivial Markov convexity, can be embedded well enough in $\bH(\bR)$, and hence in $\bH(\ell_2)$. It seems possible that Laakso or diamond graph constructions could have non-trivial infrasup-fork convexity and thus infrasup-fork convexity would be a metric invariant capable of preventing bi-Lipschitz embeddings of trees into diamond like structures. It is worth pointing out that it was proved by Ostrovskii \cite{Ostrovskii14} (see also \cite{LNOO18}) that binary trees do not embed equi-bi-Lipschitzly into diamond graphs. Note also that diamond convexity is a metric invariant that prevents bi-Lipschitz embeddings of diamond or Laakso graphs into trees, since it was proved in \cite{EMN} that trees are diamond $2$-convex.

\begin{prob} Let $\gra G_k$ be one of the following graphs: the diamond graph $\dia_k$, the Laakso graph $\sL_k$, or their countably branching versions $\dia^\omega_k$ and $\sL^\omega_k$, respectivelly. 
Are the parameters $\sup_{k\in \bN} \Pi^{isf}_p(\gra G_k)$, $\sup_{k\in \bN} \Pi^{isu}_p(\gra G_k)$, or $\sup_{k\in \bN} \Pi^u_p(\gra G_k)$ finite for some $p<\infty$?
\end{prob}

It would be very interesting to exhibit examples of metric spaces that admit an equivalent metric satisfying the infrasup $p$-fork inequality but with no equivalent metric satisfying the $p$-fork inequality. In light of Theorem \ref{thm:Heisenberg}, Proposition 2.3 in \cite{Li16} (or the proof of Theorem \ref{thm:Sean}), and the discussion above, a natural candidate for $p=2$ is the infinite Heisenberg group.

\begin{prob}\label{pb:heisenberg-is2fork}
Does $(\bH(\ell_2),\sd_{cc})$ admit an equivalent (quasi)-metric satisfying the infrasup $2$-fork inequality?
\end{prob}

Finally, we do not know whether Markov $p$-convexity implies fork $p$-convexity. Loosely speaking, the issue is that the left-hand side of the Markov $p$-convexity inequality involves an average over all levels of the binary tree, while the left-hand side of the fork $p$-convexity inequality involves an average over dyadic levels\footnote{For similar reasons, we do not know the relationship between the previously mentioned Poincar{\'e} inequality on binary trees \cite[page 382]{LMN02} and Markov convexity or fork convexity.}.

\begin{prob}\label{pb:markov->fork}
Does Markov $p$-convexity imply fork $p$-convexity?
\end{prob}

\medskip\noindent
{\bf Acknowledgements.}  We would like to thank the anonymous referee for many meaningful comments and for suggesting that we adopt a more intuitive terminology for some of the inequalities and metric invariants considered.

\appendix

\section{Table of Inequalities}
\label{app:table}
For the convenience of the reader, we summarize in the following table the main inequalities introduced or recalled in the paper. The table is organized so that the following three facts hold:
\begin{itemize}
    \item An inequality in row $i$ column $j$ implies the inequality in row $k$ column $j$ for $k > i$ (with the exception of Markov $p$-convexity implying fork $p$-convexity, see Problem~\ref{pb:markov->fork}).
    \item A point-inequality in row $i$ column $j$ implies the Poincar{\'e} inequality in row $i$ column $j+1$.
    \item A local inequality in row $i$ column $j$ implies the asymptotic inequality in row $i$ column $j+2$.
\end{itemize}

\begin{table}[h]
\resizebox{\textwidth}{!}{%
\begin{tabular}{|c|c|c|c|} \hline
\multicolumn{2}{|c|}{\bf Local} & \multicolumn{2}{|c|}{\bf Asymptotic} \\ \hline
\bf $4$-point inequality & \bf Poincar{\'e} inequality & \bf $\omega$-point inequality & \bf Poincar{\'e} inequality \\ \hline\hline
$p$-fork inequality \eqref{eq:qfork} & Markov $p$-convexity \eqref{eq:p-Markov}  && \\ \hline
relaxed $p$-fork inequality \eqref{eq:relaxed-q-fork} & fork $p$-convexity \eqref{eq:forkqconvex} & $p$-umbel inequality \eqref{eq:pumbelmetric} & umbel $p$-convexity \eqref{eq:umbel-p-convex} \\ \hline
&& sup $p$-umbel inequality \eqref{eq:relaxed-p-umbel} & sup-umbel $p$-convexity \eqref{eq:relaxed-umbel-p} \\ \hline
infrasup $p$-fork inequality \eqref{eq:super-relaxed-q-fork} & infrasup-fork $p$-convexity \eqref{eq:forkcotypeq} & infrasup $p$-umbel inequality \eqref{eq:superrelaxed-p-umbel} & infrasup-umbel $p$-convexity \eqref{eq:umbelcotypep} \\ \hline
\end{tabular}}
\end{table}

\bibliographystyle{alpha}

\begin{bibdiv}
\begin{biblist}

\bib{AustinNaor}{article}{
      author={Austin, T.},
      author={Naor, A.},
       title={On the bi-{L}ipschitz structure of {W}asserstein spaces, personal
  communication},
}

\bib{Asplund67}{article}{
      author={Asplund, E.},
       title={Averaged norms},
        date={1967},
        ISSN={0021-2172},
     journal={Israel J. Math.},
      volume={5},
       pages={227\ndash 233},
         url={https://urldefense.com/v3/__https://doi.org/10.1007/BF02771611__;!!Mih3wA!HpBzvK8JkgeDu6nhaw1ZmAjVSdFmp-kcDLsM8fjNEunNTwKyxP6gSKc1wloo8dEHCoSQiHGHCvuHC7GfxQ$ },
      review={\MR{222610}},
}

\bib{Ball13}{article}{
      author={Ball, K.},
       title={The {R}ibe programme},
        date={2013},
        ISSN={0303-1179},
     journal={Ast\'erisque},
      number={352},
       pages={Exp. No. 1047, viii, 147\ndash 159},
        note={S{\'e}minaire Bourbaki. Vol. 2011/2012. Expos{\'e}s 1043--1058},
      review={\MR{3087345}},
}

\bib{BCL94}{article}{
      author={Ball, K.},
      author={Carlen, E.~A.},
      author={Lieb, E.~H.},
       title={Sharp uniform convexity and smoothness inequalities for trace
  norms},
        date={1994},
        ISSN={0020-9910},
     journal={Invent. Math.},
      volume={115},
      number={3},
       pages={463\ndash 482},
         url={https://urldefense.com/v3/__https://doi.org/10.1007/BF01231769__;!!Mih3wA!HpBzvK8JkgeDu6nhaw1ZmAjVSdFmp-kcDLsM8fjNEunNTwKyxP6gSKc1wloo8dEHCoSQiHGHCvsMwTtTXQ$ },
      review={\MR{1262940}},
}

\bib{Beauzamy79}{article}{
      author={Beauzamy, B.},
       title={Banach-{S}aks properties and spreading models},
        date={1979},
        ISSN={0025-5521},
     journal={Math. Scand.},
      volume={44},
      number={2},
       pages={357\ndash 384},
         url={https://urldefense.com/v3/__https://doi.org/10.7146/math.scand.a-11818__;!!Mih3wA!HpBzvK8JkgeDu6nhaw1ZmAjVSdFmp-kcDLsM8fjNEunNTwKyxP6gSKc1wloo8dEHCoSQiHGHCvs4RCnL9Q$ },
      review={\MR{555227}},
}

\bib{BIM05}{article}{
      author={Burger, M.},
      author={Iozzi, A.},
      author={Monod, N.},
       title={Equivariant embeddings of trees into hyperbolic spaces},
        date={2005},
        ISSN={1073-7928},
     journal={Int. Math. Res. Not.},
      number={22},
       pages={1331\ndash 1369},
         url={https://urldefense.com/v3/__https://doi.org/10.1155/IMRN.2005.1331__;!!Mih3wA!HpBzvK8JkgeDu6nhaw1ZmAjVSdFmp-kcDLsM8fjNEunNTwKyxP6gSKc1wloo8dEHCoSQiHGHCvudacBFRQ$ },
      review={\MR{2152540}},
}

\bib{BJLPS99}{article}{
      author={Bates, S.},
      author={Johnson, W.~B.},
      author={Lindenstrauss, J.},
      author={Preiss, D.},
      author={Schechtman, G.},
       title={Affine approximation of {L}ipschitz functions and nonlinear
  quotients},
        date={1999},
        ISSN={1016-443X},
     journal={Geom. Funct. Anal.},
      volume={9},
      number={6},
       pages={1092\ndash 1127},
         url={https://urldefense.com/v3/__https://doi.org/10.1007/s000390050108__;!!Mih3wA!HpBzvK8JkgeDu6nhaw1ZmAjVSdFmp-kcDLsM8fjNEunNTwKyxP6gSKc1wloo8dEHCoSQiHGHCvv7qKkb5g$ },
      review={\MR{1736929}},
}

\bib{Baudier-et-all17}{article}{
	author = {Baudier, F.},
			author =  {Causey, R.},
			author = {Dilworth, S.},
			author = {Kutzarova, D.},
			author = {Randrianarivony, N. L.},
			author = {Schlumprecht, Th.},
			author = {Zhang, S.},
	date-modified = {2020-09-23 14:09:07 -0500},
	doi = {10.1016/j.jfa.2017.05.013},
	fjournal = {Journal of Functional Analysis},
	issn = {0022-1236},
	journal = {J. Funct. Anal.},
	mrclass = {46B85 (05C90 46B06 46B20 46B80)},
	mrnumber = {3695891},
	number = {10},
	pages = {3150--3199},
	title = {On the geometry of the countably branching diamond graphs},
	url = {https://urldefense.com/v3/__https://doi.org/10.1016/j.jfa.2017.05.013__;!!Mih3wA!HpBzvK8JkgeDu6nhaw1ZmAjVSdFmp-kcDLsM8fjNEunNTwKyxP6gSKc1wloo8dEHCoSQiHGHCvtUDwM3kQ$ },
	volume = {273},
	year = {2017},
}

\bib{BKL10}{article}{
      author={Baudier, F.},
      author={Kalton, N.~J.},
      author={Lancien, G.},
       title={A new metric invariant for {B}anach spaces},
        date={2010},
        ISSN={0039-3223},
     journal={Studia Math.},
      volume={199},
      number={1},
       pages={73\ndash 94},
         url={https://urldefense.com/v3/__https://doi.org/10.4064/sm199-1-5__;!!Mih3wA!HpBzvK8JkgeDu6nhaw1ZmAjVSdFmp-kcDLsM8fjNEunNTwKyxP6gSKc1wloo8dEHCoSQiHGHCvsATgfs1A$ },
      review={\MR{2652598}},
}

\bib{BLMS_JIMJ}{article}{
      author={{Baudier}, F.},
      author={{Lancien}, G.},
      author={{Motakis}, P.},
      author={{Schlumprecht}, Th.},
     TITLE = {A new coarsely rigid class of {B}anach spaces},
   JOURNAL = {J. Inst. Math. Jussieu},
  FJOURNAL = {Journal of the Institute of Mathematics of Jussieu. JIMJ.
              Journal de l'Institut de Math\'{e}matiques de Jussieu},
    VOLUME = {20},
      YEAR = {2021},
    NUMBER = {5},
}

\bib{BLMS_IJM}{article}{
      author={{Baudier}, F.~P.},
      author={{Lancien}, G.},
      author={{Motakis}, P.},
      author={{Schlumprecht}, Th.},
     TITLE = {The geometry of {H}amming-type metrics and their embeddings
              into {B}anach spaces},
   JOURNAL = {Israel J. Math.},
  FJOURNAL = {Israel Journal of Mathematics},
    VOLUME = {244},
      YEAR = {2021},
    NUMBER = {2},
    }

\bib{BLS_JAMS}{article}{
      author={Baudier, F.},
      author={Lancien, G.},
      author={Schlumprecht, Th.},
       title={The coarse geometry of {T}sirelson's space and applications},
        date={2018},
        ISSN={0894-0347},
     journal={J. Amer. Math. Soc.},
      volume={31},
      number={3},
       pages={699\ndash 717},
         url={https://urldefense.com/v3/__https://doi.org/10.1090/jams/899__;!!Mih3wA!HpBzvK8JkgeDu6nhaw1ZmAjVSdFmp-kcDLsM8fjNEunNTwKyxP6gSKc1wloo8dEHCoSQiHGHCvurv58fVw$ },
      review={\MR{3787406}},
}

\bib{BMW86}{article}{
      author={Bourgain, J.},
      author={Milman, V.},
      author={Wolfson, H.},
       title={On type of metric spaces},
        date={1986},
        ISSN={0002-9947},
     journal={Trans. Amer. Math. Soc.},
      volume={294},
      number={1},
       pages={295\ndash 317},
         url={https://urldefense.com/v3/__https://doi.org/10.2307/2000132__;!!Mih3wA!HpBzvK8JkgeDu6nhaw1ZmAjVSdFmp-kcDLsM8fjNEunNTwKyxP6gSKc1wloo8dEHCoSQiHGHCvsUgyRsxA$ },
      review={\MR{819949}},
}

\bib{Bourgain86}{article}{
      author={Bourgain, J.},
       title={The metrical interpretation of superreflexivity in {B}anach
  spaces},
        date={1986},
        ISSN={0021-2172},
     journal={Israel J. Math.},
      volume={56},
      number={2},
       pages={222\ndash 230},
         url={https://urldefense.com/v3/__https://doi.org/10.1007/BF02766125__;!!Mih3wA!HpBzvK8JkgeDu6nhaw1ZmAjVSdFmp-kcDLsM8fjNEunNTwKyxP6gSKc1wloo8dEHCoSQiHGHCvvW0gUKBA$ },
      review={\MR{880292}},
}

\bib{BaudierZhang16}{article}{
      author={Baudier, F.~P.},
      author={Zhang, S.},
       title={{$(\beta)$}-distortion of some infinite graphs},
        date={2016},
        ISSN={0024-6107},
     journal={J. Lond. Math. Soc. (2)},
      volume={93},
      number={2},
       pages={481\ndash 501},
         url={https://urldefense.com/v3/__https://doi.org/10.1112/jlms/jdv074__;!!Mih3wA!HpBzvK8JkgeDu6nhaw1ZmAjVSdFmp-kcDLsM8fjNEunNTwKyxP6gSKc1wloo8dEHCoSQiHGHCvuAwtP9uA$ },
      review={\MR{3483124}},
}

\bib{CDPT07}{book}{
      author={Capogna, L.},
      author={Danielli, D.},
      author={Pauls, S.~D.},
      author={Tyson, J.~T.},
       title={An introduction to the {H}eisenberg group and the
  sub-{R}iemannian isoperimetric problem},
      series={Progress in Mathematics},
   publisher={Birkh\"{a}user Verlag, Basel},
        date={2007},
      volume={259},
        ISBN={978-3-7643-8132-5; 3-7643-8132-9},
      review={\MR{2312336}},
}

\bib{CauseyDilworth17}{article}{
    AUTHOR = {Causey, R. M.},
    AUTHOR = {Dilworth, S. J.},
     TITLE = {Metric characterizations of super weakly compact operators},
   JOURNAL = {Studia Math.},
  FJOURNAL = {Studia Mathematica},
    VOLUME = {239},
      YEAR = {2017},
    NUMBER = {2},
     PAGES = {175--188},
      ISSN = {0039-3223,1730-6337},
   MRCLASS = {46B85 (47B10)},
  MRNUMBER = {3688802},
MRREVIEWER = {Jan\ Hendrik\ Fourie},
       DOI = {10.4064/sm8645-3-2017},
       URL = {https://urldefense.com/v3/__https://doi.org/10.4064/sm8645-3-2017__;!!Mih3wA!HpBzvK8JkgeDu6nhaw1ZmAjVSdFmp-kcDLsM8fjNEunNTwKyxP6gSKc1wloo8dEHCoSQiHGHCvsnMTimjg$ },
}

\bib{CheegerKleiner06}{incollection}{
      author={Cheeger, J.},
      author={Kleiner, B.},
       title={On the differentiability of {L}ipschitz maps from metric measure
  spaces to {B}anach spaces},
        date={2006},
   booktitle={Inspired by {S}. {S}. {C}hern},
      series={Nankai Tracts Math.},
      volume={11},
   publisher={World Sci. Publ., Hackensack, NJ},
       pages={129\ndash 152},
         url={https://urldefense.com/v3/__https://doi.org/10.1142/9789812772688_0006__;!!Mih3wA!HpBzvK8JkgeDu6nhaw1ZmAjVSdFmp-kcDLsM8fjNEunNTwKyxP6gSKc1wloo8dEHCoSQiHGHCvvqGnX08g$ },
      review={\MR{2313333}},
}

\bib{DKLR14}{article}{
      author={Dilworth, S.~J.},
      author={Kutzarova, D.},
      author={Lancien, G.},
      author={Randrianarivony, N.~L.},
       title={Asymptotic geometry of {B}anach spaces and uniform quotient
  maps},
        date={2014},
        ISSN={0002-9939},
     journal={Proc. Amer. Math. Soc.},
      volume={142},
      number={8},
       pages={2747\ndash 2762},
         url={https://urldefense.com/v3/__https://doi.org/10.1090/S0002-9939-2014-12001-6__;!!Mih3wA!HpBzvK8JkgeDu6nhaw1ZmAjVSdFmp-kcDLsM8fjNEunNTwKyxP6gSKc1wloo8dEHCoSQiHGHCvuGepnIew$ },
      review={\MR{3209329}},
}

\bib{DKLR17}{article}{
      author={Dilworth, S.~J.},
      author={Kutzarova, D.},
      author={Lancien, G.},
      author={Randrianarivony, N.~L.},
       title={Equivalent norms with the property {$(\beta)$} of {R}olewicz},
        date={2017},
        ISSN={1578-7303},
     journal={Rev. R. Acad. Cienc. Exactas F\'{\i}s. Nat. Ser. A Mat. RACSAM},
      volume={111},
      number={1},
       pages={101\ndash 113},
         url={https://urldefense.com/v3/__https://doi.org/10.1007/s13398-016-0278-2__;!!Mih3wA!HpBzvK8JkgeDu6nhaw1ZmAjVSdFmp-kcDLsM8fjNEunNTwKyxP6gSKc1wloo8dEHCoSQiHGHCvvIeemVjA$ },
      review={\MR{3596040}},
}

\bib{DKR16}{article}{
      author={Dilworth, S.~J.},
      author={Kutzarova, D.},
      author={Randrianarivony, N.~L.},
       title={The transfer of property {$(\beta)$} of {R}olewicz by a uniform
  quotient map},
        date={2016},
        ISSN={0002-9947},
     journal={Trans. Amer. Math. Soc.},
      volume={368},
      number={9},
       pages={6253\ndash 6270},
         url={https://urldefense.com/v3/__https://doi.org/10.1090/tran/6553__;!!Mih3wA!HpBzvK8JkgeDu6nhaw1ZmAjVSdFmp-kcDLsM8fjNEunNTwKyxP6gSKc1wloo8dEHCoSQiHGHCvu3I99jnw$ },
      review={\MR{3461033}},
}

\bib{EMN}{article}{
      author={Eskenazis, A.},
      author={Mendel, M.},
      author={Naor, A.},
       title={Diamond convexity: a bifurcation in the {R}ibe program,
  forthcoming},
}

\bib{Enflo72}{article}{
      author={Enflo, P.},
       title={Banach spaces which can be given an equivalent uniformly convex
  norm},
        date={1972},
        ISSN={0021-2172},
     journal={Israel J. Math.},
      volume={13},
       pages={281\ndash 288 (1973)},
         url={https://urldefense.com/v3/__https://doi.org/10.1007/BF02762802__;!!Mih3wA!HpBzvK8JkgeDu6nhaw1ZmAjVSdFmp-kcDLsM8fjNEunNTwKyxP6gSKc1wloo8dEHCoSQiHGHCvulre0PJw$ },
      review={\MR{336297}},
}

\bib{Eskenazis_PhD}{book}{
      author={Eskenazis, A.},
       title={Geometric {I}nequalities and {A}dvances in the {R}ibe {P}rogram},
   publisher={ProQuest LLC, Ann Arbor, MI},
        date={2019},
        ISBN={978-1085-62771-9},
  url={https://urldefense.com/v3/__http://gateway.proquest.com/openurl?url_ver=Z39.88-2004&rft_val_fmt=info:ofi*fmt:kev:mtx:dissertation&res_dat=xri:pqm&rft_dat=xri:pqdiss:13878226__;Lw!!Mih3wA!HpBzvK8JkgeDu6nhaw1ZmAjVSdFmp-kcDLsM8fjNEunNTwKyxP6gSKc1wloo8dEHCoSQiHGHCvslfa7xPw$ },
        note={Thesis (Ph.D.)--Princeton University},
      review={\MR{4035226}},
}

\bib{Figiel76}{article}{
      author={Figiel, T.},
       title={On the moduli of convexity and smoothness},
        date={1976},
        ISSN={0039-3223},
     journal={Studia Math.},
      volume={56},
      number={2},
       pages={121\ndash 155},
         url={https://urldefense.com/v3/__https://doi.org/10.4064/sm-56-2-121-155__;!!Mih3wA!HpBzvK8JkgeDu6nhaw1ZmAjVSdFmp-kcDLsM8fjNEunNTwKyxP6gSKc1wloo8dEHCoSQiHGHCvudr9nYzA$ },
      review={\MR{425581}},
}

\bib{GartlandCarnot}{article}{
    AUTHOR = {Gartland, C.},
     TITLE = {Estimates on the {M}arkov convexity of {C}arnot groups and
              quantitative nonembeddability},
   JOURNAL = {J. Funct. Anal.},
    VOLUME = {279},
      YEAR = {2020},
    NUMBER = {8},
     PAGES = {108697, 47},
      ISSN = {0022-1236},
  review = {\MR{4124851}}
}

\bib{Gromov93}{incollection}{
      author={Gromov, M.},
       title={Asymptotic invariants of infinite groups},
        date={1993},
   booktitle={Geometric group theory, {V}ol. 2 ({S}ussex, 1991)},
      series={London Math. Soc. Lecture Note Ser.},
      volume={182},
   publisher={Cambridge Univ. Press, Cambridge},
       pages={1\ndash 295},
      review={\MR{1253544}},
}

\bib{James51}{article}{
      author={James, R.~C.},
       title={A non-reflexive {B}anach space isometric with its second
  conjugate space},
        date={1951},
        ISSN={0027-8424},
     journal={Proc. Nat. Acad. Sci. U.S.A.},
      volume={37},
       pages={174\ndash 177},
         url={https://urldefense.com/v3/__https://doi.org/10.1073/pnas.37.3.174__;!!Mih3wA!HpBzvK8JkgeDu6nhaw1ZmAjVSdFmp-kcDLsM8fjNEunNTwKyxP6gSKc1wloo8dEHCoSQiHGHCvthY6DqJg$ },
      review={\MR{44024}},
}

\bib{Kalton13}{article}{
      author={Kalton, N.~J.},
       title={Examples of uniformly homeomorphic {B}anach spaces},
        date={2013},
        ISSN={0021-2172},
     journal={Israel J. Math.},
      volume={194},
      number={1},
       pages={151\ndash 182},
         url={https://urldefense.com/v3/__https://doi.org/10.1007/s11856-012-0080-6__;!!Mih3wA!HpBzvK8JkgeDu6nhaw1ZmAjVSdFmp-kcDLsM8fjNEunNTwKyxP6gSKc1wloo8dEHCoSQiHGHCvsSbh_sxA$ },
      review={\MR{3047066}},
}

\bib{KOS99}{article}{
      author={Knaust, H.},
      author={Odell, E.},
      author={Schlumprecht, Th.},
       title={On asymptotic structure, the {S}zlenk index and {UKK} properties
  in {B}anach spaces},
        date={1999},
        ISSN={1385-1292},
     journal={Positivity},
      volume={3},
      number={2},
       pages={173\ndash 199},
         url={https://urldefense.com/v3/__https://doi.org/10.1023/A:1009786603119__;!!Mih3wA!HpBzvK8JkgeDu6nhaw1ZmAjVSdFmp-kcDLsM8fjNEunNTwKyxP6gSKc1wloo8dEHCoSQiHGHCvuMy-ytMg$ },
      review={\MR{1702641}},
}

\bib{Kutzarova90}{article}{
      author={Kutzarova, D.},
       title={An isomorphic characterization of property {$(\beta)$} of
  {R}olewicz},
        date={1990},
        ISSN={1123-2536},
     journal={Note Mat.},
      volume={10},
      number={2},
       pages={347\ndash 354},
      review={\MR{1204212}},
}

\bib{Kutzarova91}{article}{
      author={Kutzarova, D.},
       title={{$k$}-{$\beta$} and {$k$}-nearly uniformly convex {B}anach
  spaces},
        date={1991},
        ISSN={0022-247X},
     journal={J. Math. Anal. Appl.},
      volume={162},
      number={2},
       pages={322\ndash 338},
         url={https://urldefense.com/v3/__https://doi.org/10.1016/0022-247X(91)90153-Q__;!!Mih3wA!HpBzvK8JkgeDu6nhaw1ZmAjVSdFmp-kcDLsM8fjNEunNTwKyxP6gSKc1wloo8dEHCoSQiHGHCvttgAX86w$ },
      review={\MR{1137623}},
}

\bib{LafforgueNaor14}{article}{
    AUTHOR = {Lafforgue, V.}
    AUTHOR = {Naor, A.},
     TITLE = {Vertical versus horizontal {P}oincar\'{e} inequalities on the
              {H}eisenberg group},
   JOURNAL = {Israel J. Math.},
  FJOURNAL = {Israel Journal of Mathematics},
    VOLUME = {203},
      YEAR = {2014},
    NUMBER = {1},
     PAGES = {309--339},
      ISSN = {0021-2172,1565-8511},
   MRCLASS = {43A07 (20F65)},
  MRNUMBER = {3273443},
MRREVIEWER = {Stefan\ Schr\"{o}er},
       DOI = {10.1007/s11856-014-1088-x},
       URL = {https://urldefense.com/v3/__https://doi.org/10.1007/s11856-014-1088-x__;!!Mih3wA!BatRAj5d-TcrzNHBN0lho5J7JQ3ORJDhQtAG3AV9GHdxroedCId5_T02vMyNmVlh9UfXBX97kyFcVSms_w$ },
}

\bib{Li16}{article}{
      author={Li, S.},
       title={Markov convexity and nonembeddability of the {H}eisenberg group},
        date={2016},
        ISSN={0373-0956},
     journal={Ann. Inst. Fourier (Grenoble)},
      volume={66},
      number={4},
       pages={1615\ndash 1651},
         url={https://urldefense.com/v3/__http://aif.cedram.org/item?id=AIF_2016__66_4_1615_0__;!!Mih3wA!HpBzvK8JkgeDu6nhaw1ZmAjVSdFmp-kcDLsM8fjNEunNTwKyxP6gSKc1wloo8dEHCoSQiHGHCvvWhSlL-Q$ },
      review={\MR{3494180}},
}

\bib{LMN02}{article}{
      author={Linial, N.},
      author={Magen, A.},
      author={Naor, A.},
       title={Girth and {E}uclidean distortion},
        date={2002},
        ISSN={1016-443X},
     journal={Geom. Funct. Anal.},
      volume={12},
      number={2},
       pages={380\ndash 394},
         url={https://urldefense.com/v3/__https://doi.org/10.1007/s00039-002-8251-y__;!!Mih3wA!HpBzvK8JkgeDu6nhaw1ZmAjVSdFmp-kcDLsM8fjNEunNTwKyxP6gSKc1wloo8dEHCoSQiHGHCvuJ-AGLHg$ },
      review={\MR{1911665}},
}

\bib{LeeNaor06}{inproceedings}{
      author={Lee, J.~R.},
      author={Naor, A.},
       title={$l_p$ metrics on the {H}eisenberg group and the
  {G}oemans-{L}inial conjecture},
        date={2006},
   booktitle={47th {A}nnual {IEEE} {S}ymposium on {F}oundations of {C}omputer
  {S}cience},
      editor={Soc., IEEE~Comput.},
     address={Los Alamitos, CA,},
       pages={99\ndash 108},
}

\bib{LNOO18}{article}{
      author={Leung, S.~L.},
      author={Nelson, S.},
      author={Ostrovska, S.},
      author={Ostrovskii, M.~I.},
       title={Distortion of embeddings of binary trees into diamond graphs},
        date={2018},
        ISSN={0002-9939},
     journal={Proc. Amer. Math. Soc.},
      volume={146},
      number={2},
       pages={695\ndash 704},
         url={https://urldefense.com/v3/__https://doi.org/10.1090/proc/13750__;!!Mih3wA!HpBzvK8JkgeDu6nhaw1ZmAjVSdFmp-kcDLsM8fjNEunNTwKyxP6gSKc1wloo8dEHCoSQiHGHCvshjlO9eQ$ },
      review={\MR{3731702}},
}

\bib{LNP06}{inproceedings}{
      author={Lee, J.~R.},
      author={Naor, A.},
      author={Peres, Y.},
       title={Trees and {M}arkov convexity},
        date={2006},
   booktitle={Proceedings of the {S}eventeenth {A}nnual {ACM}-{SIAM}
  {S}ymposium on {D}iscrete {A}lgorithms},
   publisher={ACM, New York},
       pages={1028\ndash 1037},
         url={https://urldefense.com/v3/__https://doi.org/10.1145/1109557.1109671__;!!Mih3wA!HpBzvK8JkgeDu6nhaw1ZmAjVSdFmp-kcDLsM8fjNEunNTwKyxP6gSKc1wloo8dEHCoSQiHGHCvsWc2WLyw$ },
      review={\MR{2373829}},
}

\bib{LNP09}{article}{
      author={Lee, J.~R.},
      author={Naor, A.},
      author={Peres, Y.},
       title={Trees and {M}arkov convexity},
        date={2009},
        ISSN={1016-443X},
     journal={Geom. Funct. Anal.},
      volume={18},
      number={5},
       pages={1609\ndash 1659},
         url={https://urldefense.com/v3/__https://doi.org/10.1007/s00039-008-0689-0__;!!Mih3wA!HpBzvK8JkgeDu6nhaw1ZmAjVSdFmp-kcDLsM8fjNEunNTwKyxP6gSKc1wloo8dEHCoSQiHGHCvtfYDtYgw$ },
      review={\MR{2481738}},
}

\bib{LebedevaPetrunin10}{article}{
      author={Lebedeva, N.},
      author={Petrunin, A.},
       title={Curvature bounded below: a definition a la {B}erg-{N}ikolaev},
        date={2010},
     journal={Electron. Res. Announc. Math. Sci.},
      volume={17},
       pages={122\ndash 124},
         url={https://urldefense.com/v3/__https://doi.org/10.3934/era.2010.17.122__;!!Mih3wA!HpBzvK8JkgeDu6nhaw1ZmAjVSdFmp-kcDLsM8fjNEunNTwKyxP6gSKc1wloo8dEHCoSQiHGHCvucTJYoig$ },
      review={\MR{2735031}},
}

\bib{LimaLova12}{article}{
      author={Lima, V.},
      author={Randrianarivony, N.~L.},
       title={Property {$(\beta)$} and uniform quotient maps},
        date={2012},
        ISSN={0021-2172},
     journal={Israel J. Math.},
      volume={192},
      number={1},
       pages={311\ndash 323},
         url={https://urldefense.com/v3/__https://doi.org/10.1007/s11856-012-0025-0__;!!Mih3wA!HpBzvK8JkgeDu6nhaw1ZmAjVSdFmp-kcDLsM8fjNEunNTwKyxP6gSKc1wloo8dEHCoSQiHGHCvtBzjynJw$ },
      review={\MR{3004085}},
}

\bib{Matousek99}{article}{
      author={Matou\v{s}ek, J.},
       title={On embedding trees into uniformly convex {B}anach spaces},
        date={1999},
        ISSN={0021-2172},
     journal={Israel J. Math.},
      volume={114},
       pages={221\ndash 237},
         url={https://urldefense.com/v3/__https://doi.org/10.1007/BF02785579__;!!Mih3wA!HpBzvK8JkgeDu6nhaw1ZmAjVSdFmp-kcDLsM8fjNEunNTwKyxP6gSKc1wloo8dEHCoSQiHGHCvsYoPkQ4g$ },
      review={\MR{1738681}},
}

\bib{Mendel09}{incollection}{
      author={Mendel, M.},
       title={Metric dichotomies},
        date={2009},
   booktitle={Limits of graphs in group theory and computer science},
   publisher={EPFL Press, Lausanne},
       pages={59\ndash 76},
      review={\MR{2562140}},
}

\bib{MN08}{article}{
      author={Mendel, M.},
      author={Naor, A.},
       title={Metric cotype},
        date={2008},
        ISSN={0003-486X},
     journal={Ann. of Math. (2)},
      volume={168},
      number={1},
       pages={247\ndash 298},
         url={https://urldefense.com/v3/__https://doi.org/10.4007/annals.2008.168.247__;!!Mih3wA!HpBzvK8JkgeDu6nhaw1ZmAjVSdFmp-kcDLsM8fjNEunNTwKyxP6gSKc1wloo8dEHCoSQiHGHCvs9t7nJ9A$ },
      review={\MR{2415403}},
}

\bib{MendelNaor13}{article}{
      author={Mendel, M.},
      author={Naor, A.},
       title={Markov convexity and local rigidity of distorted metrics},
        date={2013},
        ISSN={1435-9855},
     journal={J. Eur. Math. Soc. (JEMS)},
      volume={15},
      number={1},
       pages={287\ndash 337},
         url={https://urldefense.com/v3/__https://doi.org/10.4171/JEMS/362__;!!Mih3wA!HpBzvK8JkgeDu6nhaw1ZmAjVSdFmp-kcDLsM8fjNEunNTwKyxP6gSKc1wloo8dEHCoSQiHGHCvuRcVhlGQ$ },
      review={\MR{2998836}},
}

\bib{MaureyPisier76}{article}{
      author={Maurey, B.},
      author={Pisier, G.},
       title={S\'{e}ries de variables al\'{e}atoires vectorielles
  ind\'{e}pendantes et propri\'{e}t\'{e}s g\'{e}om\'{e}triques des espaces de
  {B}anach},
        date={1976},
        ISSN={0039-3223},
     journal={Studia Math.},
      volume={58},
      number={1},
       pages={45\ndash 90},
         url={https://urldefense.com/v3/__https://doi.org/10.4064/sm-58-1-45-90__;!!Mih3wA!HpBzvK8JkgeDu6nhaw1ZmAjVSdFmp-kcDLsM8fjNEunNTwKyxP6gSKc1wloo8dEHCoSQiHGHCvty2aChhg$ },
      review={\MR{443015}},
}

\bib{MagnaniRajala14}{article}{
      author={Magnani, V.},
      author={Rajala, T.},
       title={Radon-{N}ikodym property and area formula for {B}anach
  homogeneous group targets},
        date={2014},
        ISSN={1073-7928},
     journal={Int. Math. Res. Not. IMRN},
      number={23},
       pages={6399\ndash 6430},
         url={https://urldefense.com/v3/__https://doi.org/10.1093/imrn/rnt171__;!!Mih3wA!HpBzvK8JkgeDu6nhaw1ZmAjVSdFmp-kcDLsM8fjNEunNTwKyxP6gSKc1wloo8dEHCoSQiHGHCvs_OoKXGA$ },
      review={\MR{3286342}},
}

\bib{Naor12}{article}{
      author={Naor, A.},
       title={An introduction to the {R}ibe program},
        date={2012},
        ISSN={0289-2316},
     journal={Jpn. J. Math.},
      volume={7},
      number={2},
       pages={167\ndash 233},
         url={https://urldefense.com/v3/__http://dx.doi.org/10.1007/s11537-012-1222-7__;!!Mih3wA!HpBzvK8JkgeDu6nhaw1ZmAjVSdFmp-kcDLsM8fjNEunNTwKyxP6gSKc1wloo8dEHCoSQiHGHCvsHZutXNg$ },
      review={\MR{2995229}},
}

\bib{OdellSchlumprecht_RACSAM}{article}{
      author={Odell, E.~W.},
      author={Schlumprecht, Th.},
       title={Embedding into {B}anach spaces with finite dimensional
  decompositions},
        date={2006},
        ISSN={1578-7303},
     journal={RACSAM. Rev. R. Acad. Cienc. Exactas F\'{\i}s. Nat. Ser. A Mat.},
      volume={100},
      number={1-2},
       pages={295\ndash 323},
      review={\MR{2267413}},
}

\bib{Ostrovskii14}{article}{
      author={Ostrovskii, M.~I.},
       title={Metric characterizations of superreflexivity in terms of word
  hyperbolic groups and finite graphs},
        date={2014},
     journal={Anal. Geom. Metr. Spaces},
      volume={2},
      number={1},
       pages={154\ndash 168},
         url={https://urldefense.com/v3/__https://doi.org/10.2478/agms-2014-0005__;!!Mih3wA!HpBzvK8JkgeDu6nhaw1ZmAjVSdFmp-kcDLsM8fjNEunNTwKyxP6gSKc1wloo8dEHCoSQiHGHCvvQHV36gw$ },
      review={\MR{3210894}},
}

\bib{Ostrovskii14b}{article}{
      author={Ostrovskii, M.~I.},
       title={On metric characterizations of the {R}adon-{N}ikod\'{y}m and
  related properties of {B}anach spaces},
        date={2014},
        ISSN={1793-5253},
     journal={J. Topol. Anal.},
      volume={6},
      number={3},
       pages={441\ndash 464},
         url={https://urldefense.com/v3/__https://doi.org/10.1142/S1793525314500186__;!!Mih3wA!HpBzvK8JkgeDu6nhaw1ZmAjVSdFmp-kcDLsM8fjNEunNTwKyxP6gSKc1wloo8dEHCoSQiHGHCvsdrnWTrg$ },
      review={\MR{3217866}},
}

\bib{Pansu89}{article}{
      author={Pansu, P.},
       title={M\'{e}triques de {C}arnot-{C}arath\'{e}odory et
  quasiisom\'{e}tries des espaces sym\'{e}triques de rang un},
        date={1989},
        ISSN={0003-486X},
     journal={Ann. of Math. (2)},
      volume={129},
      number={1},
       pages={1\ndash 60},
         url={https://urldefense.com/v3/__https://doi.org/10.2307/1971484__;!!Mih3wA!HpBzvK8JkgeDu6nhaw1ZmAjVSdFmp-kcDLsM8fjNEunNTwKyxP6gSKc1wloo8dEHCoSQiHGHCvvEm9jI_w$ },
      review={\MR{979599}},
}

\bib{Perreau20}{article}{
      author={Perreau, Y.},
       title={On the embeddability of the family of countably branching trees
  into quasi-reflexive {B}anach spaces},
        date={2020},
        ISSN={0022-1236},
     journal={J. Funct. Anal.},
      volume={278},
      number={12},
       pages={108470, 17},
         url={https://urldefense.com/v3/__https://doi.org/10.1016/j.jfa.2020.108470__;!!Mih3wA!HpBzvK8JkgeDu6nhaw1ZmAjVSdFmp-kcDLsM8fjNEunNTwKyxP6gSKc1wloo8dEHCoSQiHGHCvvghTo6cA$ },
      review={\MR{4078527}},
}

\bib{Pisier74}{article}{
    AUTHOR = {Pisier, G.},
     TITLE = {Sur les espaces qui ne contiennent pas de {$l\sp{1}\sb{n}$}
              uniform\'{e}ment},
 BOOKTITLE = {S\'{e}minaire {M}aurey-{S}chwartz 1973--1974: {E}spaces
              {$L^p$}, applications radonifiantes et g\'{e}om\'{e}trie des
              espaces de {B}anach},
     PAGES = {Exp. No. 7, 19 pp. (errata, p. E.1)},
 PUBLISHER = {\'{E}cole Polytech., Paris},
      YEAR = {1974},
   MRCLASS = {46B05},
  MRNUMBER = {394129},
MRREVIEWER = {J.\ F.\ Smith},
}

\bib{Pisier75}{article}{
      author={Pisier, G.},
       title={Martingales with values in uniformly convex spaces},
        date={1975},
        ISSN={0021-2172},
     journal={Israel J. Math.},
      volume={20},
      number={3-4},
       pages={326\ndash 350},
         url={https://urldefense.com/v3/__https://doi.org/10.1007/BF02760337__;!!Mih3wA!HpBzvK8JkgeDu6nhaw1ZmAjVSdFmp-kcDLsM8fjNEunNTwKyxP6gSKc1wloo8dEHCoSQiHGHCvuiTnleUg$ },
      review={\MR{394135}},
}

\bib{Ribe76}{article}{
      author={Ribe, M.},
       title={On uniformly homeomorphic normed spaces},
        date={1976},
        ISSN={0004-2080},
     journal={Ark. Mat.},
      volume={14},
      number={2},
       pages={237\ndash 244},
         url={https://urldefense.com/v3/__https://doi.org/10.1007/BF02385837__;!!Mih3wA!HpBzvK8JkgeDu6nhaw1ZmAjVSdFmp-kcDLsM8fjNEunNTwKyxP6gSKc1wloo8dEHCoSQiHGHCvs0PV0GiQ$ },
      review={\MR{440340}},
}

\bib{Rolewicz87}{article}{
      author={Rolewicz, S.},
       title={On {$\Delta$}-uniform convexity and drop property},
        date={1987},
        ISSN={0039-3223},
     journal={Studia Math.},
      volume={87},
      number={2},
       pages={181\ndash 191},
         url={https://urldefense.com/v3/__https://doi.org/10.4064/sm-87-2-181-191__;!!Mih3wA!HpBzvK8JkgeDu6nhaw1ZmAjVSdFmp-kcDLsM8fjNEunNTwKyxP6gSKc1wloo8dEHCoSQiHGHCvtjwstp6w$ },
      review={\MR{928575}},
}

\bib{Sarkar12}{incollection}{
      author={Sarkar, R.},
       title={Low distortion {D}elaunay embedding of trees in hyperbolic
  plane},
        date={2012},
   booktitle={Graph drawing},
      series={Lecture Notes in Comput. Sci.},
      volume={7034},
   publisher={Springer, Heidelberg},
       pages={355\ndash 366},
         url={https://urldefense.com/v3/__https://doi.org/10.1007/978-3-642-25878-7_34__;!!Mih3wA!HpBzvK8JkgeDu6nhaw1ZmAjVSdFmp-kcDLsM8fjNEunNTwKyxP6gSKc1wloo8dEHCoSQiHGHCvu_QeaFDA$ },
      review={\MR{2928298}},
}

\bib{Semmes96}{article}{
      author={Semmes, S.},
       title={On the nonexistence of bi-{L}ipschitz parameterizations and
  geometric problems about {$A_\infty$}-weights},
        date={1996},
        ISSN={0213-2230},
     journal={Rev. Mat. Iberoamericana},
      volume={12},
      number={2},
       pages={337\ndash 410},
         url={https://urldefense.com/v3/__https://doi.org/10.4171/RMI/201__;!!Mih3wA!HpBzvK8JkgeDu6nhaw1ZmAjVSdFmp-kcDLsM8fjNEunNTwKyxP6gSKc1wloo8dEHCoSQiHGHCvtENhdGJQ$ },
      review={\MR{1402671}},
}

\bib{Talagrand92}{article}{
    AUTHOR = {Talagrand, M.},
     TITLE = {Type, infratype and the {E}lton-{P}ajor theorem},
   JOURNAL = {Invent. Math.},
  FJOURNAL = {Inventiones Mathematicae},
    VOLUME = {107},
      YEAR = {1992},
    NUMBER = {1},
     PAGES = {41--59},
      ISSN = {0020-9910,1432-1297},
   MRCLASS = {46B20 (46B09 47N99)},
  MRNUMBER = {1135463},
MRREVIEWER = {G.\ J. O. Jameson},
       DOI = {10.1007/BF01231880},
       URL = {https://urldefense.com/v3/__https://doi.org/10.1007/BF01231880__;!!Mih3wA!HpBzvK8JkgeDu6nhaw1ZmAjVSdFmp-kcDLsM8fjNEunNTwKyxP6gSKc1wloo8dEHCoSQiHGHCvuAVryVfg$ },
}

\bib{Talagrand02}{article}{
    AUTHOR = {Talagrand, M.},
     TITLE = {Type and infratype in symmetric sequence spaces},
   JOURNAL = {Israel J. Math.},
  FJOURNAL = {Israel Journal of Mathematics},
    VOLUME = {143},
      YEAR = {2004},
     PAGES = {157--180},
      ISSN = {0021-2172,1565-8511},
   MRCLASS = {46B20 (46B45)},
  MRNUMBER = {2106981},
MRREVIEWER = {Oscar\ Blasco},
       DOI = {10.1007/BF02803497},
       URL = {https://urldefense.com/v3/__https://doi.org/10.1007/BF02803497__;!!Mih3wA!HpBzvK8JkgeDu6nhaw1ZmAjVSdFmp-kcDLsM8fjNEunNTwKyxP6gSKc1wloo8dEHCoSQiHGHCvu-lXyNnA$ },
}

\bib{Tessera08}{article}{
      author={Tessera, R.},
       title={{Quantitative property A, Poincar{\'e} inequalities, L p
  -compression and L p -distortion for metric measure spaces}},
        date={2008},
        ISSN={0046-5755},
     journal={Geometriae Dedicata},
      volume={136},
      number={1},
       pages={203 220},
}

\bib{Zhang15}{article}{
      author={Zhang, S.},
       title={Coarse quotient mappings between metric spaces},
        date={2015},
        ISSN={0021-2172},
     journal={Israel J. Math.},
      volume={207},
      number={2},
       pages={961\ndash 979},
         url={https://urldefense.com/v3/__https://doi.org/10.1007/s11856-015-1168-6__;!!Mih3wA!HpBzvK8JkgeDu6nhaw1ZmAjVSdFmp-kcDLsM8fjNEunNTwKyxP6gSKc1wloo8dEHCoSQiHGHCvtbUEk_RQ$ },
      review={\MR{3359724}},
}

\bib{Zhang22}{article}{
    AUTHOR = {Zhang, S.},
     TITLE = {A submetric characterization of {R}olewicz's property {$(\beta
              )$}},
   JOURNAL = {Studia Math.},
  FJOURNAL = {Studia Mathematica},
    VOLUME = {265},
      YEAR = {2022},
    NUMBER = {3},
     PAGES = {303--314},
      ISSN = {0039-3223,1730-6337},
   MRCLASS = {46B20 (46B80 46B85)},
  MRNUMBER = {4435863},
       DOI = {10.4064/sm210319-9-10},
       URL = {https://urldefense.com/v3/__https://doi.org/10.4064/sm210319-9-10__;!!Mih3wA!HpBzvK8JkgeDu6nhaw1ZmAjVSdFmp-kcDLsM8fjNEunNTwKyxP6gSKc1wloo8dEHCoSQiHGHCvvdlOYfhw$ },
}

\end{biblist}
\end{bibdiv}

\end{document}